\theoremstyle{definition}
\newtheorem{thm}{Theorem}[section]
\newtheorem{defn}[thm]{Definition}
\newtheorem{defn-prop}[thm]{Definition-Proposition}
\newtheorem{rem}[thm]{Remark}
\newtheorem{lem}[thm]{Lemma}
\newtheorem{exmp}[thm]{Example}
\newtheorem{prop}[thm]{Proposition}
\newtheorem{notat}[thm]{Notation}
\newtheorem{conj}[thm]{Conjecture}
\DeclareMathOperator{\grad}{grad}
\DeclareMathOperator{\U}{\text{U}}
\newcommand{\red}{\text{red}}
\newcommand{\HMR}{\textit{HMR}}
\newcommand{\HM}{\textit{HM}}
\newcommand{\HF}{\textit{HF}}
\newcommand{\Kh}{\textit{Kh}}
\newcommand{\CKh}{\textit{CKh}}
\newcommand{\Khr}{\textit{Khr}}
\newcommand{\CKhr}{\textit{CKhr}}
\renewcommand{\Im}{\text{Im}}
\DeclareMathOperator{\fraka}{\mathfrak{a}}
\DeclareMathOperator{\frakb}{\mathfrak{b}}
\DeclareMathOperator{\frakc}{\mathfrak{c}}
\DeclareMathOperator{\frakd}{\mathfrak{d}}
\DeclareMathOperator{\fraks}{\mathfrak{s}}
\DeclareMathOperator{\frakt}{\mathfrak{t}}
\newcommand{\calB}{\mathcal{B}}
\newcommand{\calG}{\mathcal{G}}
\newcommand{\calL}{\mathcal{L}}
\newcommand{\bbZ}{\mathbb{Z}}
\newcommand{\bbB}{\mathbb{B}}
\newcommand{\bbN}{\mathbb{N}}
\newcommand{\bbF}{\mathbb{F}}
\newcommand{\bbC}{\mathbb{C}}
\newcommand{\bbT}{\mathbb{T}}
\newcommand{\bbY}{\mathbb{Y}}
\newcommand{\bbR}{\mathbb{R}}
\newcommand{\bfC}{\mathbf{C}}
\newcommand{\bfF}{\mathbf{F}}
\newcommand{\bfzero}{\mathbf{0}}
\newcommand{\bfone}{\mathbf{1}}
\newcommand{\bftwo}{\mathbf{2}}
\newcommand{\bbW}{\mathbb{W}}
\newcommand{\reals}{\mathbb R}
\newcommand{\del}{\ensuremath{\partial}}
\newcommand{\delbar}{\ensuremath{{\bar{\partial}}}}
\newcommand{\spinc}{{\text{spin\textsuperscript{c} }}}
\newcommand{\qgrade}{{\textbf{q}}}
\newcommand{\pgrade}{{\textbf{p}}}
\newcommand{\hgrade}{{\textbf{h}}}
\newcommand{\dbcv}{\mathsf{\Sigma}}
\newcommand{\Nbhd}{\ensuremath{{\sf{Nbhd}}}}
\newcommand{\gr}{\textsf{gr}}
\newcommand{\Fltr}{\textsf{Fltr}}
\newcommand{\la}{\ensuremath{{\langle}}}
\newcommand{\ra}{\ensuremath{{\rangle}}}
\DeclareRobustCommand\widecheck[1]{{\mathpalette\@widecheck{#1}}}
\def\@widecheck#1#2{%
    \setbox\z@\hbox{\m@th$#1#2$}%
    \setbox\tw@\hbox{\m@th$#1%
       \widehat{%
          \vrule\@width\z@\@height\ht\z@
          \vrule\@height\z@\@width\wd\z@}$}%
    \dp\tw@-\ht\z@
    \@tempdima\ht\z@ \advance\@tempdima2\ht\tw@ \divide\@tempdima\thr@@
    \setbox\tw@\hbox{%
       \raise\@tempdima\hbox{\scalebox{1}[-1]{\lower\@tempdima\box
\tw@}}}%
    {\ooalign{\box\tw@ \cr \box\z@}}}
\newcommand{\Addresses}{{
  \bigskip
  \footnotesize

\textsc{Department of Mathematics, 
Harvard University, 
Cambridge MA, 
United States 02138}\par\nopagebreak
  \textit{E-mail address:} \texttt{jiakaili@math.harvard.edu}
}}
\newcommand\blfootnote[1]{%
	\begingroup
	\renewcommand\thefootnote{}\footnote{#1}%
	\addtocounter{footnote}{-1}%
	\endgroup
}
\title{Real monopoles and a spectral sequence from Khovanov homology}
\begin{document}    
\author{Jiakai Li}
\maketitle
\begin{abstract}
Given a based link $(K,p)$, we define a ``tilde''-version $\widetilde{\HMR}(K,p)$ of real monopole Floer homology and prove an unoriented skein exact triangle.
We show the Euler characteristic of $\widetilde{\HMR}(K,p)$ is equal to Miyazawa's~\cite{Miyazawa2023} invariant $|\deg(K)|$ and examine some examples.
Further, we construct a spectral sequence over $\bbF_2$ abutting to $\widetilde{\HMR}(K,p)$, whose $E_2$ page is the reduced Khovanov homology $\Khr(\overline{K})$ of the mirror link $\overline{K}$.
\end{abstract}
\section{Introduction}
\subsection{Structural results}
Given a link $K$ in $S^3$, the author~\cite{ljk2022} defined three \emph{real monopole Floer homology groups}
$\widehat{\HMR}_{\bullet}(K)$,
	$\widecheck{\HMR}_{\bullet}(K)$,
	$\overline{\HMR}_{\bullet}(K)$.
We write  $\HMR^{\circ}_{\bullet}(K)$ where $\circ \in \{\wedge,\vee,-\}$ is a placeholder for the flavour.
These groups are constructed as analogues to Kronheimer-Mrowka's~\cite{KMbook2007} monopole Floer homology $\HM^{\circ}_{\bullet}(Y)$ of a $3$-manifold $Y$.
The counterpart of the $(-2)$-degree monopole Floer ``$U_{\dag}$-map'' in $\HMR^{\circ}_{\bullet}(K)$ is a $(-1)$-degree map
\[
	\upsilon_p:\HMR^{\circ}_{\bullet}(K) \to \HMR^{\circ}_{\bullet}(K),
\]
where $p \in K$.
The operator $\upsilon_p$ depends only on the component of $K$ on which $p$ lies.
\blfootnote{This work was partially supported by a Simons Foundation Award \#994330 (Simons Collaboration on New Structures in Low-Dimensional Topology).}

The first goal of this paper is to introduce the following ``tilde''-version of $\HMR$ of a based link $(K,p)$ as the mapping cone of $\upsilon_p$:
\[
\widetilde{\HMR}_{\bullet}(K,p),
\]
which is a finite $\bbF_2$-vector space.
This algebraic construction is due to Bloom~\cite{Bloom2011} in the case of $\widetilde{\HM}_{\bullet}(Y)$, in analogy with $\widehat{\textit{HF}}(Y)$ in Heegaard Floer homology~\cite{OzSz2004}.

One should compare $\HMR_{\bullet}^{\circ}(K)$ with a version (Heegaard, monopole, instanton etc.) of Floer homology of the double branched cover $\dbcv_2(S^3,K)$.
Such a functor is manifestly a link invariant, and has been studied extensively in the literature.
Notably, several spectral sequences from Khovanov homology to a Floer homology of double branched covers were discovered, since the work of Osv\'ath and Szab\'o~\cite{OzSz2005HFdc} in the Heegaard Floer setting.
The most relevant result for us is Bloom's spectral sequence~\cite{Bloom2011} from the Khovanov homology $\Kh(K)$ to $\widetilde{\HM}_{\bullet}(\dbcv_2(S^3,\overline{K}))$.
By adapting Bloom's arguments, we prove the following.
\begin{thm}
	\label{thm:SS_intro}
There is a spectral sequence $\{(E_i(K,p),d_i)\}_{i\ge 1}$ abutting to $\widetilde{\HMR}(K,p)$, whose $E_2$-page is the reduced Khovanov homology $\Khr(\overline{K})$ of the mirror $\overline{K}$ over $\bbF_2$.
Each page $E_i(K,p)$ is an invariant of the based link $(K,p)$.
\end{thm}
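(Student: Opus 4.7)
The plan is to mirror Bloom's argument in the real monopole setting, using the unoriented skein exact triangle for $\widetilde{\HMR}$ (established earlier in the paper) as the fundamental input. Fix a diagram $D$ of $K$ with $n$ crossings; resolving each crossing in the $0$- or $1$-way yields $2^n$ total resolutions $K_v$ indexed by vertices $v \in \{0,1\}^n$, each an unlink. I would first compute $\widetilde{\HMR}(K_v,p)$ for each unlink: as in Bloom, one expects this to be a tensor product of $\bbF_2[x]/(x^2)$ factors, one for each link component, with the component containing the basepoint $p$ contributing a reduced (rank one) factor. This identifies the vector space at each cube vertex with the reduced Khovanov generators of the corresponding resolution of $\overline{K}$.

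Next, I would lift the unoriented skein exact triangle to the chain level, following Bloom's link surgery chain-level package. Concretely, for each edge of the cube one gets a chain map $f_v$ between resolution complexes, and the triangle asserts that the mapping cone recovers the tilde-complex for the third resolution. The core technical step is to promote these edge maps to a coherent hypercube of chain maps with higher homotopies $f_{v_1 \le v_2 \le \dots \le v_k}$ filling in every $k$-dimensional subcube, so that the total complex of the hypercube is chain homotopy equivalent to $\widetilde{CMR}(K,p)$. This would be done by induction on cube dimension, using the contractibility of the relevant moduli of perturbation data for real Seiberg-Witten equations on the branched cover $\dbcv_2(S^3,K)$ and appealing to composition/homotopy properties of the cobordism-induced maps.

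Filtering the total complex by the cube coordinate $|v| = \sum v_i$ produces the desired spectral sequence. The $E_1$ page is by construction the direct sum of the resolution complexes with the edge-map differential, and the identification in the first paragraph matches this with the reduced Khovanov chain complex $\widetilde{CKhr}(\overline{K})$; here the mirror appears because the convention relating a resolution of $K$ to the Khovanov cube of $\overline{K}$ flips the roles of $0$- and $1$-resolutions. Taking homology gives $E_2 = \Khr(\overline{K})$. Abutment to $\widetilde{\HMR}(K,p)$ follows from the usual convergence of a bounded filtration on a finite-dimensional complex.

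Finally, invariance of every page $E_i$ for $i \ge 1$ as an invariant of $(K,p)$ reduces to checking that the Reidemeister moves are realized by filtered chain homotopy equivalences of the hypercube, which in turn follows from the naturality of the cobordism maps in $\widetilde{\HMR}$. I expect the main obstacle to be the chain-level hypercube construction in step two: ensuring that all higher homotopies can be chosen coherently in the $\bbZ/2$-equivariant Seiberg-Witten setting, where transversality and orientation issues for real moduli spaces require care beyond Bloom's original monopole argument. Once that cube is in place, the spectral sequence and its $E_2$ identification should follow by essentially the same homological-algebra formalism as in~\cite{Bloom2011}.
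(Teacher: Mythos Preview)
Your outline matches the paper's strategy: build a hypercube complex over the cube of resolutions, filter by vertex weight, identify $E_1$ with the reduced Khovanov chain complex, and show the total complex is quasi-isomorphic to $\widetilde{C}(K,p)$ by iterating the skein triangle. Two points deserve sharpening.

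First, the higher homotopies $f_{v,u}$ for $|v-u|_1 > 1$ are not produced by an abstract contractibility-of-perturbation-data argument. The paper constructs explicit families of orbifold metrics on the skein cobordisms, parametrized by polytopes $P_{v,u}$ glued from cubes; each cube corresponds to stretching necks along a maximal chain of hypersurfaces $Y_w$ sitting inside the cobordism, and the codimension-one faces of $P_{v,u}$ are broken metrics. Counting boundary points of one-dimensional parametrized moduli spaces over these polytopes gives $\bfF^2=0$ directly. Proving that the total complex computes $\widetilde{\HMR}(K,p)$ then requires enlarging to a $\{0,1,2\}^N$-cube with additional hypersurfaces $S_k$ around the M\"obius bands; their boundary contributions cancel mod $2$ by a conjugation symmetry on $\overline{\mathbb{CP}}^2$. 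Since everything is over $\bbF_2$, no orientation issues arise.

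Second, matching the $E_1$-differential to the Khovanov differential is not automatic from the vertex identification: one must compute the pair-of-pants cobordism maps on $\widetilde{\HMR}$ of unlinks. The paper does this by exploiting the $\Lambda^* H_1(\dbcv_2(S^3,K_v);\bbF_2)$-module structure together with exactness of the skein triangle to pin down the merge and split maps. Finally, a small overclaim: the $E_1$-page is the reduced Khovanov \emph{chain complex} of the chosen diagram and is not an invariant of $(K,p)$; invariance begins at $E_2$.
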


Along the way, we prove an exact triangle for a triple of links  satisfying the following local \emph{unoriented skein relation}, away from a point $p$:
\begin{figure}[tbh]
	\centering
\tikzset{every picture/.style={line width=0.75pt}} 
\begin{tikzpicture}[x=0.28pt,y=0.28pt,yscale=-1,xscale=1]
	\draw   (11.33,102.56) .. controls (11.33,52.17) and (52.57,11.33) .. (103.44,11.33) .. controls (154.32,11.33) and (195.56,52.17) .. (195.56,102.56) .. controls (195.56,152.94) and (154.32,193.78) .. (103.44,193.78) .. controls (52.57,193.78) and (11.33,152.94) .. (11.33,102.56) -- cycle ;
	\draw   (235.33,101.89) .. controls (235.33,51.51) and (276.57,10.67) .. (327.44,10.67) .. controls (378.32,10.67) and (419.56,51.51) .. (419.56,101.89) .. controls (419.56,152.27) and (378.32,193.11) .. (327.44,193.11) .. controls (276.57,193.11) and (235.33,152.27) .. (235.33,101.89) -- cycle ;
	\draw   (465.33,102.56) .. controls (465.33,52.17) and (506.57,11.33) .. (557.44,11.33) .. controls (608.32,11.33) and (649.56,52.17) .. (649.56,102.56) .. controls (649.56,152.94) and (608.32,193.78) .. (557.44,193.78) .. controls (506.57,193.78) and (465.33,152.94) .. (465.33,102.56) -- cycle ;
	\draw    (39.33,37.67) -- (166.89,167.11) ;
	\draw    (94.89,105.78) -- (40.22,167.11) ;
	\draw    (162.22,33.11) -- (107.56,94.44) ;
	\draw    (272.22,30.44) .. controls (328.22,71.11) and (287.56,177.11) .. (268.22,172.44) ;
	\draw    (375.56,24.44) .. controls (320.89,75.11) and (362.22,156.44) .. (387.56,171.78) ;
	\draw    (482,52) .. controls (518.22,93.11) and (597.56,94.44) .. (628.89,45.78) ;
	\draw    (484.22,157.78) .. controls (526.44,119.56) and (568.22,117.78) .. (632.89,154.44) ;
	
	\draw (87,202.07) node [anchor=north west][inner sep=0.75pt]    {$K_{2}$};
	\draw (305.33,202.07) node [anchor=north west][inner sep=0.75pt]    {$K_{1}$};
	\draw (540.33,202.07) node [anchor=north west][inner sep=0.75pt]    {$K_{0}$};
\end{tikzpicture}.
\caption{Three links satisfying an unoriented skein relation.}
\label{fig:triangle210_intro}
\end{figure}
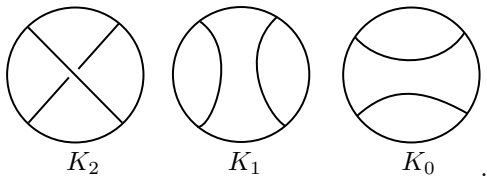
\begin{thm}
	\label{thm:triangle_intro}
Let $(K_0,K_1,K_2)$ be a triple of links satisfying the unoriented skein relation. Then there is an exact triangle
\begin{equation}
\label{eqn:triangle}
	\begin{tikzcd}
	{\widetilde{\HMR}_{\bullet}(K_2,p)} && {\widetilde{\HMR}_{\bullet}(K_1,p)} \\
	& {\widetilde{\HMR}_{\bullet}(K_0,p)}
	\arrow[from=1-1, to=1-3]
	\arrow[from=1-3, to=2-2]
	\arrow[from=2-2, to=1-1]
\end{tikzcd}.
\end{equation}
\end{thm}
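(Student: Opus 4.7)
The plan is to adapt the surgery exact triangle paradigm of Ozsv\'ath-Szab\'o and Kronheimer-Mrowka-Ozsv\'ath-Szab\'o, following Bloom's monopole incarnation for $\widetilde{\HM}$~\cite{Bloom2011}, to the real monopole setting. The three links in Figure~\ref{fig:triangle210_intro} differ only inside a small ball $B^{3}\subset S^{3}$ disjoint from $p$, and each pair of resolutions is joined by an elementary saddle (pair-of-pants) cobordism in $S^{3}\times I$ supported in $B^{3}\times I$. These carry natural real structures, and counting real monopoles on the associated real cobordisms defines chain maps $f_{21},f_{10},f_{02}$ between the three complexes computing each flavor of $\HMR$. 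Since $p$ lies outside the skein ball, $\upsilon_{p}$ commutes up to chain homotopy with these cobordism maps, so they descend to the mapping cone and produce the three arrows of~(\ref{eqn:triangle}).

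The next step is to show that each composition of two consecutive maps is chain-homotopic to zero. Stacking two consecutive saddles inside $B^{3}\times I$ yields a composite surface that, after isotopy, contains an unknotted, unlinked sphere; under the associated real structure this permits a $1$-parameter family of metrics degenerating along a connect-sum neck. Counting parameterized real monopoles over this family produces the desired null-homotopy, paralleling Bloom's argument. Exactness of the triangle is then obtained from a $2$-parameter family of real metrics on the triple composition cobordism: the codimension-one boundary strata of this family reproduce the null-homotopies just constructed, while a neck-stretching argument in the interior identifies the total count with the identity up to chain homotopy. This is the real-monopole analogue of the triangle detection lemma, and algebraically upgrades the three arrows to an exact triangle.

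The principal obstacle is analytical rather than algebraic: one must ensure that the parameterized real monopole moduli spaces over these families of real metrics and perturbations enjoy the compactness, transversality, and reducible-free properties needed to define the relevant counts. Specifically, the gluing and degeneration analysis developed in~\cite{ljk2022} for the closed case must extend uniformly across $1$- and $2$-parameter families, and the abundance of real reducibles (from the fixed locus of the involution) must be suppressed by a generic choice of involution-equivariant perturbation. Once these foundational statements are in place, the passage from the three-flavor exact triangle for $\HMR^{\circ}$ to the exact triangle for the mapping cone $\widetilde{\HMR}$ is a formal application of the five lemma combined with the fact that $\upsilon_{p}$ is supported away from the skein ball.
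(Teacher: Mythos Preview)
Your high-level strategy (the triangle detection lemma of \cite{KMOS2007}, implemented \`a la Bloom) matches the paper's, but two of the concrete steps are wrong in ways that would derail the argument.

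First, the topology of the double composite is not what you describe. Stacking two consecutive saddles does \emph{not} produce a surface containing an unknotted sphere; it contains a M\"obius band of self-intersection $+2$ (see Proposition~\ref{prop:double_cover_triangle}). The relevant hypersurface $S_{i,i-2}$ is the boundary of a regular neighbourhood of this M\"obius band, meeting the branch surface in an unknot, and its branched double cover is $\overline{\mathbb{CP}}^{2}$ minus a ball with the conjugation involution. The null-homotopy does not come from a connect-sum neck degeneration; it comes from stretching along $S_{i,i-2}$, and the vanishing of the broken contribution is because conjugation on $\overline{\mathbb{CP}}^{2}$ induces a fixed-point-free involution on the relevant monopole moduli space, so the count is even (Lemma~\ref{lem:20_complex}). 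Likewise, the triple composite contains a twice-punctured $\mathbb{RP}^{2}$, and the quasi-isomorphism arises from stretching along the hypersurface $R_{3,0}$ whose exterior is simply the product cobordism with a ball removed.

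Second, your remark that ``the abundance of real reducibles \ldots\ must be suppressed'' is backwards. The reducibles are not suppressed; they are the entire mechanism by which the $R_{3,0}$ facet contributes the quasi-isomorphism (the element $\bar n_u$ in Proposition~\ref{prop:triangle} is a count of reducibles on $(B_{3,0},M_{3,0})$, and one relies on \cite[Section~5]{ljk2023triangle} to ensure only reducibles appear). Relatedly, the passage from the $\widecheck{\HMR}$ triangle to the $\widetilde{\HMR}$ triangle is not a formal five-lemma argument. The $\widetilde{\HMR}$ cobordism maps are built as lower-triangular $2\times 2$ matrices whose off-diagonal entries require an extra metric parameter coming from the puncture $B_o$; one must construct the null-homotopies $f_{i+2,i}$ and the operator $L_{3,0}$ at this $2\times 2$ level and then observe that $L_{3,0}$ is a quasi-isomorphism because its diagonal entries are the already-established ``to'' quasi-isomorphisms of \cite{ljk2023triangle}. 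Simply knowing that $\upsilon_p$ commutes up to homotopy with the saddle maps is not enough to propagate the triangle to the cone.
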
 
Unlike Floer homologies of branched double covers, the construction of $\widetilde{\HMR}$ depends on the covering involutions.
The proofs of Theorem~\ref{thm:triangle_intro} and Theorem~\ref{thm:SS_intro}, building upon~\cite[Theorem~1.1]{ljk2023triangle}, rely ultimately  on the reducibles, which are preserved under the involutions.

\subsection{Observations and remarks}
\label{subsec:intro_obs}
The salient feature of $\HMR^{\circ}_{\bullet}(K)$ is its grading.
By restricting to invariant subspaces under anti-linear involutions, we introduce factors of $\frac12$ into various index formulae.
This is related to the basic fact that a real subspace $V^{\reals}$ of a complex vector space $V$ has dimension $\frac{1}{2}\dim_{\reals}V$.
As a result, 
in the case of non-torsion \spinc structures, $\HMR^{\circ}_{\bullet}(K)$ 
loses the absolute $\bbZ/2$ grading that is available in ordinary monopole Floer homologies.
Similarly, the induced map from a given cobordism no longer has a homogenous $\bbZ/2$-degree among all \spinc structures.

To be precise, $\widetilde{\HMR}_{\bullet}(K,p)$ is a direct  sum over all real \spinc structures over $\dbcv_2(K)$.
A real \spinc structure $(\fraks,\uptau)$ consists of a \spinc structure $\fraks$ on $\dbcv_2(K)$ and an anti-linear involution $\uptau$ on the spinor bundle lifting the covering involution.
If $\fraks$ is torsion, then $\widetilde{\HMR}_{\bullet}(K,p,\fraks,\uptau)$ admits a relative $\bbZ$-grading, much like $\widetilde{\HM}_{\bullet}(\dbcv_2(K),\fraks)$.
The aforementioned factor of $\frac12$ changes the computation of the Euler characteristics of $\widetilde{\HMR}_{\bullet}$, which is now well-defined up to an overall sign for the lack of an absolute $\bbZ/2$-grading.
We demonstrate this phenomenon as follows.
Denote
\[
	|\chi_{\HMR}|(K,p,\fraks,\uptau) = \left|\chi(\widetilde{\HMR}_{\bullet}(K,p,\fraks,\uptau))\right|.
\]
We will exhibit a knot $K$ for which $|\chi_{\HMR}|(K,p)$ and $|\chi(\widetilde{\HM}(\dbcv_2(K)))|$ disagree, even though there is a natural $1$-to-$1$ correspondence between the elements of $\widetilde{\HMR}_{\bullet}(K,p,\fraks,\uptau)$ and $\widetilde{\HM}_{\bullet}(\dbcv_2(K),\fraks)$.
Note that there is no such correspondence for an arbitrary link.
Let us suppress the base point $p$ in notations if $\widetilde{\HMR}_{\bullet}(K,p,\fraks,\uptau)$ is independent of $p$.

Let $K$ be the pretzel knot $P(-2,3,7)$.
Manolescu pointed out to the author that $|\chi_{\HMR}|(K)$ is equal to $3$ at its unique real \spinc structure.
We elaborate this fact in Section~\ref{subsec:euler_char}. 
On the other hand, the double branched cover of $K$ is the Brieskorn sphere $-\Sigma(2,3,7)$ with the  opposite orientation.
The generators of the chain complex of  $\widetilde{\HMR}_{\bullet}(K,\fraks,\uptau)$ are in bijection with the generators of the chain complex of $\widetilde{\HM}_{\bullet}(-\Sigma(2,3,7))$. 
Nevertheless, this correspondence does not preserve gradings. 
In Example~\ref{exmp:237} we will complete the calculations that
\[
\left|\chi(\widetilde{\HM}_{\bullet}(-\Sigma(2,3,7)))\right| = 1 \ne 3 = \big|\chi_{\HMR}\big|(P(-2,3,7)).
\]

Roughly speaking, $\widetilde{\HMR}_{\bullet}$ can be seen as a formal analogue of the $\bbF_2$-homology of the framed $3$-dimensional real Seiberg-Witten moduli space.
Meanwhile, Miyazawa~\cite[Definition~4.28]{Miyazawa2023} introduced an invariant for a nonzero determinant link:
\[|\text{deg}(K)|\]
as the absolute value of the signed count of the (framed) real Seiberg-Witten solutions on $\dbcv_2(K)$ with a real \spinc structure $(\fraks,\uptau)$.
The invariants $|\chi_{\HMR}|(K)$ and $|\deg(K)|$ coincide for $P(-2,3,7)$.
In fact, we prove the two invariants agree:
\begin{thm}
\label{conj:chi=deg}	
Given a nonzero determinant link $K$ and a real spin structure $(\fraks,\uptau)$ on $\dbcv_2(K)$, we have 
\[
|\chi_{\HMR}|(K,\fraks,\uptau) = |\deg(K)|.
\] 
\end{thm}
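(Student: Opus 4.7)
The plan is to compute $\chi(\widetilde{\HMR}(K,p,\fraks,\uptau))$ directly at the chain level and match, up to the overall sign ambiguity, with Miyazawa's signed count $\deg(K)$ of framed real Seiberg-Witten solutions. Since $\det(K)\neq 0$, the branched double cover $\dbcv_2(K)$ is a rational homology sphere, so the real \spinc structure $(\fraks,\uptau)$ is torsion and $\widetilde{\HMR}_{\bullet}(K,p,\fraks,\uptau)$ carries a relative $\bbZ$-grading. This makes $|\chi_{\HMR}|(K,\fraks,\uptau)$ well-defined, matching the sign ambiguity already present in $|\deg(K)|$.

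First, I would work in the real blown-up configuration space and use the mapping-cone model $\widetilde{\HMR}=\mathrm{Cone}(\upsilon_p)$. For a generic admissible perturbation $\q$, the chain-level generators split into three classes: the irreducible critical points and the boundary-stable and boundary-unstable critical points arising from the real blow-up of the unique reducible associated with $(\fraks,\uptau)$. The Euler characteristic of the resulting complex is invariant under admissible deformations of $(\q,g)$ by the usual Floer-theoretic continuation argument, so it equals the alternating signed count of these critical points for any convenient choice of data.

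Second, I would analyze the cone Euler characteristic. Following Bloom's computation for $\widetilde{\HM}$ of a rational homology sphere, the boundary-stable and boundary-unstable reducibles pair up and cancel in the signed count, leaving only the contribution from the irreducibles. Shrinking $\q$ to zero and invoking compactness and transversality, each irreducible critical point limits to a framed real Seiberg-Witten solution on $(\dbcv_2(K),\fraks,\uptau)$; these are exactly what Miyazawa counts, with signed sum $\deg(K)$. This would give $|\chi_{\HMR}|(K,\fraks,\uptau)=|\deg(K)|$ once signs are matched.

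The main obstacle is the orientation comparison. The Floer chain complex uses orientations inherited from real determinant line bundles over parametrized moduli spaces, while Miyazawa's signs come from the determinant of the real linearized Seiberg-Witten operator at each solution. One must verify these agree up to an overall sign, which is precisely the ambiguity absorbed by the absolute values on both sides. A related delicacy is the telescoping cancellation of the boundary contributions in the cone: unlike the complex setting, the local model near the reducible is an involutive real quotient, so the spectral-flow bookkeeping for $\upsilon_p$ near the blown-up reducible needs separate treatment to confirm the cancellation.
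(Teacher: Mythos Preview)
Your overall strategy---compute $\chi$ at the chain level of the mapping cone and match with Miyazawa's count---is the paper's strategy, but your execution contains a concrete error that would give the wrong answer.

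First, the ``to'' complex $\widecheck C(K,\fraks,\uptau)$ is $C^o\oplus C^s$; there are no boundary-unstable generators in it, so your proposed ``boundary-stable/boundary-unstable pairing'' does not exist in this model. What actually happens in the cone $\widecheck C\oplus\widecheck C$ is that the single tower $\{\fraka_i\}_{i\ge 0}$ of boundary-stable reducibles appears twice, and $\upsilon$ (which has degree $-1$) sends $(\fraka_i,0)\mapsto(0,\fraka_{i-1})$ for $i$ large. Truncating at height $N$ one sees the two finite towers have lengths differing by one, so their alternating contributions do \emph{not} cancel: they leave exactly $+1$. Meanwhile, because $\gr(\fraka_0,0)=\gr(0,\fraka_0)$, each irreducible $\upgamma$ appears in both copies at the \emph{same} mod-$2$ grading and hence contributes $2(-1)^{\gr(\upgamma)}$. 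The Euler characteristic is therefore
\[
1+2\sum_{\upgamma\in C^o(K)}(-1)^{\gr(\upgamma)},
\]
which is always odd. Your claim that the reducibles cancel completely would give the even number $2\sum(-1)^{\gr(\upgamma)}$ and would not match $|\deg(K)|$. (This is exactly where the real case diverges from Bloom's: for $\widetilde{\HM}$ the map $U$ has degree $-2$, so the two copies of each irreducible sit in opposite mod-$2$ gradings and cancel, giving $\chi=\pm1$; here $\upsilon$ has degree $-1$ and they add.)

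Second, the orientation and limiting discussion is unnecessary. The Euler characteristic depends only on the relative $\bbZ$-grading of generators, not on orientations of moduli spaces, and the comparison with Miyazawa's invariant is purely via the explicit formula: Miyazawa's framed count of real Seiberg-Witten solutions is precisely $\pm\big(1+2\sum_{\upgamma}(-1)^{\gr(\upgamma)}\big)$, the reducible contributing $\pm1$ and each irreducible $\pm2$. No $\q\to 0$ limit or determinant-line comparison is needed.
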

Notice that the left hand side is defined algebraically as a mapping cone while the right hand side is defined directly using framed moduli spaces of real Seiberg-Witten solutions.
Recently,
S. Kang, J. Park, and M. Taniguchi~\cite{KPT2024} computed more examples of $|\deg(K)|$ using Dai-Stroffregen-Sasahira's lattice homotopy type~\cite{DSM2023}, together with~\cite[Proposition~5.2]{KPT2024}:
\[
	|\deg(K)|=|\chi(\textit{SWF}_R(K))|,
\]
where $\textit{SWF}_R(K)$ is the real Seiberg-Witten Floer spectrum of Konno-Miyazawa-Taniguchi~\cite{KMT2023}.
We conjecture the following isomorphism.
\begin{conj}
\[
\widetilde{\HMR}_{\bullet}(K) \cong \tilde{H}_*(\textit{SWF}_R(K);\bbF_2).
\]
\end{conj}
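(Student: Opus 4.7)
The plan is to adapt the argument of Lidman--Manolescu, which identifies ordinary monopole Floer homology $\HM$ with the $S^1$-equivariant Borel homology of Manolescu's Seiberg--Witten Floer spectrum, to the real setting. On one side, $\widetilde{\HMR}_{\bullet}(K)$ is defined as the mapping cone of $\upsilon_p$ in a Morse-theoretic framework on the blown-up real configuration space over $\dbcv_2(K)$. On the other, $\textit{SWF}_R(K)$ of Konno--Miyazawa--Taniguchi~\cite{KMT2023} is a Conley-index spectrum built by finite-dimensional approximation of the real Seiberg--Witten equations. The overarching goal is to produce a chain-level isomorphism identifying generators and differentials on both sides, and compatible with the real \spinc decomposition $(\fraks,\uptau)$.

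First I would set up a common geometric picture. Choose a metric and perturbation on $\dbcv_2(K)$ adapted to the covering involution, together with a $\uptau$-equivariant spectral truncation of the linearized real Seiberg--Witten operator. For each such cut-off, the critical points of the approximated flow are precisely the real reducibles together with the finitely many irreducible real monopoles that represent generators of $\HMR^{\circ}_{\bullet}(K)$. The Conley index of Konno--Miyazawa--Taniguchi then computes reduced $\bbF_2$-homology as the relative homology of an attractor--repeller pair, in which the real reducible locus plays the role of the basepoint. The next step is to match these cells with the Morse-theoretic generators of $\widetilde{\HMR}_{\bullet}(K)$: the mapping-cone construction via $\upsilon_p$ corresponds, cell by cell, to the based-space structure on the Conley index, exactly as Bloom's $\widetilde{\HM}$ corresponds to the reduced homology of $\textit{SWF}$ in the ordinary case.

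Second, I would promote this identification of generators to a chain map. The continuation morphisms between approximations at different spectral levels should intertwine $\upsilon_p$, realised on the Floer side as a basepoint operator supported near the reducible locus, with the cofiber connecting map on the spectrum side. A Morse--Bott spectral sequence argument around the real reducible, adapted from~\cite{Bloom2011} and~\cite[Theorem~1.1]{ljk2023triangle}, should produce a filtration-preserving quasi-isomorphism. Naturality under the unoriented skein exact triangle of Theorem~\ref{thm:triangle_intro}, combined with the parallel surgery exact triangle that $\textit{SWF}_R$ is expected to satisfy, would then allow one to induct on an unknotting sequence to reduce the conjecture to the unknot, where both sides are computed directly.

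The hard part will be the analysis comparing infinite-dimensional real Seiberg--Witten trajectories on $\dbcv_2(K)$ with those of the finite-dimensional approximation, carried out $\uptau$-equivariantly. Lidman--Manolescu handled the corresponding $S^1$-equivariant uniform estimates; the real case requires tracking the anti-linear involution through compactness, gluing, and transversality, and verifying that the reducible strata—where the two theories meet and where the factor of $\tfrac12$ in the grading of Section~\ref{subsec:intro_obs} originates—are preserved throughout. A secondary subtlety is that for non-torsion $\fraks$ the real theory lacks an absolute $\bbZ/2$-grading, so the isomorphism may only be canonical summand-by-summand up to sign; over $\bbF_2$ this is immaterial, but the bookkeeping must be handled carefully to confirm that $|\chi_{\HMR}|(K)$ and $|\chi(\textit{SWF}_R(K))|$ match as asserted in the displayed identity preceding the conjecture.
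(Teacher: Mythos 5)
The statement you are addressing is not proved in the paper: it appears there explicitly as a \emph{Conjecture} (alongside a pointer to \cite[Conjecture~1.28]{KMT2023}), and the author explicitly defers further development to future work. There is therefore no proof in the paper to compare against, and your text is, as written, a research plan rather than a proof: it identifies the natural model --- a real, $\uptau$-equivariant analogue of the Lidman--Manolescu comparison between Morse--Floer homology and the homology of a finite-dimensional-approximation spectrum --- but every step carrying actual mathematical content is phrased with ``should'' or ``would'' and is not carried out.

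Two concrete gaps are worth naming. First, the proposed induction (``naturality under the skein exact triangle \ldots allows one to induct on an unknotting sequence to reduce to the unknot'') is circular as stated: to run a five-lemma argument you need a comparison map between the two exact triangles that is already known to exist and commute, and constructing that map is precisely the chain-level identification the conjecture asks for; exact triangles alone do not characterize a theory. Moreover, by your own phrasing $\textit{SWF}_R$ is only ``expected'' to satisfy the parallel triangle, so the induction rests on an unproven input. Second, the identification of the mapping cone of $\upsilon_p$ with the \emph{reduced} $\bbF_2$-homology of the Conley index is itself a substantial theorem even in the ordinary case --- it does not follow formally from ``the reducible locus plays the role of the basepoint'' --- and in the real setting the residual symmetry is $\{\pm 1\}$ rather than $S^1$, so the uniform estimates, the compactness and gluing theory, and the bookkeeping of which flavor of equivariant homology the cone of $\upsilon_p$ computes must all be redone rather than merely ``tracked.'' The paper itself offers evidence only at the level of Euler characteristics (Theorem~\ref{conj:chi=deg} combined with $|\deg(K)|=|\chi(\textit{SWF}_R(K))|$), which is consistent with, but far weaker than, the conjectured isomorphism.
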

See also \cite[Conjecture~1.28]{KMT2023}.
The isomorphism is in $\bbF_2$-coefficient, but we hope to define $\HMR^{\circ}_{\bullet}$ over $\bbZ$-coefficient in subsequent work and expect the isomorphism to hold over $\bbZ$ also.

It is currently unknown whether $|\chi_{\HMR}|(K)$ can be expressed in terms of classical link invariants.
Despite the exact triangle~\eqref{eqn:triangle}, it is not straightforward to write down a relation, if exists, for $|\chi_{\HMR}|(K_i,p)$ of an unoriented skein triple of links $(K_2,K_1,K_0)$.
This complication arises from the lack of absolute gradings, and the fact that each of the maps in~\eqref{eqn:triangle} is a sum of homomorphisms that are not supported on a single $\bbZ/2$-grading.
Computing changes of $|\chi_{\HMR}|$ in a skein relation requires further understanding of the cobordism maps involved and possibly groups without relative $\bbZ/2$-gradings.
We will give an example of a skein exact triangle starting from $P(-2,3,7)$ in Remark~\ref{rem:noskein}

Back to the spectral sequence $\{(E_i(K,p),d_i)\}_{i\ge 1}$ abutting to $\widetilde{\HMR}(K,p)$.
The intermediate pages between $E_2$ and $E_{\infty}$ should produce new link invariants.
It is known that $\widetilde{\HMR}_{\bullet}(K)$ differs from existing Floer homologies admitting spectral sequences from the Khovanov homology such as $\widehat{\HF}(\dbcv_2(K))$~\cite{OzSz2005HFdc}, $I^{\sharp}(K)$~\cite{KMunknot2011}, or $\widehat{\textit{HFK}}(K)$~\cite{Dowlin2024SSHFK}.
Indeed, $\widetilde{\HMR}_{\bullet}(K) = \bbF_2$ for any torus knot, whereas the Khovanov homology and the aforementioned homologies are highly nontrivial.
It follows that there are far more differentials in the $\Khr \Rightarrow \widetilde{\HMR}_{\bullet}$ spectral sequence than the above spectral sequences.

The author expects the spectral sequence in Theorem~\ref{thm:SS_intro} to fit into the framework of \emph{Khovanov-Floer homology} by Baldwin-Hedden-Lobb~\cite{BHL2019}.
In particular, every page of $E_i$ should be functorial with respect to based link cobordisms.

\subsection{Organization of sections}
We define $\widetilde{\HMR}_{\bullet}(K,p)$ in Section~\ref{sec:HMRtildedefn} and make some remarks about its Euler characteristic.
We review the definition of Khovanov homology in Section~\ref{subsec:odd_Kh}.
In Section~\ref{sec:a_SS}, we construct the spectral sequence in Theorem~\ref{thm:SS_intro} and prove both Theorem~\ref{thm:SS_intro} and Theorem~\ref{thm:triangle_intro}.

\section{$\widetilde{\HMR}_{\bullet}(K,p)$}
\label{sec:HMRtildedefn}
For the scope of this paper, the links of our interests will always lie in $S^3$, and the corresponding cobordisms will be properly embedded surfaces of $S^4$ with some standard $4$-balls removed.
It is customary to write, for instance, $K$ for a link $K \subset S^3$ and $\Sigma:K_1 \to K_0$ for a properly embedded surface $\Sigma \subset I \times S^3$ whose boundary is $\overline{K}_0 \sqcup K_1$, where $I$ is an interval.

To emphasize the ambient $3$- and $4$-manifolds, we adopt the following conventions.
We denote by $(Y,K)$ a pair of a $3$-manifold with a link $K \subset Y$.
Alternatively, $(Y,K)$ is viewed as an orbifold having cone angle $\pi$ along $K$.
Blackboard letters are reserved for double branched covers.
For example, $\bbY$ represents the double branched cover of $Y$ along $K$.
Similarly, a cobordism from $(Y_1,K_1) \to (Y_0,K_0)$ is written as $(W,\Sigma)$, where $\Sigma$ is a properly embedded surface in a $4$-manifold $W$.
The branched double cover of $(W,\Sigma)$ is $\bbW$ and there is a deck transformation $\upiota_W: \bbW \to \bbW$.

Of course, in this paper typically $Y = S^3$ and $W = S^4 \setminus \cup_{1 \le i \le n} B^4$. 
We remind the readers that the author's previous construction \cite{ljk2022} extends readily to any links $K$ in any $3$-manifold $Y$, though the exposition there focuses on $Y=S^3$.
\subsection{Review of ${\HMR}^{\circ}_{\bullet}(K)$}
\label{sec:review_hmr}
For more details, see \cite{ljk2022}.
Given a link $K \subset S^3$, let $\bbY = \dbcv_2(S^3,K)$ be its branched double cover and let $\upiota:\bbY \to \bbY$ be the deck transformaton.
Fix an $\upiota$-invariant Riemannian metric on $\bbY$.
In \cite[Definition~3.7]{ljk2022}, the author defined a \emph{real \spinc structure} over $(\bbY,\upiota)$ to be a pair $(\fraks, \uptau)$ of a \spinc structure $\fraks = (S,\rho)$ on $\bbY$ and an anti-linear involutive lift $\uptau:S \to S$ of $\upiota$ on the spinor bundle, compatible with the Clifford multiplication $\rho$.
A real \spinc structure $(\fraks_{\Sigma},\uptau_{\Sigma})$ on a branched double cover of a surface $\Sigma$ is defined analogously.
As a final note, real \spinc structures are essential inputs of our invariants, for double branched covers of $S^3$ or $S^4 \setminus (\cup_n B^4)$, every \spinc structure admits a unique real structure so it suffices to specify the underlying \spinc structure.

The real monopole Floer homology group $\HMR^{\circ}_{\bullet}$ is an $\infty$-dimensional analogue of equivariant Morse homology of the Chern-Simons-Dirac functional $\calL$ on the Seiberg-Witten configuration space $\calB(K,\fraks,\uptau)$.
The equivariance is due to the nonfree action of $(\pm 1)$ constant gauge transformations.
In practice, one considers the blown-up configuration space $\calB^{\sigma}(K,\fraks,\uptau)$, views it as a manifold with boundary, and studies trajectories of a blown-up version of $\grad \calL$.
Furthermore, one perturbs the gradient $\grad \calL$ abstractly (chosen from a large Banach space of perturbations) to ensure all the relevant moduli spaces of are regular.

There are three types of critical points of $\grad \calL$ on the blown-up configuration space: \emph{interior} (irreducible), \emph{boundary-stable} (reducible), and \emph{boundary unstable} (reducible).
Let $C^o,C^s,C^u$ denote the $\bbF_2$-vector space   generated by interior, boundary-stable, and boundary-unstable critical points, respectively.
We define three Floer homology groups
\[
	\widehat{\HMR}_{*}(K,\fraks,\uptau), \quad 
	\widecheck{\HMR}_{*}(K,\fraks,\uptau), \quad 
	\overline{\HMR}_{*}(K,\fraks,\uptau)
\]
using a combination of interior and boundary critical points and counting trajectories between them.
For example, the ``\HMR-to'' complex is given by
	\begin{equation*}
		\check C(K,\fraks,\uptau) = C^o \oplus C^s, \quad \check\del = 
		\begin{pmatrix}
			\del^o_o && -\del^u_o\bar\del^s_u\\
			\del^o_s && \bar\del^s_s - \del^u_s\bar\del^s_u
		\end{pmatrix},
\end{equation*}	 
where $\del^o_s$ counts index-$1$ trajectories of $\grad\calL$  from interior critical points to the boundary-stable ones,  $\overline{\del}^s_u$ counts reducible trajectories from boundary-stable critical points to the boundary-unstable ones, et cetera.

The groups above are graded, and we complete the grading ``$*$'' in the negative direction to define cobordisms maps, indicated by ``$\bullet$''.
Whenever the real \spinc structure $(\fraks,\uptau)$ is dropped from notations, we take the sum over all real \spinc structures.

Given a cobordism $\Sigma:K_1 \to K_0$, let $\bbW$ be the double branched cover $\dbcv_2(I \times S^3, \Sigma)$, treated as a $4$-manifold with involution.
Suppose $(\fraks_{\Sigma},\uptau_{\Sigma})$ is a real \spinc structure on $(\bbW,\upiota_W)$.
To each flavour $\circ \in \{\wedge,\vee,-\}$ we have a homomorphism
\[
	\HMR^{\circ}(\Sigma,\fraks_{\Sigma},\uptau_{\Sigma}):
	\HMR^{\circ}_{\bullet}(K_1,\fraks_1,\uptau_1) \to 
	\HMR^{\circ}_{\bullet}(K_0,\fraks_0,\uptau_0),
\]
where $(\fraks_i,\uptau_i)$ are restrictions of real \spinc structures on the boundaries.
Each of these maps is defined on the chain level by counting the \emph{moduli space}
\[M_z(\fraka,\Sigma^*,\frakb)\]
\emph{of real Seiberg-Witten solutions} on $(\bbW^*,\upiota_W)$, where the superscript ``$*$'' indicates that we are adding two cylindrical ends 
\[(-\infty,0] \times \dbcv_2(K_1), \quad \text{and} \quad
[1,\infty) \times \dbcv_2(K_0)\] 
to the respective boundary components of $\bbW$ at $\{0\}$ and $\{1\}$. 
Alternatively, $\bbW^*$ is the branched double cover of the surface $\Sigma^*$ obtained from $\Sigma$ by adjoining  cylindrical ends, embedded in $\reals \times S^3$.
An element of $M_z(\fraka,\Sigma^*,\frakb)$ is required to be asymptotic to critical points $\fraka$ and $\frakb$ on $ \dbcv_2(K_1)$ and $\dbcv_2(K_0)$ at $-\infty$ and $+\infty$, respectively.
The subscript ``$z$'' denotes a homotopy class of the moduli space.
Implicitly, we choose regular perturbations supported near the neck of the boundaries of $\bbW$ so that all $M_z(\fraka,\Sigma^*,\frakb)$ are transverse.

More generally, let $\calB^{\sigma}(\Sigma,\fraks_{\Sigma},\uptau_{\Sigma})$ be the $4$-dimensional real Seiberg-Witten configuration space on $(\bbW,\upiota_W)$ and let $\calG(\Sigma)$ be the gauge group.
Given a cohomology class $u \in H^*(\calB^{\sigma}(\Sigma,\fraks_{\Sigma},\uptau_{\Sigma});\bbF_2)$, 
there is a homomorphism 
\[
	{\HMR}^{\circ}(u|\Sigma,\fraks_{\Sigma},\uptau_{\Sigma}): 
	{\HMR}_{\bullet}^{\circ}(K_1,\fraks_1,\uptau_1) \to {\HMR}_{\bullet}^{\circ}(K_0,\fraks_0,\uptau_0).
\]
that simultaneously evaluates the class $u$ over the relevant Moduli spaces.
As a special case, if $\Sigma = [0,1] \times K$, the evaluation map $\calG(\Sigma) \to \bbZ/2$ at $p \in K$:
\[
	g \mapsto g(p)
\]
defines an element $\upsilon_p \in H^1(\calB^{\sigma}(\Sigma,\fraks_{\Sigma},\uptau_{\Sigma}))$.
The class $\upsilon_p$ depends only the component of $K$ on which $p$ lies and defines a module structure on $\HMR^{\circ}_{\bullet}(K)$.
If $K$ has $n$ components, then $\HMR^{\circ}_{\bullet}(K)$ is a module over the ring
\[
	\bbF_2[\upsilon_1,\dots,\upsilon_n]/(\upsilon_i^2=\upsilon_j^2).
\]
Moreover, suppose $\bbT = H^1(\bbY;i\reals)/2\pi H^1(\bbY;i\bbZ)$. Then $\HMR^{\circ}_{\bullet}(K)$ is also a module of $H^*(\bbT;\bbF_2) \cong \Lambda^*[x_1,\dots,x_{\ell}]$ where $\Lambda^*$ denotes the exterior algebra and $\ell$ is equal to $b_1(\bbY)$.
\subsection{Definition of $\widetilde{\HMR}_{\bullet}(K,p)$}
The functor $\widetilde{\HMR}_{\bullet}(K,p)$ is more appropriately defined on the category $\textbf{Link}_{\infty}$ of based links.
An object in $\textbf{Link}_{\infty}$  is a link $K$ in $S^3$ containing a based point $p=\infty \in S^3$.
A morphism in $\textbf{Link}_{\infty}$  is an isotopy class of smoothly properly embedded surface $\Sigma$ in $[0,1] \times S^3$ that contains $[0,1] \times \{\infty\}$.
The isotopies are required to fix pointwise a neighbourhood of the boundary and $[0,1] \times \{\infty\}$.
In what follows, our links will be based and the $\upsilon$-map will be understood as taken over $ \infty$. 

Let $\upsilon$ be the first cohomology class obtained by evaluating gauge transformations at $\{\frac14\} \times \{\infty\}$.
We reinterpret $\widecheck{\HMR}(\upsilon|\Sigma)$ as in Bloom~\cite{Bloom2011}.
Removing a small $4$-ball $B_o$ from $([0,1] \times S^3, \Sigma)$ around $\{\frac14\} \times \{\infty\}$, we obtain a surface in $([0,1] \times S^3)\setminus B_o$ with an additional unknot boundary $\U_1 \subset \del  B_o$.
We regard $(\del B_o, \U_1)$ as a new incoming end.
Now, to this new pair we attach three infinite cylindrical ends \[
\bigg((-\infty,0) \times S^3, (-\infty,0) \times K_1
\bigg), 
\ 
\bigg((1,+\infty) \times S^3, (1,+\infty) \times K_0
\bigg), \
\bigg((0,\infty) \times \del B_o, (0,\infty) \times \U_1
\bigg)\] 
to obtain $(W^{o*},\Sigma^{o*})$.
Let $\bbW^{o*}$ be the double branched cover of $W^{o*}$ along $\Sigma^{o*}$ equipped with deck transformation $\upiota_W$.
We choose an $\upiota_W$-invariant metric, real \spinc structure, and perturbation so that on the $(\del B_o,\U_1)$ boundary, the critical points for $\U_1$ is generated a tower of reducibles $\{\frakc_i: i \in \bbZ\}$.
(They are in one-to-one correspondence with the spectrum of the Dirac operator, where $\frakc_0$  lowest positive eigenvalue.)
For this choice, we have $\gr(\frakc_i) = i$, and there are zero trajectories from $\frakc_i$ to $\frakc_{i-1}$ modulo two.

We define \[\check m(\upsilon \ | \ \Sigma): C^o(K_1) \oplus C^s(K_1) \to C^o(K_0) \oplus C^s(K_0)\] by its entries $m^{**}_*,\bar m^{**}_*$ which counts the zero-dimensional pieces of the monopole moduli space $M_z(\fraka, \frakc_{-2}, \Sigma^{**}, \frakb)$, where $\fraka,\frakb$ are critical points of various types, over $K_1,K_0$ respectively: :
\[
	\check m(\upsilon \ | \ \Sigma) =
	\begin{bmatrix}
		m^{uo}_o(\frakc_{-2} \otimes \ \cdot ) &
		m^{uu}_o(\frakc_{-2} \otimes \delbar^s_u (\cdot) ) + \del^u_o\bar m^{us}_u(\frakc_{-2} \otimes \ \cdot )\\
		m^{uo}_s(\frakc_{-2} \otimes \ \cdot ) &
		\bar m^{uu}_s(\frakc_{-2} \otimes \delbar^s_u (\cdot) ) +
		m^{uu}_o(\frakc_{-2} \otimes \delbar^s_u (\cdot)) +
		\del^u_s m^{us}_u(\frakc_{-2} \otimes \ \cdot )
	\end{bmatrix}.
\]
Here $*$ is a placeholder for $o$ (interior), $s$ (boundary-stable), or $u$ (boundary-unstable).
The map $\check m(\upsilon|\Sigma)$ can be seen to be chain homotopic to \cite[Proposition~13.11]{ljk2022}, using the fact that there are even number of isolated trajectories between $\frakc_i$ and $\frakc_{i-1}$ (in the ordinary case, there does not exists any trajectory for index reason). 
\begin{defn}
The \emph{$\widetilde{\HMR}$ chain complex} $\widetilde C(K)$  is given by the mapping cone of $\check m(\upsilon \ | \ [0,1] \times K)$, consisting of two copies of the ``to''-complexes, where
\[
\widetilde C(K) = \widecheck C(K) \oplus \widecheck C(K), \quad
\widetilde\del
= \begin{bmatrix}
	\check\del & 0\\
	\check m(\upsilon \ | \ [0,1] \times K) & \check\del
\end{bmatrix}.
\]
\end{defn}
There is a long exact sequence
\[
\begin{tikzcd}
	\cdots \ar[r,"j"] &
	\widetilde{\HMR}_{*}(K) \ar[r,"i"] &
	\widecheck{\HMR}_{*}(K) \ar[r,"\upsilon"] &
	\widecheck{\HMR}_{*}(K) \ar[r,"j"] &
	\cdots
\end{tikzcd}
\]
The definition of cobordism map in $\widetilde{\HMR}$ involves counting Seiberg-Witten monopoles over a $1$-parameter family of metrics.
The setup is slightly different from \cite{Bloom2011}.
We begin with the orbifold picture.
Let $(W^o,\Sigma^o)$ be 
\[(([0,1] \times S^3, \Sigma) \setminus B_o, \Sigma \setminus B_o)\] 
and $(Y_{11}, K_{11}), (Y_{00},K_{00})$ be the boundaries $\{0\} \times (S^3,K_1), \{1\} \times (S^3,K_0)$, respectively.

Let $\epsilon > 0$ be small on that $\Sigma^o \cap [0,\epsilon] \times S^3$ is a product.
Consider the ``pushed-in hypersurface'' $( Y_{10}, K_{10})$ in $( W^o,\Sigma^o)$, formed by two pieces. 
The first piece of $ Y_{10}$ is 
$\{\epsilon\} \times (S^3 \setminus \Nbhd(\infty))$, where $\Nbhd(\infty) \subset S^3$ is a small ball around $\infty$.
The second piece is a  neighbourhood of the straight arc $[0,1/4] \times \{\infty\}$, joined to $\{\epsilon\} \times (S^3 \setminus \Nbhd(\infty))$ by smoothing the corner.
The $3$-dimensional hypersurface $ Y_{01}$ intersects $\Sigma^o$ at a circle $K_{01}$ (i.e. a hypersurface in $\Sigma^o$), which consists of $\{\epsilon\} \times K_0$ and an arc that goes around the circle boundary around $\{1/2\} \times \{\infty\}$.
In particular, the circle is contained in the region bounded by $K_{00}$ and $K_{01}$.
Similarly, we construct $(Y_{10},K_{10})$ as the ``pushed-in hypersurface'' of $(Y_{11},K_{11})$ at $(1-\epsilon)$.
Let $ W_{00,10}$ be the cobordism between $Y_{00}$ and $Y_{10}$, and $ W_{01,11}$ be the cobordism between $Y_{01}$ and $Y_{11}$.

\begin{figure}[ht]
\centering
\tikzset{every picture/.style={line width=0.75pt}} 
\begin{tikzpicture}[x=0.6pt,y=0.6pt,yscale=-1,xscale=1]

\draw    (20,37) -- (630,38) ;
\draw    (20,37) -- (20,267) ;
\draw    (20,267) -- (630,267) ;
\draw    (630,38) -- (630,267) ;
\draw    (70,37) -- (70,157) ;
\draw   (330,202) .. controls (330,199.24) and (332.24,197) .. (335,197) .. controls (337.76,197) and (340,199.24) .. (340,202) .. controls (340,204.76) and (337.76,207) .. (335,207) .. controls (332.24,207) and (330,204.76) .. (330,202) -- cycle ;
\draw    (70,157) -- (350,157) ;
\draw    (70,247) -- (350,247) ;
\draw    (70,247) -- (70,267) ;
\draw    (570,37) -- (570,177) ;
\draw    (300,177) -- (570,177) ;
\draw    (300,227) -- (570,227) ;
\draw    (570,227) -- (570,267) ;
\draw    (350,157) .. controls (439.33,157.33) and (441.33,248.67) .. (350,247) ;
\draw    (300,177) .. controls (265.33,182.67) and (268.67,228) .. (300,227) ;
\draw    (20,22) -- (570,22) ;
\draw [shift={(570,22)}, rotate = 180] [color={rgb, 255:red, 0; green, 0; blue, 0 }  ][line width=0.75]    (0,5.59) -- (0,-5.59)   ;
\draw [shift={(20,22)}, rotate = 180] [color={rgb, 255:red, 0; green, 0; blue, 0 }  ][line width=0.75]    (0,5.59) -- (0,-5.59)   ;
\draw    (70,298) -- (630,298) ;
\draw [shift={(630,298)}, rotate = 180] [color={rgb, 255:red, 0; green, 0; blue, 0 }  ][line width=0.75]    (0,5.59) -- (0,-5.59)   ;
\draw [shift={(70,298)}, rotate = 180] [color={rgb, 255:red, 0; green, 0; blue, 0 }  ][line width=0.75]    (0,5.59) -- (0,-5.59)   ;

\draw (22,270.4) node [anchor=north west][inner sep=0.75pt]    {$Y_{11}$};
\draw (62,269.4) node [anchor=north west][inner sep=0.75pt]    {$Y_{10}$};
\draw (551,269.4) node [anchor=north west][inner sep=0.75pt]    {$Y_{01}$};
\draw (601,269.4) node [anchor=north west][inner sep=0.75pt]    {$Y_{00}$};
\draw (251,2.4) node [anchor=north west][inner sep=0.75pt]    {$W_{11,01}$};
\draw (311,300.4) node [anchor=north west][inner sep=0.75pt]    {$W_{10,00}$};
\draw (73,190.4) node [anchor=north west][inner sep=0.75pt]    {$W_{11,10}$};
\draw (571,190.4) node [anchor=north west][inner sep=0.75pt]    {$W_{01,00}$};
\end{tikzpicture}
\caption{Hypersurfaces in a (punctured) cobordism.}
\label{fig:tilde_cob_hypersurfaces}
\end{figure}
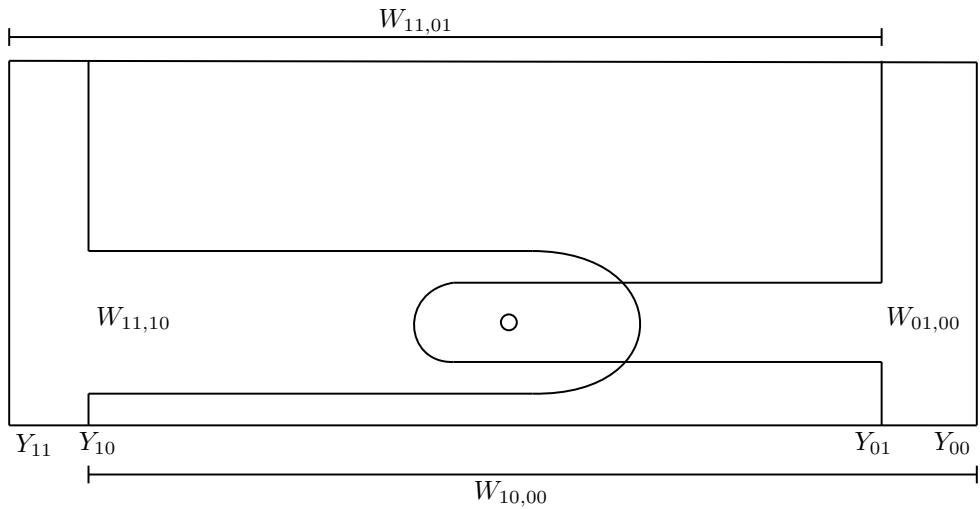

Choose an initial orbifold metric on $(W^{o},\Sigma^{o})$.
We introduce a $1$-parameter family $G_{00,11}$ of orbifold metrics $[-\infty,+\infty]$ by inserting a metric cylinder $[T,0] \times (Y_{10},K_{10})$ if $T < 0$, and a metric cylinder $[0,T] \times (Y_{01},K_{01})$ if $T > 0$.
The metric is broken at $\pm \infty$.
Let $H^{u*}_*(\frakc_{-2} \otimes \ \cdot)$ count the solutions in the zero-dimensional moduli spaces over $G_{01}$:
\[
M(\fraka,\Sigma^{o*},\frakb)
= \bigcup_{\check g \in G_{00,11}} \bigcup_z M_z(\fraka,\frakc_{-2},\Sigma^{o*},\frakb).
\]
Using super- and subscripts to indicated the cobordism for $m$ and $\bar m$, we set
\begin{align*}
	\check H(\upsilon|\Sigma_{11,00}) &=
\begin{bmatrix}
	H^{uo}_o(\frakc_{-2}\otimes \ \cdot) &
	H^{uu}_o(\frakc_{-2} \otimes \delbar^s_u(\cdot)) +\del^u_o \bar H^{us}_u(\frakc_{-2} \otimes \ \cdot)
	\\
	H^{uo}_s(\frakc_{-2}\otimes \ \cdot) &
	H^{us}_s(\frakc_{-2}\otimes \ \cdot)+H^{uu}_s(\frakc_{-2} \otimes \ \delbar^s_u(\cdot)) +\del^u_s \bar H^{us}_u(\frakc_{-2} \otimes \ \cdot)
\end{bmatrix}\\
&\ + 
\begin{bmatrix}
	0 &
		m^u_{o,11,01} \bar m^{us}_{s,01,00}(\frakc_{-2} \otimes \ \cdot)
	+m^u_{o,11,10} \bar m^{us}_{s,01,00}(\frakc_{-2} \otimes \ \cdot) \\
	0 &
		m^{u}_{o,11,01} \bar m^{us}_{s,01,00}(\frakc_{-2} \otimes \ \cdot)
	+m^u_{o,11,10} \bar m^{us}_{s,01,00}(\frakc_{-2} \otimes \ \cdot)
\end{bmatrix}.
\end{align*}
\begin{defn}
The \emph{cobordism map} $\widetilde{\HMR}(\Sigma):\widetilde{\HMR}_{\bullet}(K_1) \to \widetilde{\HMR}_{\bullet}(K_0)$ is defined  as, on the chain level, $\tilde m(\Sigma):\check C(K_1)^{\oplus 2} \to \check C(K_0)^{\oplus 2}$
\[
	\tilde m(\Sigma) = \begin{bmatrix}
		\check m(\Sigma_{10,00}) & 0\\
		\check H(\upsilon \ | \ \Sigma_{11,00}) &
		\check m(\Sigma_{11,01})
	\end{bmatrix}.
\]
\end{defn}
\begin{rem}
The well-definedness of re-interpreted $\upsilon$ and $\tilde m(\Sigma)$ as chain maps can be proved the same way as outlined in \cite[Remark~8.3]{Bloom2011}. In particular, the proof can be extracted from the proof of Lemma~\ref{lem:10_complex} and Lemma~\ref{lem:20_complex}.
\end{rem}
\subsection{Euler characteristics}
\label{subsec:euler_char}
We begin with the Proof of Theorem~\ref{conj:chi=deg} which regarding the equality between $|\chi_{\HMR}|(K,p)$ and Miyazawa's $|\deg(K)|$.
\begin{proof}[Proof of Theorem~\ref{conj:chi=deg}]
Suppose $(K,p)$ is a link with nonzero determinant based at a point $p$.
Since $b^1(\dbcv_2(K))$ is zero, the $\upsilon$ map is independent of base points. 
Once and for all, fix a real spin\textsuperscript{c} structure $(\fraks,\uptau)$ on $\dbcv_2(K)$ lifting the covering involution.
Recall the chain complex $\widetilde C(K)= \widetilde C(K,p,\fraks,\uptau)$ is a sum of two copies of the ``to''-complex
\[
\widecheck C(K) \oplus \widecheck C(K).
\]
We label the generators as follows.
Since $\dbcv_2(K)$ is a rational homolog sphere, we arrange the perturbations so that in the first summand $\widecheck C(K)$, there is a tower $\{\fraka_i:i\in \bbZ_{\ge 0}\}$ of boundary-stable critical points.
Moreover, let $C^o(K)$ be the set of interior (irreducible) critical points.
Fix a $\bbZ$-grading on $\widecheck C(K)$ and hence $\widetilde{C}(K)$ so that \[\gr(\fraka_0,0) = \gr(0,\fraka_0)=0.\]
This is not canonical so the Euler characteristic is defined up to a sign.
See Figure~\ref{fig:237} for a picture of the chain complex for the example $K = P(-2,3,7)$.

Since there are only finitely many irreducibles, there is a sufficiently large $N > \max_{\upgamma}(\gr(\upgamma_{a}))$ such that for all $i \ge N$, we have $\check{\del}(\fraka_i)=0$ and $\upsilon(\fraka_i)=\fraka_{i-1}$.
It follows that for sufficiently high degrees, the differential $\widetilde{\del}$ satisfies
\[
	\widetilde{\del}(\fraka_i,0)=(0,\fraka_{i-1}).
\]
Therefore, the Euler characteristic $\chi(\widetilde{\HMR}_{\bullet}(K,p,\fraks,\uptau))$ can be computed by the alternating sum of dimensions of the $\bbF_2$-vector spaces generated by the following finite set
\[
\big\{(\fraka_i,0): 0 \le i \le N+1\big\} \cup \big\{(0,\fraka_i): 0 \le i \le N\big\} \cup \big\{(\upgamma,0): \upgamma \in C^o(K)\big\}\cup \big\{(0,\upgamma):\upgamma \in C^o(K)\big\}.
\]
The resulting quantity is 
\[
	1 + 2\sum_{\upgamma \in C^o(K)}(-1)^{\gr(\upgamma)}.
\]
But this number is precisely the invariant $|\deg(K)|$ as in \cite[Remark~4.9]{Miyazawa2023}, up to an overall sign.
\end{proof}
Let us compute $|\chi_{\HMR}|$ from the first principle in the following examples.
\begin{exmp}
	Let $\U_{n+1}$ be the $(n+1)$-component unlink based at a point $p$.
	Let $\bbY_n \cong \#_n(S^1 \times S^2)$ be its double branched cover.
	Consider the unique torsion real \spinc structure $(\fraks_{tor},\uptau_{tor})$.
	The Picard torus $H^1(\bbY_n;i\bbR)/H^1(\bbY_n;i\bbZ)$, parametrizing the (real) reducible Seiberg-Witten solutions on $\bbY_n$, is an $n$-torus $\bbT^n$.
	Choose a Morse function $f:\bbT^n \to \reals$ and let $C(\bbT^n)$ be the set of critical points.
	By \cite[Proposition~14.8]{ljk2022}, one can choose the metrics and perturbations so that ``to'' chain complex $\check C = C^s$ is generated by the following set of reducible critical points
	\[\{(\alpha,i): \alpha \in C(\bbT^n), i \in \bbZ_{i \ge 0}.\}\] 
	Moreover, the index-$1$ trajectories contributing to $\check\del$ arise from Flow lines of $f$.
	A choice of a component of $\U_{n+1}$ determines a basis of $H_1(\bbT^n;\bbF_2)$ and
	an endomorphism $\upsilon$.
	Moreover, it determines the following isomorphism:
	\[
	\widecheck{\HMR}_*(\U_{n+1},p)\cong H_*(\bbT^n;\bbF_2) \otimes \bbF[\upsilon,\upsilon^{-1}]/\bbF_2[\upsilon],
	\]
	as an $\bbF_2[\upsilon]$-module.
	To compute $\widetilde{\HMR}_{\bullet}(\U_{n+1},p)$, consider $\check C\oplus \check C$ and
	observe that the mapping cone of $\upsilon$ kills all but the generators of the form $(\alpha_0,0)$, where $\alpha \in C(\bbT^n)$.
	It follows that upon taking homology of $\tilde{\del}$, there is an isomorphism
	\[
		\widetilde{\HMR}_{\bullet}(\U_{n+1},p) \cong H_*(T^n;\bbF_2)\cong \Lambda^*[x_1,\dots,x_n],
	\]
	where the right hand side is the exterior alegbra over $\bbF_2$ of $n$ variables.
	This is an isomorphism of $\bbF_2$-vector spaces, but one can further identify the elements $x_i$ the $\upsilon$-maps at the other $n$ components of $\U_{n+1}$.
	In particular, 
	\[
		\dim \widetilde{\HMR}_{\bullet}(\U_{n+1},p)=2^n.
	\]
	Furthermore, $\chi_{\HMR}(\U_{n+1},p,\fraks,\uptau)$ is nonzero if and only if $n=0$ and $(\fraks,\uptau)=(\fraks_{tor},\uptau_{tor})$.
	In this scenario, we have
	\[
	|\chi_{\HMR}|(\U_1,p,\fraks_{tor},\uptau_{tor}) = 1. 
	\]
	\hfill \qedsymbol
\end{exmp}

We spell out the details of the calculation of $P(-2,3,7)$.
\begin{exmp}
	\label{exmp:237}
	Let $K = P(-2,3,7)$.
	It was known that the double branched cover of $K$ is diffeomorphic to the Brieskorn sphere
	\[
		\bbY = -\Sigma(2,3,7) = \{(z_1, z_2, z_3) \in \bbC^3\big||z_1|^2 + |z_2|^2 + |z_3|^2=1, z_1^2 + z_2^3+z_3^7=0\}.
	\]
	with the orientation reversed. 
	The covering involution can be identified with $\upiota(z_1,z_2,z_3) = (\overline{z_1},\overline{z_2}, \overline{z_3})$.
	There is precisely one real \spinc structure up to equivalence.
	One analyzes the Seiberg-Witten solutions on $\bbY$ using the description in \cite{MOY}. 
	For a choice of $\upiota$-invariant metric and regular perturbation, there are precisely two irreducible solutions and one reducible solution.
	In addition, these Seiberg-Witten solutions are invariant under the real structure and generate the $\HMR$ chain complexes.
	
	Therefore, the $\HMR$-``to'' complex $\check{C}$ is generated by two irreducible critical points $\alpha$ and $\beta$, and a tower $\{\fraka_i: i \in \bbZ_{\ge 0}\}$ of boundary-stable reducible critical points.
	In particular, $\fraka_0$ is one degree higher than both $\alpha$ and $\beta$.
	The differential $\check \del$ is zero, while $\upsilon(\fraka_i) = \fraka_{i-1}$ whenever $i > 0$, and $\upsilon$ sends $\fraka_0$ to $\alpha+\beta$.
	The latter fact follows from \cite{MOY}'s description of the moduli space from an irreducible to the reducible, where the real moduli space $M^+(\fraka_0,\alpha) \cong \mathbb{RP}^1$ on which $\upsilon$ evaluates to $1$.
	Same holds for $M^+(\fraka_0,\beta)$.
	See Figure~\ref{fig:237} for the action of $\check{\del}$.
	To describe the ``tilde'' complex, we take two copies of $\check C(K)$, where the first copy has generators
\[(\alpha,0), (\beta,0), \text{ and } \{(\fraka_i,0): i \in \bbZ_{\ge 0}\}.\]
The differential is given by
\begin{align*}
\tilde{\del}(\fraka_0,0) 
&= (0,\upsilon(\fraka_0))= (0,\alpha+\beta),\\
\tilde{\del}(\fraka_i,0)
&=
(0,\upsilon(\fraka_i))= (0,\fraka_{i-1}),
\end{align*}
where $i > 1$.
We record the lower triangular entry $\upsilon$ of $\tilde{\del}$ in the right hand side of Figure~\ref{fig:237}.
We see that $\ker\tilde{\del}$ is generated by the four irreducibles and $\frakb_i$ and, dividing by $\text{im} \ \tilde{\del}$ (which in particular contains $(0,\alpha+\beta)$), the resulting group is generated by $3$ irreducibles all supported on the same degree, i.e.
\[
\left|\chi(\widetilde{\HMR}_{\bullet}(K))\right| = \dim \widetilde{\HMR}_{\bullet}(K)=  3.
\]

\begin{figure}[ht]
\centering
\tikzset{every picture/.style={line width=0.75pt}} 
\begin{tikzpicture}[x=0.57pt,y=0.57pt,yscale=-1,xscale=1]
	
	\draw  [fill={rgb, 255:red, 0; green, 0; blue, 0 }  ,fill opacity=1 ] (320,285) .. controls (320,282.24) and (322.24,280) .. (325,280) .. controls (327.76,280) and (330,282.24) .. (330,285) .. controls (330,287.76) and (327.76,290) .. (325,290) .. controls (322.24,290) and (320,287.76) .. (320,285) -- cycle ;
	\draw  [fill={rgb, 255:red, 0; green, 0; blue, 0 }  ,fill opacity=1 ] (410,285) .. controls (410,282.24) and (412.24,280) .. (415,280) .. controls (417.76,280) and (420,282.24) .. (420,285) .. controls (420,287.76) and (417.76,290) .. (415,290) .. controls (412.24,290) and (410,287.76) .. (410,285) -- cycle ;
	\draw  [fill={rgb, 255:red, 0; green, 0; blue, 0 }  ,fill opacity=1 ] (370,235) .. controls (370,232.24) and (372.24,230) .. (375,230) .. controls (377.76,230) and (380,232.24) .. (380,235) .. controls (380,237.76) and (377.76,240) .. (375,240) .. controls (372.24,240) and (370,237.76) .. (370,235) -- cycle ;
	\draw  [fill={rgb, 255:red, 0; green, 0; blue, 0 }  ,fill opacity=1 ] (550,285) .. controls (550,282.24) and (552.24,280) .. (555,280) .. controls (557.76,280) and (560,282.24) .. (560,285) .. controls (560,287.76) and (557.76,290) .. (555,290) .. controls (552.24,290) and (550,287.76) .. (550,285) -- cycle ;
	\draw  [fill={rgb, 255:red, 0; green, 0; blue, 0 }  ,fill opacity=1 ] (660,285) .. controls (660,282.24) and (662.24,280) .. (665,280) .. controls (667.76,280) and (670,282.24) .. (670,285) .. controls (670,287.76) and (667.76,290) .. (665,290) .. controls (662.24,290) and (660,287.76) .. (660,285) -- cycle ;
	\draw  [fill={rgb, 255:red, 0; green, 0; blue, 0 }  ,fill opacity=1 ] (600,235) .. controls (600,232.24) and (602.24,230) .. (605,230) .. controls (607.76,230) and (610,232.24) .. (610,235) .. controls (610,237.76) and (607.76,240) .. (605,240) .. controls (602.24,240) and (600,237.76) .. (600,235) -- cycle ;
	\draw  [fill={rgb, 255:red, 0; green, 0; blue, 0 }  ,fill opacity=1 ] (370,185) .. controls (370,182.24) and (372.24,180) .. (375,180) .. controls (377.76,180) and (380,182.24) .. (380,185) .. controls (380,187.76) and (377.76,190) .. (375,190) .. controls (372.24,190) and (370,187.76) .. (370,185) -- cycle ;
	\draw  [fill={rgb, 255:red, 0; green, 0; blue, 0 }  ,fill opacity=1 ] (370,135) .. controls (370,132.24) and (372.24,130) .. (375,130) .. controls (377.76,130) and (380,132.24) .. (380,135) .. controls (380,137.76) and (377.76,140) .. (375,140) .. controls (372.24,140) and (370,137.76) .. (370,135) -- cycle ;
	\draw  [fill={rgb, 255:red, 0; green, 0; blue, 0 }  ,fill opacity=1 ] (370,85) .. controls (370,82.24) and (372.24,80) .. (375,80) .. controls (377.76,80) and (380,82.24) .. (380,85) .. controls (380,87.76) and (377.76,90) .. (375,90) .. controls (372.24,90) and (370,87.76) .. (370,85) -- cycle ;
	\draw  [fill={rgb, 255:red, 0; green, 0; blue, 0 }  ,fill opacity=1 ] (600,135) .. controls (600,132.24) and (602.24,130) .. (605,130) .. controls (607.76,130) and (610,132.24) .. (610,135) .. controls (610,137.76) and (607.76,140) .. (605,140) .. controls (602.24,140) and (600,137.76) .. (600,135) -- cycle ;
	\draw  [fill={rgb, 255:red, 0; green, 0; blue, 0 }  ,fill opacity=1 ] (600,85) .. controls (600,82.24) and (602.24,80) .. (605,80) .. controls (607.76,80) and (610,82.24) .. (610,85) .. controls (610,87.76) and (607.76,90) .. (605,90) .. controls (602.24,90) and (600,87.76) .. (600,85) -- cycle ;
	\draw  [fill={rgb, 255:red, 0; green, 0; blue, 0 }  ,fill opacity=1 ] (600,185) .. controls (600,182.24) and (602.24,180) .. (605,180) .. controls (607.76,180) and (610,182.24) .. (610,185) .. controls (610,187.76) and (607.76,190) .. (605,190) .. controls (602.24,190) and (600,187.76) .. (600,185) -- cycle ;
	\draw  [fill={rgb, 255:red, 0; green, 0; blue, 0 }  ,fill opacity=1 ] (40,285) .. controls (40,282.24) and (42.24,280) .. (45,280) .. controls (47.76,280) and (50,282.24) .. (50,285) .. controls (50,287.76) and (47.76,290) .. (45,290) .. controls (42.24,290) and (40,287.76) .. (40,285) -- cycle ;
	\draw  [fill={rgb, 255:red, 0; green, 0; blue, 0 }  ,fill opacity=1 ] (140,285) .. controls (140,282.24) and (142.24,280) .. (145,280) .. controls (147.76,280) and (150,282.24) .. (150,285) .. controls (150,287.76) and (147.76,290) .. (145,290) .. controls (142.24,290) and (140,287.76) .. (140,285) -- cycle ;
	\draw  [fill={rgb, 255:red, 0; green, 0; blue, 0 }  ,fill opacity=1 ] (90,234.84) .. controls (90.09,232.08) and (92.4,229.91) .. (95.16,230) .. controls (97.92,230.09) and (100.09,232.4) .. (100,235.16) .. controls (99.91,237.92) and (97.6,240.09) .. (94.84,240) .. controls (92.08,239.91) and (89.91,237.6) .. (90,234.84) -- cycle ;
	\draw  [fill={rgb, 255:red, 0; green, 0; blue, 0 }  ,fill opacity=1 ] (90,185) .. controls (90,182.24) and (92.24,180) .. (95,180) .. controls (97.76,180) and (100,182.24) .. (100,185) .. controls (100,187.76) and (97.76,190) .. (95,190) .. controls (92.24,190) and (90,187.76) .. (90,185) -- cycle ;
	\draw  [fill={rgb, 255:red, 0; green, 0; blue, 0 }  ,fill opacity=1 ] (90,135) .. controls (90,132.24) and (92.24,130) .. (95,130) .. controls (97.76,130) and (100,132.24) .. (100,135) .. controls (100,137.76) and (97.76,140) .. (95,140) .. controls (92.24,140) and (90,137.76) .. (90,135) -- cycle ;
	\draw  [fill={rgb, 255:red, 0; green, 0; blue, 0 }  ,fill opacity=1 ] (90,85) .. controls (90,82.24) and (92.24,80) .. (95,80) .. controls (97.76,80) and (100,82.24) .. (100,85) .. controls (100,87.76) and (97.76,90) .. (95,90) .. controls (92.24,90) and (90,87.76) .. (90,85) -- cycle ;
	\draw    (230,0) -- (230,310) ;
	\draw    (90,240) -- (52.12,277.88) ;
	\draw [shift={(50,280)}, rotate = 315] [fill={rgb, 255:red, 0; green, 0; blue, 0 }  ][line width=0.08]  [draw opacity=0] (10.72,-5.15) -- (0,0) -- (10.72,5.15) -- (7.12,0) -- cycle    ;
	\draw    (95,185) -- (95.15,227) ;
	\draw [shift={(95.16,230)}, rotate = 269.8] [fill={rgb, 255:red, 0; green, 0; blue, 0 }  ][line width=0.08]  [draw opacity=0] (10.72,-5.15) -- (0,0) -- (10.72,5.15) -- (7.12,0) -- cycle    ;
	\draw    (95,140) -- (95,177) ;
	\draw [shift={(95,180)}, rotate = 270] [fill={rgb, 255:red, 0; green, 0; blue, 0 }  ][line width=0.08]  [draw opacity=0] (10.72,-5.15) -- (0,0) -- (10.72,5.15) -- (7.12,0) -- cycle    ;
	\draw    (95,90) -- (95,127) ;
	\draw [shift={(95,130)}, rotate = 270] [fill={rgb, 255:red, 0; green, 0; blue, 0 }  ][line width=0.08]  [draw opacity=0] (10.72,-5.15) -- (0,0) -- (10.72,5.15) -- (7.12,0) -- cycle    ;
	\draw    (95,40) -- (95,77) ;
	\draw [shift={(95,80)}, rotate = 270] [fill={rgb, 255:red, 0; green, 0; blue, 0 }  ][line width=0.08]  [draw opacity=0] (10.72,-5.15) -- (0,0) -- (10.72,5.15) -- (7.12,0) -- cycle    ;
	\draw [color={rgb, 255:red, 208; green, 2; blue, 27 }  ,draw opacity=1 ]   (380,35) -- (597.07,84.34) ;
	\draw [shift={(600,85)}, rotate = 192.8] [fill={rgb, 255:red, 208; green, 2; blue, 27 }  ,fill opacity=1 ][line width=0.08]  [draw opacity=0] (10.72,-5.15) -- (0,0) -- (10.72,5.15) -- (7.12,0) -- cycle    ;
	\draw [color={rgb, 255:red, 208; green, 2; blue, 27 }  ,draw opacity=1 ]   (380,85) -- (597.07,134.34) ;
	\draw [shift={(600,135)}, rotate = 192.8] [fill={rgb, 255:red, 208; green, 2; blue, 27 }  ,fill opacity=1 ][line width=0.08]  [draw opacity=0] (10.72,-5.15) -- (0,0) -- (10.72,5.15) -- (7.12,0) -- cycle    ;
	\draw [color={rgb, 255:red, 208; green, 2; blue, 27 }  ,draw opacity=1 ]   (380,135) -- (597.07,184.34) ;
	\draw [shift={(600,185)}, rotate = 192.8] [fill={rgb, 255:red, 208; green, 2; blue, 27 }  ,fill opacity=1 ][line width=0.08]  [draw opacity=0] (10.72,-5.15) -- (0,0) -- (10.72,5.15) -- (7.12,0) -- cycle    ;
	\draw [color={rgb, 255:red, 208; green, 2; blue, 27 }  ,draw opacity=1 ]   (380,235) .. controls (459.68,253.24) and (629.37,279.16) .. (657.27,284.45) ;
	\draw [shift={(660,285)}, rotate = 192.65] [fill={rgb, 255:red, 208; green, 2; blue, 27 }  ,fill opacity=1 ][line width=0.08]  [draw opacity=0] (10.72,-5.15) -- (0,0) -- (10.72,5.15) -- (7.12,0) -- cycle    ;
	\draw    (100,240) -- (137.88,277.88) ;
	\draw [shift={(140,280)}, rotate = 225] [fill={rgb, 255:red, 0; green, 0; blue, 0 }  ][line width=0.08]  [draw opacity=0] (10.72,-5.15) -- (0,0) -- (10.72,5.15) -- (7.12,0) -- cycle    ;
	\draw [color={rgb, 255:red, 208; green, 2; blue, 27 }  ,draw opacity=1 ]   (380,185) -- (597.07,234.34) ;
	\draw [shift={(600,235)}, rotate = 192.8] [fill={rgb, 255:red, 208; green, 2; blue, 27 }  ,fill opacity=1 ][line width=0.08]  [draw opacity=0] (10.72,-5.15) -- (0,0) -- (10.72,5.15) -- (7.12,0) -- cycle    ;
	\draw [color={rgb, 255:red, 208; green, 2; blue, 27 }  ,draw opacity=1 ]   (380,235) -- (547.12,284.15) ;
	\draw [shift={(550,285)}, rotate = 196.39] [fill={rgb, 255:red, 208; green, 2; blue, 27 }  ,fill opacity=1 ][line width=0.08]  [draw opacity=0] (10.72,-5.15) -- (0,0) -- (10.72,5.15) -- (7.12,0) -- cycle    ;
	
	\draw (324,62.4) node [anchor=north west][inner sep=0.75pt]    {$( \fraka _{3} ,0)$};
	\draw (363,22) node [anchor=north west][inner sep=0.75pt]   [align=left] {$\displaystyle \vdots $};
	\draw (280,262.4) node [anchor=north west][inner sep=0.75pt]    {$( \alpha ,0)$};
	\draw (557,283.4) node [anchor=north west][inner sep=0.75pt]    {$( 0,\alpha )$};
	\draw (371,262.4) node [anchor=north west][inner sep=0.75pt]    {$( \beta ,0)$};
	\draw (667,283.4) node [anchor=north west][inner sep=0.75pt]    {$( \beta ,0)$};
	\draw (101,212.4) node [anchor=north west][inner sep=0.75pt]    {$\fraka _{0}$};
	\draw (101,172.4) node [anchor=north west][inner sep=0.75pt]    {$\fraka _{1}$};
	\draw (101,122.4) node [anchor=north west][inner sep=0.75pt]    {$\fraka _{2}$};
	\draw (101,72.4) node [anchor=north west][inner sep=0.75pt]    {$\fraka _{3}$};
	\draw (91,20) node [anchor=north west][inner sep=0.75pt]   [align=left] {$\displaystyle \vdots $};
	\draw (21,260.4) node [anchor=north west][inner sep=0.75pt]    {$\alpha _{\fraka }$};
	\draw (151,260.4) node [anchor=north west][inner sep=0.75pt]    {$\beta _{\fraka }$};
	\draw (593,22) node [anchor=north west][inner sep=0.75pt]   [align=left] {$\displaystyle \vdots $};
	\draw (324,110.4) node [anchor=north west][inner sep=0.75pt]    {$( \fraka _{2} ,0)$};
	\draw (324,160.4) node [anchor=north west][inner sep=0.75pt]    {$( \fraka _{1} ,0)$};
	\draw (324,212.4) node [anchor=north west][inner sep=0.75pt]    {$( \fraka _{0} ,0)$};
	\draw (611,82.4) node [anchor=north west][inner sep=0.75pt]    {$( 0,\fraka _{3})$};
	\draw (611,130.4) node [anchor=north west][inner sep=0.75pt]    {$( 0,\fraka _{2})$};
	\draw (611,182.4) node [anchor=north west][inner sep=0.75pt]    {$( 0,\fraka _{1})$};
	\draw (611,230.4) node [anchor=north west][inner sep=0.75pt]    {$( 0,\fraka _{0})$};
\end{tikzpicture}
\caption{Action of $\upsilon$ (left) and the $\widetilde{\HMR}$-differential (right).}
\label{fig:237}
\end{figure}
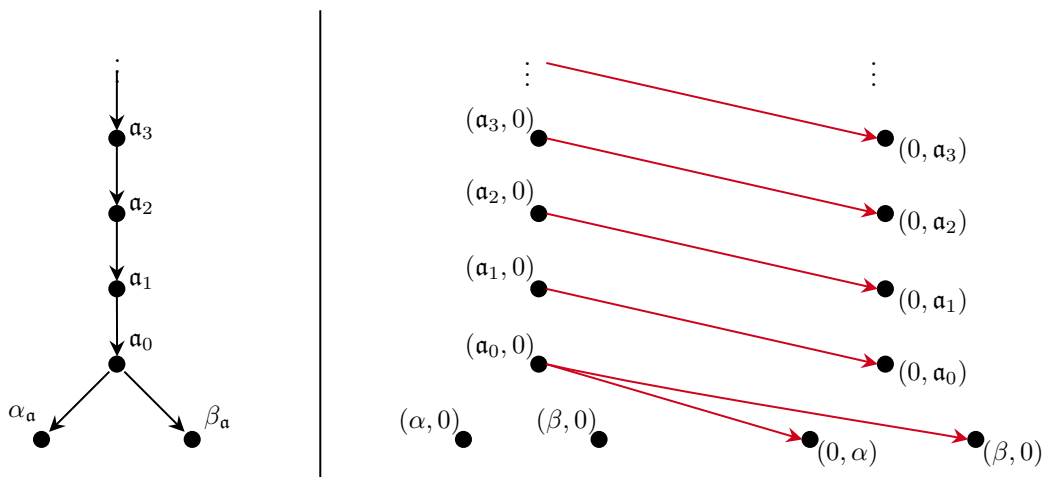
\end{exmp}
\begin{rem}
The monopole Floer homology of $-\Sigma(2,3,7)$ admits a similar description.
The key difference is that the $U$ mapping cone 	is
\[\widetilde{\textit{C}}_{\HM}=\widecheck{\textit{C}}_{\HM} \oplus \widecheck{\textit{C}}_{\HM}\langle1\rangle,\]
where $\la 1 \ra$ indicate a degree shift by one, as now $\deg U=-2$.
Since in the ordinary case $\{(\alpha,0),(\beta,0)\}$ and $\{(0,\alpha),(0,\beta)\}$ have different mod-two degrees, the resulting Euler characteristic is $1$.
\end{rem}

\begin{exmp}
	Let $K$ be the torus knot $T(p,q)$  where $p,q$ are odd.
	As observed in \cite[Section~14.5]{ljk2022}, $\check C(K)$ can be chosen to consist of only reducibles and $\widetilde{\HMR}_{\bullet}(K) \cong \widetilde{\HMR}_{\bullet}(\U_1)$, where we choose the unique real \spinc structure on $\dbcv_2(K)=\Sigma(2,p,q)$ and the unique torsion real \spinc structure on $\dbcv_2(\U_1)\cong S^3$.
	It follows that $\widetilde{\HMR}_{\bullet}(K) \cong \bbF_2$, and $|\chi(\widetilde{\HMR}_{\bullet}(K))|=1$.
	On the other hand, $\widetilde{\HM}_{\bullet}(\Sigma(2,p,q))$ has irreducible generators and higher rank. Still, $|\chi(\widetilde{\HM}_{\bullet}(\Sigma(2,p,q)))|=1$.
\end{exmp}

\begin{exmp}
	Let $K$ be a $2$-bridge link.
	Then $\dbcv_2(K)$ is diffeomorphic to a lens space and there  exists an involution invariant Riemmannien metric on $\dbcv_2(K)$ having positive scalar curvature (see \cite[Section~14.2]{ljk2022}).
	In particular, there exists no irreducible Seiberg-Witten solutions, and by the arguments above, for each real \spinc structure $(\fraks,\uptau)$:
	\[\dim_{\bbF_2}\widetilde{\HMR}_{\bullet}(K,\fraks, \uptau) = |\chi_{\HMR}|(K,\fraks,\uptau) = 1.\]
	Moreover, the total rank is equal to the number of \spinc structures, which is equal to the determinant of $K$.
\end{exmp}

\begin{rem}
\label{rem:noskein}
Let $K_2 = P(-2,3,7)$ and consider the crossing in the circle of Figure~\ref{fig:skein237}. 
The $1$-resolution $K_1$ is equal to the $7_2$ knot, which is a twist knot with $5$ half-twists; 
the $0$-resolution $K_0$ is the two-component torus link $L10a18$.
Both $K_1$ and $K_0$ are $2$-bridge with determinants equal to $|\det K_1|= 11$ and $|\det K_0|=10$, respectively.
It follows that
\[
\widetilde{\HMR}_{\bullet}(K_2) \cong \bbF_2^{\oplus 3}, \quad
\widetilde{\HMR}_{\bullet}(K_1) \cong \bigoplus_{\fraks_i; 1 \le i \le 11} \bbF_2, \quad
\widetilde{\HMR}_{\bullet}(K_0)\cong 
\bigoplus_{\frakt_j; 1 \le j \le 10}\bbF_2,
\]
where $\fraks_i$ is a \spinc structure on $\dbcv_2(K_1)$, for $1 \le i \le 11$, and $\frakt_j$ is a \spinc structure on $\dbcv_2(K_0)$ for $1 \le j \le 10$.
\end{rem}

\begin{figure}[ht]
\centering

\tikzset{every picture/.style={line width=0.75pt}} 

\tikzset{every picture/.style={line width=0.75pt}} 

\begin{tikzpicture}[x=0.55pt,y=0.55pt,yscale=-1,xscale=1]
	
	\draw    (25.18,39.41) .. controls (63.53,10.27) and (154.84,14.64) .. (192.47,42.33) ;
	\draw    (24.46,132.17) .. controls (25.18,147.23) and (43.4,164.23) .. (47.71,175.4) ;
	\draw    (52.26,132.66) .. controls (52.02,146.26) and (44.35,151.6) .. (38.6,156.94) ;
	\draw    (31.89,161.31) .. controls (28.3,166.17) and (24.7,166.66) .. (23.74,171.03) ;
	\draw    (23.74,171.03) .. controls (18.47,183.17) and (48.67,210.37) .. (51.07,225.42) ;
	\draw    (47.71,175.4) .. controls (51.07,184.14) and (42.92,191.43) .. (37.16,196.77) ;
	\draw    (33.09,200.17) .. controls (29.5,205.03) and (24.7,211.34) .. (24.7,226.88) ;
	\draw    (122.49,116.63) .. controls (122.25,130.23) and (101.4,148.69) .. (95.64,154.03) ;
	\draw    (95.16,115.66) .. controls (95.64,124.89) and (106.19,132.17) .. (108.59,135.57) ;
	\draw    (114.34,140.92) .. controls (115.78,147.23) and (120.09,145.77) .. (123.44,153.54) ;
	\draw    (123.44,153.54) .. controls (123.2,167.14) and (105.23,187.06) .. (99.48,192.4) ;
	\draw    (99.48,192.4) .. controls (99.96,201.63) and (110.5,209.4) .. (112.9,212.8) ;
	
	\draw    (96.12,153.54) .. controls (96.6,162.77) and (107.15,170.06) .. (109.54,173.46) ;
	\draw    (115.3,178.8) .. controls (116.73,185.11) and (121.05,183.66) .. (124.4,191.43) ;
	\draw    (124.4,191.43) .. controls (124.16,205.03) and (102.83,225.42) .. (97.08,230.77) ;
	\draw    (115.78,217.65) .. controls (118.17,222.51) and (121.53,222.51) .. (124.88,230.28) ;
	\draw    (191.51,61.27) .. controls (191.27,74.87) and (170.42,93.32) .. (164.67,98.66) ;
	\draw    (158.44,48.64) .. controls (165.15,55.44) and (175.21,76.81) .. (177.61,80.21) ;
	\draw    (183.36,85.55) .. controls (184.8,91.86) and (189.11,90.41) .. (192.47,98.18) ;
	\draw    (192.47,98.18) .. controls (192.23,111.78) and (171.38,135.09) .. (165.63,140.43) ;
	\draw    (164.67,98.66) .. controls (165.15,107.89) and (176.17,118.58) .. (178.57,121.98) ;
	\draw    (184.32,127.32) .. controls (185.76,133.63) and (190.07,132.17) .. (193.43,139.95) ;
	\draw    (193.43,139.95) .. controls (193.19,153.54) and (174.73,168.11) .. (170.9,181.71) ;
	\draw    (165.63,140.43) .. controls (166.1,149.66) and (176.65,158.89) .. (179.05,162.29) ;
	\draw    (184.8,167.63) .. controls (187.67,170.06) and (190.55,172.49) .. (193.91,180.26) ;
	\draw    (193.91,180.26) .. controls (193.67,193.85) and (180.96,206.97) .. (168.98,219.11) ;
	\draw    (170.9,181.71) .. controls (171.38,190.94) and (179.53,197.25) .. (181.92,200.65) ;
	\draw    (187.67,206) .. controls (191.99,208.42) and (193.43,210.85) .. (196.78,218.62) ;
	\draw    (196.78,218.62) .. controls (196.54,232.22) and (175.21,251.65) .. (169.46,256.99) ;
	\draw    (168.98,219.11) .. controls (163.23,228.34) and (180.01,235.14) .. (182.4,238.54) ;
	\draw    (188.15,243.88) .. controls (189.59,250.19) and (193.91,248.74) .. (197.26,256.51) ;
	\draw    (197.26,256.51) .. controls (197.02,270.11) and (174.73,290.5) .. (168.98,295.85) ;
	\draw    (169.46,256.99) .. controls (169.94,266.22) and (179.53,273.99) .. (181.92,277.39) ;
	\draw    (187.67,282.73) .. controls (189.11,289.05) and (193.43,287.59) .. (196.78,295.36) ;
	\draw    (196.78,295.36) .. controls (196.54,308.96) and (175.69,328.39) .. (169.94,333.73) ;
	\draw    (169.46,295.36) .. controls (169.94,304.59) and (180.48,311.87) .. (182.88,315.27) ;
	\draw    (187.67,319.16) .. controls (189.11,325.47) and (193.43,324.01) .. (196.78,331.79) ;
	\draw    (192.47,42.33) .. controls (193.43,44.75) and (194.86,52.04) .. (191.51,61.27) ;
	\draw    (48.43,51.07) .. controls (63.53,36.01) and (79.11,35.53) .. (95.16,50.1) ;
	\draw    (111.7,49.61) .. controls (126.8,34.55) and (142.38,34.07) .. (158.44,48.64) ;
	\draw    (25.18,39.41) .. controls (12.24,45.24) and (8.17,87.49) .. (24.46,132.17) ;
	\draw    (48.43,51.07) .. controls (34.77,87.01) and (57.78,93.32) .. (52.26,132.66) ;
	\draw    (95.16,50.1) .. controls (102.35,76.32) and (88.93,84.09) .. (95.16,115.66) ;
	\draw    (111.7,49.61) .. controls (101.4,79.24) and (125.36,99.63) .. (122.49,116.63) ;
	\draw    (29.5,345.87) .. controls (60.17,391.52) and (177.13,373.55) .. (196.78,348.78) ;
	\draw    (196.78,331.79) .. controls (205.89,340.04) and (197.26,349.76) .. (196.78,348.78) ;
	\draw    (124.88,230.28) .. controls (141.66,258.45) and (105.23,258.93) .. (123.2,334.7) ;
	\draw    (55.14,333.73) .. controls (68.8,355.1) and (94.68,352.18) .. (101.87,332.76) ;
	\draw    (123.2,334.7) .. controls (136.87,356.07) and (162.75,353.15) .. (169.94,333.73) ;
	\draw    (97.08,230.77) .. controls (80.3,261.36) and (110.02,252.62) .. (101.87,332.76) ;
	\draw    (24.7,226.88) .. controls (42.44,275.45) and (1.22,257.96) .. (29.5,345.87) ;
	\draw    (51.07,225.42) .. controls (68.8,273.99) and (26.86,245.82) .. (55.14,333.73) ;
	\draw  [color={rgb, 255:red, 230; green, 22; blue, 22 }  ,draw opacity=1 ] (12,153.79) .. controls (12,139.71) and (23.27,128.29) .. (37.16,128.29) .. controls (51.06,128.29) and (62.33,139.71) .. (62.33,153.79) .. controls (62.33,167.87) and (51.06,179.28) .. (37.16,179.28) .. controls (23.27,179.28) and (12,167.87) .. (12,153.79) -- cycle ;
	\draw    (250.09,40.88) .. controls (288.43,11.74) and (379.75,16.11) .. (417.37,43.8) ;
	\draw    (249.37,133.65) .. controls (257.75,138.5) and (264.75,155.5) .. (256.8,162.79) ;
	\draw    (277.17,134.13) .. controls (264.75,145) and (265.75,155) .. (272.62,176.87) ;
	\draw    (256.8,162.79) .. controls (253.2,167.64) and (249.61,168.13) .. (248.65,172.5) ;
	\draw    (248.65,172.5) .. controls (243.38,184.64) and (273.57,211.84) .. (275.97,226.89) ;
	\draw    (272.62,176.87) .. controls (275.97,185.61) and (267.82,192.9) .. (262.07,198.24) ;
	\draw    (258,201.64) .. controls (254.4,206.5) and (249.61,212.81) .. (249.61,228.35) ;
	\draw    (347.39,118.1) .. controls (347.15,131.7) and (326.3,150.16) .. (320.55,155.5) ;
	\draw    (320.07,117.13) .. controls (320.55,126.36) and (331.09,133.65) .. (333.49,137.05) ;
	\draw    (339.24,142.39) .. controls (340.68,148.7) and (344.99,147.24) .. (348.35,155.02) ;
	\draw    (348.35,155.02) .. controls (348.11,168.61) and (330.14,188.53) .. (324.38,193.87) ;
	\draw    (324.38,193.87) .. controls (324.86,203.1) and (335.41,210.87) .. (337.8,214.27) ;
	
	\draw    (321.03,155.02) .. controls (321.51,164.24) and (332.05,171.53) .. (334.45,174.93) ;
	\draw    (340.2,180.27) .. controls (341.64,186.58) and (345.95,185.13) .. (349.31,192.9) ;
	\draw    (349.31,192.9) .. controls (349.07,206.5) and (327.74,226.89) .. (321.99,232.24) ;
	\draw    (340.68,219.12) .. controls (343.08,223.98) and (346.43,223.98) .. (349.79,231.75) ;
	\draw    (416.42,62.74) .. controls (416.18,76.34) and (395.32,94.79) .. (389.57,100.13) ;
	\draw    (383.34,50.11) .. controls (390.05,56.91) and (400.12,78.28) .. (402.51,81.68) ;
	\draw    (408.27,87.02) .. controls (409.7,93.34) and (414.02,91.88) .. (417.37,99.65) ;
	\draw    (417.37,99.65) .. controls (417.13,113.25) and (396.28,136.56) .. (390.53,141.9) ;
	\draw    (389.57,100.13) .. controls (390.05,109.36) and (401.08,120.05) .. (403.47,123.45) ;
	\draw    (409.23,128.79) .. controls (410.66,135.1) and (414.98,133.65) .. (418.33,141.42) ;
	\draw    (418.33,141.42) .. controls (418.09,155.02) and (399.64,169.59) .. (395.8,183.18) ;
	\draw    (390.53,141.9) .. controls (391.01,151.13) and (401.56,160.36) .. (403.95,163.76) ;
	\draw    (409.7,169.1) .. controls (412.58,171.53) and (415.46,173.96) .. (418.81,181.73) ;
	\draw    (418.81,181.73) .. controls (418.57,195.33) and (405.87,208.44) .. (393.89,220.58) ;
	\draw    (395.8,183.18) .. controls (396.28,192.41) and (404.43,198.73) .. (406.83,202.13) ;
	\draw    (412.58,207.47) .. controls (416.89,209.9) and (418.33,212.32) .. (421.69,220.1) ;
	\draw    (421.69,220.1) .. controls (421.45,233.69) and (400.12,253.12) .. (394.37,258.46) ;
	\draw    (393.89,220.58) .. controls (388.13,229.81) and (404.91,236.61) .. (407.31,240.01) ;
	\draw    (413.06,245.35) .. controls (414.5,251.66) and (418.81,250.21) .. (422.17,257.98) ;
	\draw    (422.17,257.98) .. controls (421.93,271.58) and (399.64,291.97) .. (393.89,297.32) ;
	\draw    (394.37,258.46) .. controls (394.85,267.69) and (404.43,275.46) .. (406.83,278.86) ;
	\draw    (412.58,284.2) .. controls (414.02,290.52) and (418.33,289.06) .. (421.69,296.83) ;
	\draw    (421.69,296.83) .. controls (421.45,310.43) and (400.6,329.86) .. (394.85,335.2) ;
	\draw    (394.37,296.83) .. controls (394.85,306.06) and (405.39,313.34) .. (407.79,316.74) ;
	\draw    (412.58,320.63) .. controls (414.02,326.94) and (418.33,325.49) .. (421.69,333.26) ;
	\draw    (417.37,43.8) .. controls (418.33,46.23) and (419.77,53.51) .. (416.42,62.74) ;
	\draw    (273.33,52.54) .. controls (288.43,37.48) and (304.01,37) .. (320.07,51.57) ;
	\draw    (336.61,51.08) .. controls (351.71,36.03) and (367.28,35.54) .. (383.34,50.11) ;
	\draw    (250.09,40.88) .. controls (237.15,46.71) and (233.07,88.96) .. (249.37,133.65) ;
	\draw    (273.33,52.54) .. controls (259.67,88.48) and (282.68,94.79) .. (277.17,134.13) ;
	\draw    (320.07,51.57) .. controls (327.26,77.79) and (313.84,85.56) .. (320.07,117.13) ;
	\draw    (336.61,51.08) .. controls (326.3,80.71) and (350.27,101.11) .. (347.39,118.1) ;
	\draw    (254.4,347.34) .. controls (285.08,392.99) and (402.04,375.02) .. (421.69,350.26) ;
	\draw    (421.69,333.26) .. controls (430.8,341.51) and (422.17,351.23) .. (421.69,350.26) ;
	\draw    (349.79,231.75) .. controls (366.56,259.92) and (330.14,260.41) .. (348.11,336.17) ;
	\draw    (280.05,335.2) .. controls (293.71,356.57) and (319.59,353.65) .. (326.78,334.23) ;
	\draw    (348.11,336.17) .. controls (361.77,357.54) and (387.66,354.63) .. (394.85,335.2) ;
	\draw    (321.99,232.24) .. controls (305.21,262.83) and (334.93,254.09) .. (326.78,334.23) ;
	\draw    (249.61,228.35) .. controls (267.34,276.92) and (226.12,259.43) .. (254.4,347.34) ;
	\draw    (275.97,226.89) .. controls (293.71,275.46) and (251.77,247.29) .. (280.05,335.2) ;
	\draw    (479.49,40.88) .. controls (517.84,11.74) and (609.15,16.11) .. (646.78,43.8) ;
	\draw    (506.58,134.13) .. controls (506.34,147.73) and (487.25,151) .. (478.77,133.65) ;
	\draw    (502.02,176.87) .. controls (495.75,152.5) and (479.01,168.13) .. (478.06,172.5) ;
	\draw    (478.06,172.5) .. controls (472.78,184.64) and (502.98,211.84) .. (505.38,226.89) ;
	\draw    (502.02,176.87) .. controls (505.38,185.61) and (497.23,192.9) .. (491.48,198.24) ;
	\draw    (487.4,201.64) .. controls (483.81,206.5) and (479.01,212.81) .. (479.01,228.35) ;
	\draw    (576.8,118.1) .. controls (576.56,131.7) and (555.71,150.16) .. (549.95,155.5) ;
	\draw    (549.48,117.13) .. controls (549.95,126.36) and (560.5,133.65) .. (562.9,137.05) ;
	\draw    (568.65,142.39) .. controls (570.09,148.7) and (574.4,147.24) .. (577.76,155.02) ;
	\draw    (577.76,155.02) .. controls (577.52,168.61) and (559.54,188.53) .. (553.79,193.87) ;
	\draw    (553.79,193.87) .. controls (554.27,203.1) and (564.81,210.87) .. (567.21,214.27) ;
	
	\draw    (550.43,155.02) .. controls (550.91,164.24) and (561.46,171.53) .. (563.86,174.93) ;
	\draw    (569.61,180.27) .. controls (571.05,186.58) and (575.36,185.13) .. (578.71,192.9) ;
	\draw    (578.71,192.9) .. controls (578.47,206.5) and (557.14,226.89) .. (551.39,232.24) ;
	\draw    (570.09,219.12) .. controls (572.48,223.98) and (575.84,223.98) .. (579.19,231.75) ;
	\draw    (645.82,62.74) .. controls (645.58,76.34) and (624.73,94.79) .. (618.98,100.13) ;
	\draw    (612.75,50.11) .. controls (619.46,56.91) and (629.52,78.28) .. (631.92,81.68) ;
	\draw    (637.67,87.02) .. controls (639.11,93.34) and (643.42,91.88) .. (646.78,99.65) ;
	\draw    (646.78,99.65) .. controls (646.54,113.25) and (625.69,136.56) .. (619.94,141.9) ;
	\draw    (618.98,100.13) .. controls (619.46,109.36) and (630.48,120.05) .. (632.88,123.45) ;
	\draw    (638.63,128.79) .. controls (640.07,135.1) and (644.38,133.65) .. (647.74,141.42) ;
	\draw    (647.74,141.42) .. controls (647.5,155.02) and (629.04,169.59) .. (625.21,183.18) ;
	\draw    (619.94,141.9) .. controls (620.42,151.13) and (630.96,160.36) .. (633.36,163.76) ;
	\draw    (639.11,169.1) .. controls (641.99,171.53) and (644.86,173.96) .. (648.22,181.73) ;
	\draw    (648.22,181.73) .. controls (647.98,195.33) and (635.28,208.44) .. (623.29,220.58) ;
	\draw    (625.21,183.18) .. controls (625.69,192.41) and (633.84,198.73) .. (636.23,202.13) ;
	\draw    (641.99,207.47) .. controls (646.3,209.9) and (647.74,212.32) .. (651.09,220.1) ;
	\draw    (651.09,220.1) .. controls (650.85,233.69) and (629.52,253.12) .. (623.77,258.46) ;
	\draw    (623.29,220.58) .. controls (617.54,229.81) and (634.32,236.61) .. (636.71,240.01) ;
	\draw    (642.47,245.35) .. controls (643.9,251.66) and (648.22,250.21) .. (651.57,257.98) ;
	\draw    (651.57,257.98) .. controls (651.33,271.58) and (629.04,291.97) .. (623.29,297.32) ;
	\draw    (623.77,258.46) .. controls (624.25,267.69) and (633.84,275.46) .. (636.23,278.86) ;
	\draw    (641.99,284.2) .. controls (643.42,290.52) and (647.74,289.06) .. (651.09,296.83) ;
	\draw    (651.09,296.83) .. controls (650.85,310.43) and (630,329.86) .. (624.25,335.2) ;
	\draw    (623.77,296.83) .. controls (624.25,306.06) and (634.8,313.34) .. (637.19,316.74) ;
	\draw    (641.99,320.63) .. controls (643.42,326.94) and (647.74,325.49) .. (651.09,333.26) ;
	\draw    (646.78,43.8) .. controls (647.74,46.23) and (649.18,53.51) .. (645.82,62.74) ;
	\draw    (502.74,52.54) .. controls (517.84,37.48) and (533.42,37) .. (549.48,51.57) ;
	\draw    (566.01,51.08) .. controls (581.11,36.03) and (596.69,35.54) .. (612.75,50.11) ;
	\draw    (479.49,40.88) .. controls (466.55,46.71) and (462.48,88.96) .. (478.77,133.65) ;
	\draw    (502.74,52.54) .. controls (489.08,88.48) and (512.09,94.79) .. (506.58,134.13) ;
	\draw    (549.48,51.57) .. controls (556.67,77.79) and (543.24,85.56) .. (549.48,117.13) ;
	\draw    (566.01,51.08) .. controls (555.71,80.71) and (579.67,101.11) .. (576.8,118.1) ;
	\draw    (483.81,347.34) .. controls (514.48,392.99) and (631.44,375.02) .. (651.09,350.26) ;
	\draw    (651.09,333.26) .. controls (660.2,341.51) and (651.57,351.23) .. (651.09,350.26) ;
	\draw    (579.19,231.75) .. controls (595.97,259.92) and (559.54,260.41) .. (577.52,336.17) ;
	\draw    (509.45,335.2) .. controls (523.11,356.57) and (549,353.65) .. (556.19,334.23) ;
	\draw    (577.52,336.17) .. controls (591.18,357.54) and (617.06,354.63) .. (624.25,335.2) ;
	\draw    (551.39,232.24) .. controls (534.62,262.83) and (564.33,254.09) .. (556.19,334.23) ;
	\draw    (479.01,228.35) .. controls (496.75,276.92) and (455.53,259.43) .. (483.81,347.34) ;
	\draw    (505.38,226.89) .. controls (523.11,275.46) and (481.17,247.29) .. (509.45,335.2) ;
	\draw  [color={rgb, 255:red, 230; green, 22; blue, 22 }  ,draw opacity=1 ] (466.31,155.26) .. controls (466.31,141.18) and (477.58,129.76) .. (491.48,129.76) .. controls (505.37,129.76) and (516.64,141.18) .. (516.64,155.26) .. controls (516.64,169.34) and (505.37,180.76) .. (491.48,180.76) .. controls (477.58,180.76) and (466.31,169.34) .. (466.31,155.26) -- cycle ;
	\draw  [color={rgb, 255:red, 230; green, 22; blue, 22 }  ,draw opacity=1 ] (240,147.79) .. controls (240,133.71) and (251.27,122.29) .. (265.16,122.29) .. controls (279.06,122.29) and (290.33,133.71) .. (290.33,147.79) .. controls (290.33,161.87) and (279.06,173.28) .. (265.16,173.28) .. controls (251.27,173.28) and (240,161.87) .. (240,147.79) -- cycle ;
	
	\draw (52,388.9) node [anchor=north west][inner sep=0.75pt]    {$K_{2} =P( -2,3,7)$};
	\draw (311.5,389.4) node [anchor=north west][inner sep=0.75pt]    {$K_{1} =\ 7_{2}$};
	\draw (528,388.9) node [anchor=north west][inner sep=0.75pt]    {$K_{0} =\ L10a18$};
\end{tikzpicture}

\caption{A skein relation for $P(-2,3,7)$}
\label{fig:skein237}
\end{figure}
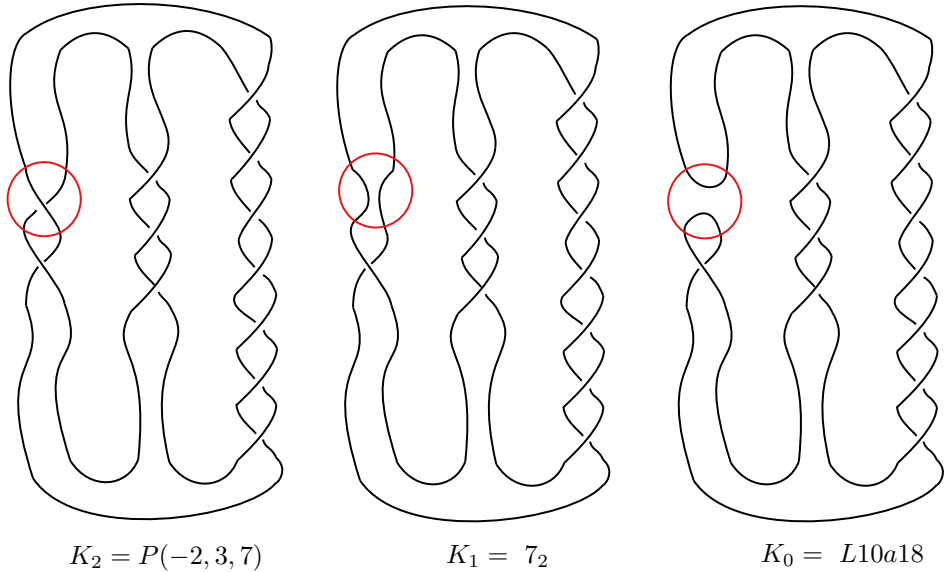

\section{Khovanov homology}
\label{subsec:odd_Kh}
The exposition of Khovanov homology in this section follows closely that of Baldwin-Hedden-Lobb~\cite{BHL2019}.

Let $D$ be a diagram of $K$ in $S^2 = \bbR^2 \cup \{\infty\}$ and label the crossings of $D$ as $\{1,\dots,N\}$.
For each crossing $j$, one can perform a $0$-resolution or a $1$-resolution as in Figure~\ref{fig:trianglemirror210}.
\begin{figure}[h!]
	\centering

\tikzset{every picture/.style={line width=0.75pt}} 

\begin{tikzpicture}[x=0.35pt,y=0.35pt,yscale=-1,xscale=1]
	
	\draw   (11.33,102.56) .. controls (11.33,52.17) and (52.57,11.33) .. (103.44,11.33) .. controls (154.32,11.33) and (195.56,52.17) .. (195.56,102.56) .. controls (195.56,152.94) and (154.32,193.78) .. (103.44,193.78) .. controls (52.57,193.78) and (11.33,152.94) .. (11.33,102.56) -- cycle ;
	\draw   (235.33,101.89) .. controls (235.33,51.51) and (276.57,10.67) .. (327.44,10.67) .. controls (378.32,10.67) and (419.56,51.51) .. (419.56,101.89) .. controls (419.56,152.27) and (378.32,193.11) .. (327.44,193.11) .. controls (276.57,193.11) and (235.33,152.27) .. (235.33,101.89) -- cycle ;
	\draw   (465.33,102.56) .. controls (465.33,52.17) and (506.57,11.33) .. (557.44,11.33) .. controls (608.32,11.33) and (649.56,52.17) .. (649.56,102.56) .. controls (649.56,152.94) and (608.32,193.78) .. (557.44,193.78) .. controls (506.57,193.78) and (465.33,152.94) .. (465.33,102.56) -- cycle ;
	\draw    (39.33,37.67) -- (90.89,96.44) ;
	\draw    (104.89,111.11) -- (162.22,172.44) ;
	\draw    (162.22,33.11) -- (40.89,167.11) ;
	\draw    (272.22,30.44) .. controls (328.22,71.11) and (287.56,177.11) .. (268.22,172.44) ;
	\draw    (375.56,24.44) .. controls (320.89,75.11) and (362.22,156.44) .. (387.56,171.78) ;
	\draw    (482,52) .. controls (518.22,93.11) and (597.56,94.44) .. (628.89,45.78) ;
	\draw    (484.22,157.78) .. controls (526.44,119.56) and (568.22,117.78) .. (632.89,154.44) ;
	
	\draw (86,201.4) node [anchor=north west][inner sep=0.75pt]    {$K_{-1}$};
	\draw (310,201.4) node [anchor=north west][inner sep=0.75pt]    {$K_{0}$};
	\draw (540,201.4) node [anchor=north west][inner sep=0.75pt]    {$K_{1}$};	
\end{tikzpicture}
\caption{An unoriented skein triple in Khovanov homology.}
\label{fig:trianglemirror210}
\end{figure}
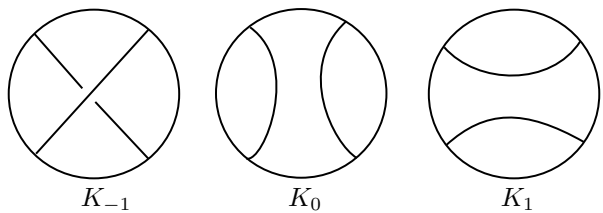
The different ways of resolving $D$ form a \emph{cube of resolutions}
parametrized by $\{0,1\}^N$: let $v_j$ be the $j$-th coordinate of $v \in \{0,1\}^N$, each vertex of the cube is a diagram $D_v$ obtained by doing $v_j$-resolution at the $j$-th crossing, for all $1 \le j \le N$.
Every $D_v$ consists of a disjoint union of circles.

Given $v \in \{0,1\}^N$, let $V(D_v)$ be the vector space generated by the components of $D_v$.
Consider the exterior algebra $\Lambda^*V(D_v)$ with a grading $\pgrade$ as follows.
The element $1 \in \Lambda^0V(D_v)$ is declared to have grading $\pgrade(1)$ equal to the number of components of $D_v$, and wedging with any of the components decreases $\pgrade$ by $2$.

We view $\{0,1\}^N$ as a subset of the lattice $\bbZ^N$.
The cube of resolution comes with a partial ordering: we write $u \le v$ if and only if $u_j \le v_j$, for all $j \in \{0,1\}$.
Moreover, we write $u <_k v$ if $u \le v$ and $|u-v|_1=k$.
Here, $|\cdot|_1$ is the $\ell^1$-norm on $\bbZ^N$, i.e. 
\[
	|w|_1 = \sum_{j=1}^N |w_j|.
\]
In other words, $v$ can be obtained from $u$ changing exactly $k$ $0$'s to $1$'s.
If $v,v'$ are such that $v <_1 v'$, we will define a map
\[
	d_{v,v'}: \Lambda^*V(D_v) \to \Lambda^*V(D_{v'}).
\]
The sum of all such maps
\begin{equation}
	\label{eqn:Kh_diff}
	d = \bigoplus_{v <_1 v'} d_{v,v'}
\end{equation}
form the differential of the \emph{Khovanov chain complex of $D$}:
\[
	\CKh(D) = \bigoplus_{v \in \{0,1\}^N} \Lambda^*V(D_v).
\]
Let $n_+,n_-$ be the number of the positive and negative crossings of $D$, respectively.
There is a (co-)homological grading $\hgrade$ on $\CKh(D)$, given by, for homogeneous $x \in \Lambda^*(D_v)$,
\[
	\hgrade(x) = v_1 + \dots + v_n - n_-,
\]
and a \emph{quantum grading $\qgrade$}:
\[
	\qgrade(x) = \pgrade(x) + \hgrade(x) + n_+ - n_-.
\]
In particular, $\CKh(D)$ is doubly graded, where the differential $d$ preserves $\qgrade$ and increases $\hgrade$ by $1$.
We write $\CKh^{i,j}(D)$ for the summand in homological grading $i$ and quantum grading $j$.
Then the \emph{Khovanov homology of $D$} is the bigraded vector space
\[
	\Kh(D) = \bigoplus_{i,j} \Kh^{i,j}(D),
\]
where $\Kh^{*,j}(D)$ is the homology of $d$ at the $j$th quantum grading, i.e.
\[
	\Kh^{i,j}(D) = H_i(\CKh^{*,j}(D),d).
\]
To define $d_{v,v'}$ when $v <_1 v'$, one considers two scenarios 
\begin{itemize}
	\itemindent=-13pt
\item (Merging) two circles $x$ and $y$ merges into one circle, or
\item (Splitting) a circle is splitted into two circles $x$ and $y$.
\end{itemize}
In the first scenario,
there is an identifcation
\[
	V(D_{v'}) \cong V(D_v)/(x+y),
\]
and we define the \emph{merge map} $d_{v,v'}$ to be the induced quotient map
\[
	\Lambda^*V(D_v) \to \Lambda^*(V(D_v)/(x+y)) \cong 
	\Lambda^*V(D_{v'}).
\]
In the latter scenario, we make use of the identification $V(D_v) \cong V(D_{v'})/(x+y)$, which induces an identification
\[
	\Lambda^*V(D_v) \cong 
	\Lambda^*(V(D_{v'})/(x+y)) \cong
	(x+y) \wedge \Lambda^*V(D_{v'}).
\]
The differential $d_{v,v'}$ is the \emph{split map}, defined as the composition
\[
	\Lambda^*V(D_v) \to
	\Lambda^*(V(D_{v'})/(x+y)) \to
	(x+y) \wedge \Lambda^*V(D_{v'})
	\hookrightarrow
	\Lambda^*V(D_{v'}).
\]
There is a  chain map
\[
	\Phi_{\infty}: \CKh(D) \to \CKh(D)
\]
given on each $\Lambda^*V(D_v)$ by wedging the component of $D_v$ containing $\infty$.
The \emph{reduced Khovanov complex} of $D$ is the quotient complex
\[
	\CKhr(D) 
	= (\CKh(D)/\Im(\Phi_{\infty}))[0,-1].
\]
The ``$[0,-1]$'' indicates a degree shift of $(0,-1)$ in the $(i,j)$ grading.

\section{A spectral sequence from Khovanov homology}
\label{sec:a_SS}
We begin with an overview of the algebraic structures that will appear in proofs of Theorem~\ref{thm:SS_intro} and Theorem~\ref{thm:triangle_intro}.

Let $K \subset S^3$ be a link and $D \subset S^2$ be a diagram for $K$.
Fix a base point $p$ and view it as $\infty \in S^2$.
Label the crossings as $\{1,\dots, N\}$ and consider the cube of resolutions that arises from the skein resolutions as in Figure~\ref{fig:triangle210}.
\begin{figure}[th!]
	\centering
\tikzset{every picture/.style={line width=0.75pt}} 

\begin{tikzpicture}[x=0.35pt,y=0.35pt,yscale=-1,xscale=1]
	
	\draw   (11.33,102.56) .. controls (11.33,52.17) and (52.57,11.33) .. (103.44,11.33) .. controls (154.32,11.33) and (195.56,52.17) .. (195.56,102.56) .. controls (195.56,152.94) and (154.32,193.78) .. (103.44,193.78) .. controls (52.57,193.78) and (11.33,152.94) .. (11.33,102.56) -- cycle ;
	\draw   (235.33,101.89) .. controls (235.33,51.51) and (276.57,10.67) .. (327.44,10.67) .. controls (378.32,10.67) and (419.56,51.51) .. (419.56,101.89) .. controls (419.56,152.27) and (378.32,193.11) .. (327.44,193.11) .. controls (276.57,193.11) and (235.33,152.27) .. (235.33,101.89) -- cycle ;
	\draw   (465.33,102.56) .. controls (465.33,52.17) and (506.57,11.33) .. (557.44,11.33) .. controls (608.32,11.33) and (649.56,52.17) .. (649.56,102.56) .. controls (649.56,152.94) and (608.32,193.78) .. (557.44,193.78) .. controls (506.57,193.78) and (465.33,152.94) .. (465.33,102.56) -- cycle ;
	\draw    (39.33,37.67) -- (166.89,167.11) ;
	\draw    (94.89,105.78) -- (40.22,167.11) ;
	\draw    (162.22,33.11) -- (107.56,94.44) ;
	\draw    (272.22,30.44) .. controls (328.22,71.11) and (287.56,177.11) .. (268.22,172.44) ;
	\draw    (375.56,24.44) .. controls (320.89,75.11) and (362.22,156.44) .. (387.56,171.78) ;
	\draw    (482,52) .. controls (518.22,93.11) and (597.56,94.44) .. (628.89,45.78) ;
	\draw    (484.22,157.78) .. controls (526.44,119.56) and (568.22,117.78) .. (632.89,154.44) ;
	
	\draw (88,202.07) node [anchor=north west][inner sep=0.75pt]    {$K_{2}$};
	\draw (321.33,202.07) node [anchor=north west][inner sep=0.75pt]    {$K_{1}$};
	\draw (551.33,203.4) node [anchor=north west][inner sep=0.75pt]    {$K_{0}$};
\end{tikzpicture}
\caption{An unoriented skein triple in $\HMR$.}
\label{fig:triangle210}
\end{figure}
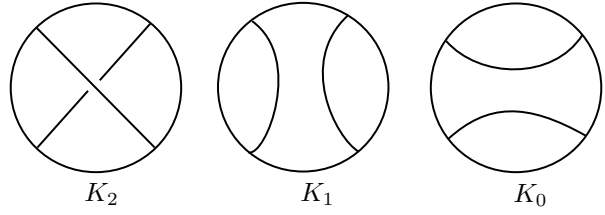
This figure is precisely the mirror of Figure~\ref{fig:trianglemirror210}, reflecting the fact that $\HMR$ is relate to Khovanov homology of the mirrors.
In particular, labelling of the resolution is now descending, where $K_2$ is by convention the trivial resolution. 
Let $v \in \{0,1,2\}^N$ and $v_j$ be its $j$th coordinate.
Denote by $D_v$ and $K_v$ the diagram and the link resulting from $v_j$-resolution at the $j$th crossing, for every $1 \le j \le N$, respectively.

We will define a chain complex
\[
	\bfC(D) = 
	\bigoplus_{v \in \{0,1\}^N} \widetilde{C}(K_v)
\]
and a differential 
\[\bfF = \sum_{v \ge u} f_{v,u}: \bfC \to \bfC\] 
having components
\[f_{v,u}:\widetilde{C}(K_v) \to \widetilde{C}(K_u).
\]
In this section, we will prove the following theorem, which generalizes the unoriented skein exact triangle in the case $N=1$.
\begin{thm}
\label{thm:quasi_iso_1}
	There is a quasi-isomorphism $(C_{w},f_{w,w}) \to (\bfC,\bfF)$, where $w=(2,\dots,2)$.
\end{thm}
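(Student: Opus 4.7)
The plan is to proceed by induction on $N$, the number of crossings of $D$, using Theorem~\ref{thm:triangle_intro} as the engine at every stage. For the base case $N = 1$, the complex $\bfC$ has only three summands $\widetilde{C}(K_0)$, $\widetilde{C}(K_1)$, $\widetilde{C}(K_2)$ with $w = (2)$; the desired quasi-isomorphism is then the standard fact that, given an exact triangle $A \to B \to C$, the object $C$ is quasi-isomorphic to the iterated mapping cone of $A \to B$, equivalently to the total complex of the three-term cube of Theorem~\ref{thm:triangle_intro}.

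For the inductive step, I would fix the last crossing and decompose $\bfC = \bfC_0 \oplus \bfC_1 \oplus \bfC_2$ according to the value of the $N$th coordinate. Each slice $\bfC_i$ is the $(N-1)$-cube complex associated with the diagram $D_i$ obtained by performing $i$-resolution at crossing $N$. By the inductive hypothesis there are quasi-isomorphisms $\widetilde{C}(K_{w_i}) \to \bfC_i$, where $w_i = (2,\dots,2,i) \in \{0,1,2\}^N$. The differential $\bfF$ decomposes into within-slice components (the cube differential of $D_i$) and between-slice components (length-$k$ maps arising from cobordisms involving crossing $N$). Assembling the inductive quasi-isomorphisms into a morphism of three-term total complexes reduces the problem to the base case applied to the triple $(K_{w_0}, K_{w_1}, K_{w_2})$, yielding the claimed quasi-isomorphism with $\widetilde{C}(K_{w_2}) = \widetilde{C}(K_w)$.

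The main obstacle is coherence: one must ensure that the inductive quasi-isomorphisms $\widetilde{C}(K_{w_i}) \to \bfC_i$ can be chosen compatibly with the between-slice pieces of $\bfF$, including the higher-length maps $f_{v,u}$ with $|v-u|_1 \geq 2$ that traverse two or three sub-cubes. These higher maps are built from moduli of real Seiberg-Witten solutions over multi-parameter families of metrics (parametrized by broken-neck associahedra on the cobordism), and establishing the needed chain-homotopy coherence requires a careful neck-stretching and gluing analysis. I expect this step to parallel Bloom's argument in~\cite{Bloom2011}, with additional care needed to handle the anti-linear involutions on the branched double covers and the behaviour of the reducible locus, which is the only part of the story where the real structure genuinely differs from the ordinary Seiberg-Witten setting.
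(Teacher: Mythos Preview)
Your overall inductive strategy matches the paper's, but you have misread the definition of $\bfC$. The complex $\bfC = \bfC[\bfone,\bfzero]$ is indexed over $\{0,1\}^N$, not $\{0,1,2\}^N$. For $N=1$ it has two summands and is precisely the mapping cone of $f_{1,0}:\widetilde C(K_1)\to\widetilde C(K_0)$; the base case is then exactly Proposition~\ref{prop:triangle}. The three-term object you describe is $\bfC[2,0]$, and it is \emph{acyclic} (being the cone of the quasi-isomorphism $(f_{2,1},f_{2,0}):\widetilde C(K_2)\to\bfC[1,0]$ supplied by that proposition), so it cannot be quasi-isomorphic to $\widetilde C(K_2)$ unless the latter has trivial homology. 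Correspondingly, in the inductive step you should split $\bfC$ into two slices $\bfC_0,\bfC_1$ according to the last coordinate, apply the inductive hypothesis to each, and then invoke Proposition~\ref{prop:triangle} once for the triple $(K_{w_0},K_{w_1},K_{w_2})$.

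The coherence obstacle you flag is real but is already handled in the paper, and you should point to it rather than leave it open. The larger cube $\bfC[\bftwo,\bfzero]$ over $\{0,1,2\}^N$ is shown to be a complex in Lemma~\ref{lem:20_complex}; the identity $\bfF[\bftwo,\bfzero]^2=0$ packages exactly the chain-map and chain-homotopy relations needed to make the slice-wise quasi-isomorphisms compatible with the between-slice components of $\bfF$, including the higher-length maps. The paper's proof simply iterates this, producing a chain of quasi-isomorphisms
\[
\bfC[\bfone,\bfzero]\;\simeq\;\bfC[(1,\dots,1,2),(0,\dots,0,2)]\;\simeq\;\cdots\;\simeq\;\bfC[\bftwo,\bftwo]=\widetilde C(K_w),
\]
collapsing one coordinate from $\{0,1\}$ to $\{2\}$ at each step via Proposition~\ref{prop:triangle}. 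This is your induction, once the indexing is corrected.
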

Furthermore, $(\bfC,\bfF)$ carries a filtration $\cdots \subset \Fltr_n \subset \Fltr_{n+1} \subset \cdots$ by the sum of coordinates, preserved by the differential, where
\[
	\Fltr_n = \bigoplus_{\substack{{v \in \{0,1\}^N}\\{|v| \le n}}} \widetilde{C}(K_v).
\]
This filtration induces a spectral sequence, and by Theorem~\ref{thm:quasi_iso_1} the $E_{\infty}$ page can be identified with $\widetilde{\HMR}_{\bullet}$.

The $E_1$ page comprises $\widetilde{\HMR}(K_v)$ for all $v \in \{0,1\}^N$, which will be identified with $\CKhr(\overline{D}_v)$.
The differential $\bfF_1$ on the first page are formed by all edges $(v,v')$ such that $v >_1 v'$.
We will identify $\bfF_1$ with the Khovanov differential~\eqref{eqn:Kh_diff} of the mirror of $K$ in Proposition~\ref{prop:E1diso}.

\subsection{Topology of skein cobordisms}
\label{sec:top_skein_cob}
Fix once and for all a base point $p \in K$ playing the role of $\infty$ in $S^2 =  \bbR^2 \cup \{\infty\}$, away from all the skein resolutions.
Denote $S^3$ by $ Y$.
Typically a cylinder $I \times  Y$ will be denoted by $ W$.

We begin by reviewing the case of a single skein relation. 
Let $K_2,K_1,K_0$ be three links related as in Figure~\ref{fig:triangle210}.
From a 3-dimensional viewpoint, there is a $3$-ball $B^3$ outside which the $K_j$'s coincide. 
Each intersection of $K_j$ with the $B^3$ is formed by two arcs.
By identifying the $3$-ball with a solid regular tetrahedron, the arcs can be modelled as pairs of opposite edges, as in Figure~\ref{fig:arcintetrahedra}.
\begin{figure}[th!]
	\centering
\tikzset{every picture/.style={line width=0.75pt}}   
\begin{tikzpicture}[x=0.42pt,y=0.36pt,yscale=-1,xscale=1]
\draw [color={rgb, 255:red, 128; green, 128; blue, 128 }  ,draw opacity=1 ] [dash pattern={on 1.5pt off 1.5pt on 1.5pt off 1.5pt}]  (114.67,12.67) -- (206,127.67) ;
\draw [color={rgb, 255:red, 128; green, 128; blue, 128 }  ,draw opacity=1 ] [dash pattern={on 1.5pt off 1.5pt on 1.5pt off 1.5pt}]  (114.67,12.67) -- (38,128.33) ;
\draw [color={rgb, 255:red, 128; green, 128; blue, 128 }  ,draw opacity=1 ] [dash pattern={on 1.5pt off 1.5pt on 1.5pt off 1.5pt}]  (38,128.33) -- (206,127.67) ;
\draw [color={rgb, 255:red, 128; green, 128; blue, 128 }  ,draw opacity=1 ] [dash pattern={on 1.5pt off 1.5pt on 1.5pt off 1.5pt}]  (114.67,12.67) -- (116,218.33) ;
\draw [color={rgb, 255:red, 128; green, 128; blue, 128 }  ,draw opacity=1 ] [dash pattern={on 1.5pt off 1.5pt on 1.5pt off 1.5pt}]  (38,128.33) -- (116,218.33) ;
\draw [color={rgb, 255:red, 128; green, 128; blue, 128 }  ,draw opacity=1 ] [dash pattern={on 1.5pt off 1.5pt on 1.5pt off 1.5pt}]  (116,218.33) -- (206,127.67) ;
\draw [color={rgb, 255:red, 128; green, 128; blue, 128 }  ,draw opacity=1 ] [dash pattern={on 1.5pt off 1.5pt on 1.5pt off 1.5pt}]  (324,11.33) -- (415.33,126.33) ;
\draw [color={rgb, 255:red, 128; green, 128; blue, 128 }  ,draw opacity=1 ] [dash pattern={on 1.5pt off 1.5pt on 1.5pt off 1.5pt}]  (324,11.33) -- (247.33,127) ;
\draw [color={rgb, 255:red, 128; green, 128; blue, 128 }  ,draw opacity=1 ] [dash pattern={on 1.5pt off 1.5pt on 1.5pt off 1.5pt}]  (247.33,127) -- (415.33,126.33) ;
\draw [color={rgb, 255:red, 128; green, 128; blue, 128 }  ,draw opacity=1 ] [dash pattern={on 1.5pt off 1.5pt on 1.5pt off 1.5pt}]  (324,11.33) -- (325.33,217) ;
\draw [color={rgb, 255:red, 128; green, 128; blue, 128 }  ,draw opacity=1 ] [dash pattern={on 1.5pt off 1.5pt on 1.5pt off 1.5pt}]  (247.33,127) -- (325.33,217) ;
\draw [color={rgb, 255:red, 128; green, 128; blue, 128 }  ,draw opacity=1 ] [dash pattern={on 1.5pt off 1.5pt on 1.5pt off 1.5pt}]  (325.33,217) -- (415.33,126.33) ;
\draw [color={rgb, 255:red, 128; green, 128; blue, 128 }  ,draw opacity=1 ] [dash pattern={on 1.5pt off 1.5pt on 1.5pt off 1.5pt}]  (534,12.67) -- (625.33,127.67) ;
\draw [color={rgb, 255:red, 128; green, 128; blue, 128 }  ,draw opacity=1 ] [dash pattern={on 1.5pt off 1.5pt on 1.5pt off 1.5pt}]  (534,12.67) -- (457.33,128.33) ;
\draw [color={rgb, 255:red, 128; green, 128; blue, 128 }  ,draw opacity=1 ] [dash pattern={on 1.5pt off 1.5pt on 1.5pt off 1.5pt}]  (457.33,128.33) -- (625.33,127.67) ;
\draw [color={rgb, 255:red, 128; green, 128; blue, 128 }  ,draw opacity=1 ] [dash pattern={on 1.5pt off 1.5pt on 1.5pt off 1.5pt}]  (534,12.67) -- (535.33,218.33) ;
\draw [color={rgb, 255:red, 128; green, 128; blue, 128 }  ,draw opacity=1 ] [dash pattern={on 1.5pt off 1.5pt on 1.5pt off 1.5pt}]  (457.33,128.33) -- (535.33,218.33) ;
\draw [color={rgb, 255:red, 128; green, 128; blue, 128 }  ,draw opacity=1 ] [dash pattern={on 1.5pt off 1.5pt on 1.5pt off 1.5pt}]  (535.33,218.33) -- (625.33,127.67) ;
\draw    (38,128.33) .. controls (68.67,141) and (128.67,59.67) .. (114.67,12.67) ;
\draw    (116,218.33) .. controls (114.67,180.33) and (151.33,127.67) .. (202.67,127.67) ;
\draw    (247.33,127) .. controls (287.33,97) and (340.67,177) .. (325.33,217) ;
\draw    (324,11.33) .. controls (312,59) and (376.67,128.33) .. (415.33,126.33) ;
\draw    (534,12.67) .. controls (510,73) and (514,184.33) .. (535.33,218.33) ;
\draw    (457.33,128.33) .. controls (481.33,111) and (500.67,105) .. (512,103) ;
\draw    (524,102.33) .. controls (558.67,97.67) and (612.67,116.33) .. (621.33,127.67) ;
\end{tikzpicture}
	\caption{Standard model of $(B^3,B^3 \cap K_j)$}
	\label{fig:arcintetrahedra}
\end{figure}
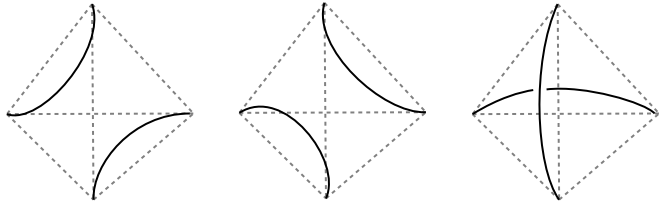
There is an order-$3$ symmetry in Figure~\ref{fig:arcintetrahedra}, realized by clockwise rotation of the tetrahedron, around the axis that from the top vertex to the centre of the bottom face.
We extend the indexing to $\bbZ$ with the understanding that $K_j = K_{j+3}$.

Each pair $(K_{j},K_{j-1})$ are related by a $1$-handle attachment cobordism $\Sigma_{j,j-1}$.
To be precise, $\Sigma_{j,j-1}$ is a properly embedded surface in $ W_{j,j-1} = [0,1] \times S^3$, bounding $\{0\} \times \overline{K_{j}}$ and $\{1\} \times K_{j-1}$.
Outside $[0,1] \times B^3$, we define $\Sigma_{j,j-1}$ as the product of $[0,1]$ with $K_j$.
Inside $[0,1] \times B^3$, we attach a saddle $T_{j,j-1}$ which is identified with a rectangle agreeing with $B^3 \cap K_j$ in $\{0\} \times B^3$ and $B^3 \cap K_{j-1}$ in $\{1\} \times B^3$.
This is demonstrated in Figure~\ref{fig:saddle1}
\begin{figure}[th!]
	\centering
	\includegraphics[width=0.45\linewidth]{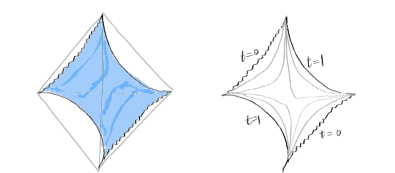}
	\caption{The $1$-handle $T_{j,j-1}$ as a rectangle.}
	\label{fig:saddle1}
\end{figure}

There is an arc $\delta_i \subset B^3$ lying on $\Sigma_{i,i-1}$ at time $\frac12$, connecting the two components of the link in $B^3$, see Figure~\ref{fig:delta_arc}. 
By taking the union of 
\begin{itemize}
	\itemindent=-13pt
	\item neighbourhoods (in $\Sigma_{i,i-2}$) of two arcs $\delta_{i+1}$ and $\delta_{i}$  at time $\frac12$ and $\frac32$, respectively, and
	\item two bands obtained from neighbourhoods of $(\delta \cap K_i) \subset K_i$, in time $[\frac12,\frac32]$,
\end{itemize}
we obtain a M\"obius band $M_{i,i-2} \subset [0,2] \times Y$.
This M\"obius band has self-intersection $(+2)$ (see \cite[Lemma~7.2]{KMunknot2011}).
Let $B_{i,i-2}$ be a regular neighbourhood (diffeomorphic to a $4$-ball) of $[\frac12,\frac32] \times \delta_{i,i-2}$ in $[\frac12,\frac32] \times B^3$.
Let $ S_{i,i-2}$ be the boundary of $ B_{i,i-2}$.
The hypersurface $S_{i,i-2} \subset  W_{i,i-2}$ intersects the M\"obius band $M_{i,i-1}$ at an unknot.
\begin{figure}[hbt]
	\centering
	\includegraphics[height=1.5in]{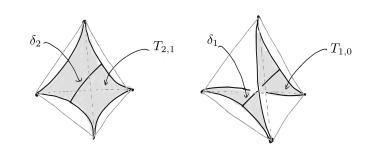}
	\caption{The arc $\delta_i \subset \Sigma_{i,i-1}$.}
	\label{fig:delta_arc}
\end{figure}

Inside the triple composition,
we glue the two M\"obius bands $M_{i,i-2}$ and $M_{i-1,i-3}$ to obtain $M_{i,i-3}$, which topologically is twice-punctured $\mathbb{RP}^2$.
\begin{figure}[hbt]
	\centering
	\includegraphics[height=1.5in]{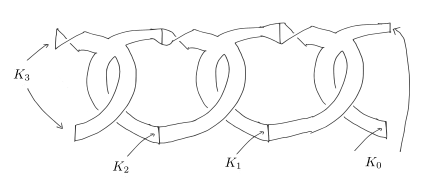}
	\caption{Gluing M\"obius bands.}
\end{figure}

Let $B_{i,i-3}$ be the regular neighbourhood of the union $B_{i,i-2} \cup B_{i-1,i-3}$ which is a $4$-ball containing $M_{i,i-3}$.
Let $R_{3,0}$ be the boundary $3$-sphere of $B_{3,0}$.
Then $R_{3,0}$ intersects the twice-punctured $\mathbb{RP}^2$ at a 2-component unlink. 
The following summarizes the topological observations that be found in \cite[Section~5]{KMOS2007} and \cite[Section~7]{KMunknot2011}.

\begin{prop}
\label{prop:double_cover_triangle}
The double branched cover $\bbB_{i,i-2}$ of $B_{i,i-2}$ is $\overline{\mathbb{CP}}^2$ minus a $4$-ball where the covering involution acts by complex conjugation.
The triple cobordism $(W_{3,0},\Sigma_{3,0})$ can be built from $([0,3] \times  Y) \setminus ( B_{3,0},\Delta)$ and $
(B_{3,0}, M_{3,0})$, glued along $( R_{3,0},\U_2)$.
In particular, $\Delta$ is a $2$-disc whose boundary coincides with an unlink $\U_2$.
In particular, the double branched cover $\bbN$ of $
(B_{3,0}, M_{3,0})$ is $(D^2 \times S^2) \# \overline{\mathbb{CP}}^2$.
A more symmetric description of $N$ is a $4$-manifold such that $\del N = S^1 \times S^2$ and contains two spheres $E_1,E_2$ satisfying $E_1 \cdot E_2 = 1$ and $E_i^2 = -1$. 
\hfill \qedsymbol
\end{prop}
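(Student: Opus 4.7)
The plan is to verify the four assertions by constructing explicit branched-cover models, in the spirit of \cite[\S5]{KMOS2007} and \cite[\S7]{KMunknot2011}. For the first assertion, I would model $M_{i,i-2} \subset B_{i,i-2}$ as a local piece of the standard $\mathbb{RP}^2 \subset S^4 = \mathbb{CP}^2/\mathrm{conj}$ afforded by the Arnold--Massey identification, with the covering involution given by complex conjugation on $\mathbb{CP}^2$. The normal Euler number of this $\mathbb{RP}^2$ in $S^4$ is $\pm 2$, matching the self-intersection $+2$ of $M_{i,i-2}$ established in \cite[Lemma~7.2]{KMunknot2011}, so the regular neighbourhood $B_{i,i-2}$ lifts to an equivariant $4$-ball neighbourhood in $\mathbb{CP}^2$, and $\bbB_{i,i-2}$ is identified with $\overline{\mathbb{CP}}^2 \setminus B^4$ with the restricted complex conjugation as the covering involution.

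The second assertion is essentially tautological once $B_{3,0}$ is chosen: cutting $(W_{3,0},\Sigma_{3,0})$ along the hypersurface $R_{3,0}$ separates the pair into $(B_{3,0},M_{3,0})$ and its complement. By construction of $B_{3,0}$ as a regular neighbourhood of the two smaller $B_{i,i-2}$ inside $[0,3] \times B^3$, the three saddle cobordisms lie inside $B_{3,0}$, and the complementary surface is a product outside a small disc region near $R_{3,0}$. Hence outside of $B_{3,0}$ the surface restricts to a pair of properly embedded $2$-discs $\Delta$ bounding the $2$-component unlink $\U_2 = R_{3,0} \cap M_{3,0}$.

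For the third assertion, I would first decompose $M_{3,0} = M_{i,i-2} \cup M_{i-1,i-3}$ along a small disc $D$ where the two M\"obius bands share a neck. Passing to the double branched cover, the two thickened neighbourhoods $B_{i,i-2}$ and $B_{i-1,i-3}$ each lift to $\overline{\mathbb{CP}}^2 \setminus B^4$ by the first assertion, while a regular neighbourhood of $D$ (a $4$-ball meeting the branch locus in a disc) lifts to another $4$-ball. Thus $\bbN$ is obtained from two copies of $\overline{\mathbb{CP}}^2 \setminus B^4$ by boundary-connected summing along an equivariant $3$-ball whose intersection with the branch locus is a pair of points. A handle-decomposition calculation, or equivalently a Kirby diagram as in \cite[\S7]{KMunknot2011}, then identifies the result as $(D^2 \times S^2) \# \overline{\mathbb{CP}}^2$, with $\partial \bbN = S^1 \times S^2$ because the double branched cover of $(S^3,\U_2)$ is $S^1 \times S^2$.

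The symmetric description follows immediately: the spheres $E_1, E_2 \subset \bbN$ are the preimages of the two M\"obius bands, each of which is an embedded $2$-sphere of self-intersection $-1$ in the double branched cover of a $+2$-framed M\"obius band in a $4$-ball; the intersection number $E_1 \cdot E_2 = 1$ comes from the single transverse intersection where the two M\"obius bands meet. The main obstacle I expect is the identification in the third step of the glued manifold as $(D^2 \times S^2) \# \overline{\mathbb{CP}}^2$ rather than a cousin like $S^2 \times S^2$ or $\mathbb{CP}^2 \# \overline{\mathbb{CP}}^2$; this hinges on careful bookkeeping of orientations and framings when gluing the two $\overline{\mathbb{CP}}^2 \setminus B^4$ pieces across the neck, for which I would follow the Kirby-calculus presentation in \cite[\S7]{KMunknot2011} rather than reconstruct the computation from scratch.
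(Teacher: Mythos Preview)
The paper does not give a proof of this proposition at all: it is stated with a terminal \qedsymbol\ and prefaced by the sentence ``The following summarizes the topological observations that be found in \cite[Section~5]{KMOS2007} and \cite[Section~7]{KMunknot2011}.'' So there is nothing to compare against beyond those references, and your sketch is in the right spirit of extracting the argument from them.

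That said, there is a genuine error in your fourth paragraph. The spheres $E_1,E_2$ are \emph{not} the preimages of the two M\"obius bands. In a double branched cover the branch locus lifts homeomorphically to itself; the preimage of each $M_{i,i-2}$ inside $\bbB_{i,i-2}\cong\overline{\mathbb{CP}}^2\setminus B^4$ is the real locus, a M\"obius band (a punctured $\mathbb{RP}^2$), not a $2$-sphere. The $(-1)$-spheres are the complex projective lines in the two $\overline{\mathbb{CP}}^2$ pieces, which are not fixed by conjugation. Their intersection number $E_1\cdot E_2=1$ does not come from a transverse intersection of the M\"obius bands downstairs but from the handle structure of $\bbN$ as in \cite[Section~5]{KMOS2007}.

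Your third paragraph also needs care: the two $4$-balls $B_{i,i-2}$ and $B_{i-1,i-3}$ overlap in a $4$-dimensional region (around the middle saddle), not along a $3$-ball, so describing the gluing of their covers as a boundary connected sum ``along an equivariant $3$-ball whose intersection with the branch locus is a pair of points'' is not accurate. You correctly flag this step as the main obstacle and defer to the Kirby-calculus argument in \cite[Section~7]{KMunknot2011}; that deferral is exactly what the paper does, so it is fine, but the heuristic you give before deferring is misleading.
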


Suppose now there are $N$ crossings, and for each
$v \in \{0,1,2\}^N$, let $K_v$ be the corresponding link in cube of resolutions.
Let $B_1,\dots,B_N$ be $3$-balls such that $K_v$'s agree outside $B_1 \cup \dots \cup B_N$, whereas inside $B_k$, each $K_v$ is modelled on the $v_j$'s picture in Figure~\ref{fig:arcintetrahedra}.
Using the 3-fold symmetry, we extend $\{0,1,2\}^N$ to $\bbZ^N$ periodically.
Equip $\bbZ^N$ with the partial order that $v \ge u$ if and only if $v_j \ge u_j$, for all $j$.
We consider the $\ell^{1}$ and $\ell^{\infty}$ norms on $\bbZ^N$:
\begin{align*}
	|v|_{1} &= \sum_{j=1}^N |v_j|, \\
	|v|_{\infty} &= \max_{1 \le j \le N} |v_j|.
\end{align*}
Whenever $v \ge u$ are such that $|v-u|_{\infty} \le 1$, we define the cobordism $\Sigma_{v,u} \subset [0,1] \times  Y$ from $K_v$ to $K_u$ the same way as before, in each ball where $v_j = u_j + 1$.
If $w \ge v \ge u$, where $|w - v|_{\infty} = |v-u|_{\infty} = 1$, we define concatenated cobordism $\Sigma_{w,u}$ to be $\Sigma_{v,u}\circ \Sigma_{w,v}$.
For $u \ge v$, the cobordism $\Sigma_{v,u}$ is understood as a surface in $[0,L] \times S^3$, where $L = |u-v|_{\infty}$.
We arrange so that the critical points of $\Sigma_{v,u}$ occur at $\{1/2,3/2,\dots,(L-\frac12)\}$.

Let $W^o$ be the $4$-manifold obtained from removing a small $4$-ball $B_o$ around $\{\frac14\} \times \{x\}$ from $[0,L] \times Y$. 
In particular, the puncture is on the left to all $1$-handles. 
The ball $B_o$ intersects $\Sigma_{v,u}$ at a small $2$-disc and the remaining surface $\Sigma_{v,u}^o$ is a properly embedded surface in $ W^{o}$.
We put a partial order on the lattice $\bbZ^N \times \{0,1\}$ by viewing it as a sublattice of $\bbZ^{N+1}$.
An element of $\bbZ^N \times \{0,1\}$ will be denoted as $w \in  \bbZ^{N+1}$, where $w_{N+1} \in \{0,1\}$.

We will attach cylindrical ends $(-\infty,0] \times ( Y, K_v)$ and $[L,\infty) \times ( Y, K_u)$ to $([0,L] \times  Y,\Sigma_{v,u})$.
By abusing notations, also $\Sigma_{v,u}$ denotes a compact surface with boundary, or a noncompact surface in the cylinder $(-\infty,+\infty) \times  Y$ or $((-\infty,+\infty) \times  Y) \setminus  B_o$ with infinite cylindrical ends.

\subsection{Hypersurfaces}
\begin{notat}
	We will use boldface number $\mathbf{n}$ to denote the vector $(n,\dots,n)$ in $ \bbZ^N$.
\end{notat}
Let $(W,\Sigma)$ and $(W^o,\Sigma^o)$ be the cobordisms from $\bfone$ to $\bfzero$, that is, for $v = (1,\dots,1)$ to $u=(0,\dots,0)$.
We have the analogue of \cite[Lemma~2.2]{Bloom2011}.
\begin{defn-prop}
	\label{prop:hypsurfaces1}
There exist $(2^N-2)$ hypersurfaces $ Y_v \cong S^3$, for $v \in \bbZ^N$ such that $\bfone > v  > \bfzero$.
Each $Y_v \cap \Sigma$ is a link $K_v$, and the intersection of $ Y_v$ and $Y_{v'}$ is a $2$-sphere if $v,v'$ are unordered. 
Furthermore, there exist $(2 \times 2^N-2)$ hypersurfaces $ Y_{w}\cong S^3$, where $w \in \bbZ^{N+1}$ such that $\bfzero 0 < w < \bfone 1$.
Let $ W^o_{w,w'}$ be the submanifold of $ W^o$ bounded by $ Y_{w}$ and $ Y_{w}$.
The pair $( Y_{w},  Y_{w} \cap \Sigma)$ is diffeomorphic to $(S^3, K_w)$; if $w > w'$, then the pair $( W_{w,w'},  W_{w,w'} \cap \Sigma^o)$ is diffeomorphic to $(I \times S^3, \Sigma_{ww'})$ if $w_{N+1}=w'_{N+1}$, and isomorphic to
$(I \times S^3 \setminus  B_o, \Sigma_{ww'}^o)$ otherwise.
\end{defn-prop} 
\begin{proof}
	The definition of hypersurfaces can be extracted from the proof of Proposition~\ref{prop:hypsurfaces2} and~\ref{prop:hypersurfaces2o}.
	For the hypersurfaces when $N=2$, see Figure~\ref{fig:prop:hypsurfaces1}.
	The additional hypersurfaces from the first statement to the second statement are red.
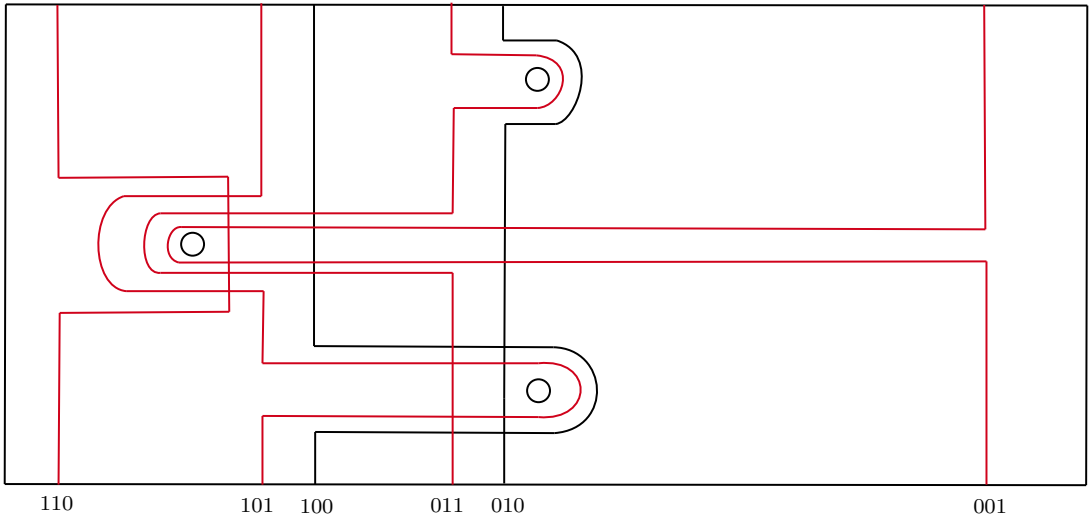
\begin{figure}[hbt]
\centering
\tikzset{every picture/.style={line width=0.75pt}} 
\begin{tikzpicture}[x=0.65pt,y=0.65pt,yscale=-1,xscale=1]
\draw    (10.67,10.33) -- (640,11) ;
\draw    (10,289.67) -- (639.33,290.33) ;
\draw   (313.33,54) .. controls (313.33,50.32) and (316.32,47.33) .. (320,47.33) .. controls (323.68,47.33) and (326.67,50.32) .. (326.67,54) .. controls (326.67,57.68) and (323.68,60.67) .. (320,60.67) .. controls (316.32,60.67) and (313.33,57.68) .. (313.33,54) -- cycle ;
\draw   (314,235.33) .. controls (314,231.65) and (316.98,228.67) .. (320.67,228.67) .. controls (324.35,228.67) and (327.33,231.65) .. (327.33,235.33) .. controls (327.33,239.02) and (324.35,242) .. (320.67,242) .. controls (316.98,242) and (314,239.02) .. (314,235.33) -- cycle ;
\draw    (640,11) -- (639.33,290.33) ;
\draw    (300,11.33) -- (300,31.33) ;
\draw    (301.33,80) -- (300.67,240) ;
\draw    (300.67,240) -- (300.67,289.33) ;
\draw    (190,10.33) -- (190,209.33) ;
\draw    (190.67,259.33) -- (190.67,290) ;
\draw    (301.33,80) -- (330.67,80) ;
\draw    (190,209.33) -- (329.33,210) ;
\draw    (190.67,259.33) -- (330,260) ;
\draw    (331.33,31.33) .. controls (357.33,40) and (342.67,78.67) .. (330.67,80) ;
\draw    (10.67,10.33) -- (10,289.67) ;
\draw   (112.67,150) .. controls (112.67,146.32) and (115.65,143.33) .. (119.33,143.33) .. controls (123.02,143.33) and (126,146.32) .. (126,150) .. controls (126,153.68) and (123.02,156.67) .. (119.33,156.67) .. controls (115.65,156.67) and (112.67,153.68) .. (112.67,150) -- cycle ;
\draw [color={rgb, 255:red, 208; green, 2; blue, 27 }  ,draw opacity=1 ]   (270,9.33) -- (270,39.33) ;
\draw [color={rgb, 255:red, 208; green, 2; blue, 27 }  ,draw opacity=1 ]   (270,39.33) -- (319.33,40) ;
\draw [color={rgb, 255:red, 208; green, 2; blue, 27 }  ,draw opacity=1 ]   (271.33,70.67) -- (270.67,132) ;
\draw [color={rgb, 255:red, 208; green, 2; blue, 27 }  ,draw opacity=1 ]   (319.33,40) .. controls (344.67,42.67) and (334.67,70) .. (320,70.67) ;
\draw [color={rgb, 255:red, 208; green, 2; blue, 27 }  ,draw opacity=1 ]   (270.67,166.67) -- (270.67,290) ;
\draw [color={rgb, 255:red, 208; green, 2; blue, 27 }  ,draw opacity=1 ]   (100.67,132) -- (270.67,132) ;
\draw [color={rgb, 255:red, 208; green, 2; blue, 27 }  ,draw opacity=1 ]   (100.67,166.67) -- (270.67,166.67) ;
\draw [color={rgb, 255:red, 208; green, 2; blue, 27 }  ,draw opacity=1 ]   (100.67,132) .. controls (88.67,132.67) and (87.33,168) .. (100.67,166.67) ;
\draw [color={rgb, 255:red, 208; green, 2; blue, 27 }  ,draw opacity=1 ]   (159.33,9.67) -- (159.33,122) ;
\draw [color={rgb, 255:red, 208; green, 2; blue, 27 }  ,draw opacity=1 ]   (160.67,177.33) -- (160,219.33) ;
\draw [color={rgb, 255:red, 208; green, 2; blue, 27 }  ,draw opacity=1 ]   (79.33,122) -- (159.33,122) ;
\draw [color={rgb, 255:red, 208; green, 2; blue, 27 }  ,draw opacity=1 ]   (80.67,177.33) -- (160.67,177.33) ;
\draw [color={rgb, 255:red, 208; green, 2; blue, 27 }  ,draw opacity=1 ]   (79.33,122) .. controls (58.67,128.67) and (60,175.33) .. (80.67,177.33) ;
\draw [color={rgb, 255:red, 208; green, 2; blue, 27 }  ,draw opacity=1 ]   (160,250) -- (160,290) ;
\draw [color={rgb, 255:red, 208; green, 2; blue, 27 }  ,draw opacity=1 ]   (160,250) -- (320.67,250.67) ;
\draw [color={rgb, 255:red, 208; green, 2; blue, 27 }  ,draw opacity=1 ]   (160,219.33) -- (320.67,219.33) ;
\draw  [color={rgb, 255:red, 208; green, 2; blue, 27 }  ,draw opacity=1 ]  (40.67,10.67) -- (41.33,111.33) ;
\draw   [color={rgb, 255:red, 208; green, 2; blue, 27 }  ,draw opacity=1 ] (42,190) -- (41.33,290) ;
\draw  [color={rgb, 255:red, 208; green, 2; blue, 27 }  ,draw opacity=1 ]  (41.33,111.33) -- (140,110.67) ;
\draw  [color={rgb, 255:red, 208; green, 2; blue, 27 }  ,draw opacity=1 ] (42,190) -- (140.67,189.33) ;
\draw  [color={rgb, 255:red, 208; green, 2; blue, 27 }  ,draw opacity=1 ]  (140,110.67) -- (140.67,189.33) ;
\draw [color={rgb, 255:red, 208; green, 2; blue, 27 }  ,draw opacity=1 ]   (580,10.67) -- (580.67,141.33) ;
\draw [color={rgb, 255:red, 208; green, 2; blue, 27 }  ,draw opacity=1 ]   (581.33,160) -- (581.33,290) ;
\draw [color={rgb, 255:red, 208; green, 2; blue, 27 }  ,draw opacity=1 ]   (112.67,140) -- (580.67,141.33) ;
\draw [color={rgb, 255:red, 208; green, 2; blue, 27 }  ,draw opacity=1 ]   (111.33,160.67) -- (581.33,160) ;
\draw [color={rgb, 255:red, 208; green, 2; blue, 27 }  ,draw opacity=1 ]   (112.67,140) .. controls (104.67,139.33) and (100.67,158) .. (111.33,160.67) ;
\draw [color={rgb, 255:red, 208; green, 2; blue, 27 }  ,draw opacity=1 ]   (271.33,70.67) -- (320,70.67) ;
\draw    (329.33,210) .. controls (361.33,210.67) and (364.67,257.33) .. (330,260) ;
\draw [color={rgb, 255:red, 208; green, 2; blue, 27 }  ,draw opacity=1 ]   (320.67,219.33) .. controls (353.33,216) and (353.33,253.33) .. (320.67,250.67) ;
\draw    (300,31.33) -- (331.33,31.33) ;
\draw (357.33,-43) node [anchor=north west][inner sep=0.75pt]   [align=left] {};
\draw (572,296.73) node [anchor=north west][inner sep=0.75pt]  [font=\footnotesize]  {$001$};
\draw (291.33,296.07) node [anchor=north west][inner sep=0.75pt]  [font=\footnotesize]  {$010$};
\draw (256,296.07) node [anchor=north west][inner sep=0.75pt]  [font=\footnotesize]  {$011$};
\draw (180,296.73) node [anchor=north west][inner sep=0.75pt]  [font=\footnotesize]  {$100$};
\draw (145.33,296.07) node [anchor=north west][inner sep=0.75pt]  [font=\footnotesize]  {$101$};
\draw (28.67,295.4) node [anchor=north west][inner sep=0.75pt]  [font=\footnotesize]  {$110$};
\end{tikzpicture}
\caption{Hypersurfaces in Definition-Proposition~\ref{prop:hypsurfaces1} when $N=2$.}
\label{fig:prop:hypsurfaces1}
\end{figure} 
\end{proof}
Let $( W,\Sigma)$ be the cobordism from $ \bftwo$ to $\bfzero$.
We need $N$ new auxiliary hypersurfaces.
\begin{defn-prop}
	\label{prop:hypsurfaces2}
There exist $(3^N-2)$ hypersurfaces $Y_v \cong S^3$, for $v \in \bbZ^N$ such that $\bftwo > v > \bfzero$.
In addition, there exist $N$ pairwise disjoint hypersurfaces $S_k \cong S^3$ for $k \in \{1,\dots,N\}$.
These hypersurfaces satisfy the following.
\begin{itemize}
	\itemindent=-13pt
	\item $Y_{v^1},\dots, Y_{v^m}$ are pairwise disjoint if $v^1 < v^2 < \dots < v^m$.
	\item The intersection of $Y_v$ and $Y_{v'}$ consists of $2$-spheres (possibly empty).
	\item The intersection of $Y_v$ and $S_k$ is nonempety if and only if $v_k = 1$. 
\end{itemize}
Each $ Y_v \cap \Sigma$ is the link $K_v$, and $S_k \cap \Sigma$ is a unknot which bounds a M\"{o}bius band.
If $v > v'$, then $Y_{v},Y_{v'}$ together bound a submanifold $W_{v,v'}$.
The pair $(W_{v,v'},\Sigma \cap W_{v,v'})$ is diffeomorphic to $(I \times S^3, \Sigma_{v,v'})$.
\end{defn-prop} 
\begin{proof}
The proof is analogous to \cite[Proposition~5.1]{Bloom2011}, except in reverse order.
We totally order the set of $v$'s first by descreasing $|v|_1$ and then (decreasing) numerically within the same $|\cdot|_1$. 
Choose a sequence $t_v \in (\frac14,2)$ in an order-preserving fashion.
On the complement of $[0,2] \times (Y \setminus \bigcup B_k)$ in $W$, the hypersurface $ Y_{v}$ is a slice 
	\[
		\{t_{v}\} \times (Y \setminus \bigcup_{k=1}^N B_k).
	\]
If $v_k=2$, we glue back in the ball $\{t_v\} \times B_k$.
If $v_k =0$ or $1$,
	we glue in the $t > t_v$ half of of the boundary of a neighbourhood of the cylinder 
	\[
	\left[t_v, (2-v_k)-\frac12
	\right] \times B_k.
	\]
(Recall that $((2-v_k)-\frac12)$ is where the critical points occur.)
For a given $k$, we arrange such neighbourhoods so that their size increase respect to the ordering.
As a result, the resulting $Y_{v}$ is to the right of $Y_{v'}$ if $t_v > t_{v'}$.
We can assume also that the maximum $t$-coordinates of all $ Y_{v}$ having the same $|v|_{\infty}$ is less than $t_{v'}$ for all $|v'|_{\infty}>|v|_{\infty}$.
The configuration when $N=2$ is illustrated in Figure~\ref{fig:prop:hypsurfaces2}.
\begin{figure}
\centering
\tikzset{every picture/.style={line width=0.75pt}} 
\begin{tikzpicture}[x=0.65pt,y=0.65pt,yscale=-1,xscale=1]
\draw    (10.67,10.33) -- (640,11) ;
\draw    (10,289.67) -- (639.33,290.33) ;
\draw  [dash pattern={on 0.84pt off 2.51pt}]  (320,10.33) -- (320.67,290.33) ;
\draw  [fill={rgb, 255:red, 255; green, 255; blue, 255 }  ,fill opacity=1 ] (157.33,239) .. controls (157.33,237.16) and (158.83,235.67) .. (160.67,235.67) .. controls (162.51,235.67) and (164,237.16) .. (164,239) .. controls (164,240.84) and (162.51,242.33) .. (160.67,242.33) .. controls (158.83,242.33) and (157.33,240.84) .. (157.33,239) -- cycle ;
\draw [color={rgb, 255:red, 208; green, 2; blue, 27 }  ,draw opacity=1 ]   (161.33,221) -- (490.67,220.33) ;
\draw [color={rgb, 255:red, 208; green, 2; blue, 27 }  ,draw opacity=1 ]   (160.67,253) -- (490,252.33) ;
\draw [color={rgb, 255:red, 208; green, 2; blue, 27 }  ,draw opacity=1 ]   (160.67,253) .. controls (145.33,247) and (150.67,223) .. (161.33,221) ;
\draw [color={rgb, 255:red, 208; green, 2; blue, 27 }  ,draw opacity=1 ]   (490.67,220.33) .. controls (512,219.67) and (513.33,251.67) .. (490,252.33) ;
\draw [color={rgb, 255:red, 208; green, 2; blue, 27 }  ,draw opacity=1 ]   (161.33,44.33) -- (490.67,43.67) ;
\draw [color={rgb, 255:red, 208; green, 2; blue, 27 }  ,draw opacity=1 ]   (162,73.67) -- (491.33,73) ;
\draw [color={rgb, 255:red, 208; green, 2; blue, 27 }  ,draw opacity=1 ]   (162,73.67) .. controls (146.67,67.67) and (150.67,46.33) .. (161.33,44.33) ;
\draw [color={rgb, 255:red, 208; green, 2; blue, 27 }  ,draw opacity=1 ]   (490.67,43.67) .. controls (512,43) and (514.67,72.33) .. (491.33,73) ;
\draw    (470.67,11) -- (470.67,21) ;
\draw    (470.67,100.33) -- (470.67,201) ;
\draw    (470.67,201) -- (470,290.33) ;
\draw    (500.67,21) .. controls (580.67,20.33) and (581.33,99) .. (500.67,100.33) ;
\draw    (441.33,11.67) -- (440.67,200.33) ;
\draw    (441.33,279) -- (441.33,291) ;
\draw    (536,200.33) .. controls (582,201) and (581.33,279.67) .. (536.67,279) ;
\draw    (399.33,11.67) -- (399.33,31) ;
\draw    (400.67,84.33) -- (401.33,197) ;
\draw    (400.67,271) -- (400.67,291) ;
\draw    (399.33,31) -- (478.67,31.67) ;
\draw    (400.67,84.33) -- (479.33,83.67) ;
\draw    (496.67,31.67) .. controls (570,31.67) and (555.33,83.67) .. (497.33,83.67) ;
\draw    (147.33,197.67) -- (401.33,197) ;
\draw    (151.33,270.33) -- (400.67,271) ;
\draw    (147.33,197.67) .. controls (134.67,211.67) and (132.67,249.67) .. (151.33,270.33) ;
\draw    (320,10.33) -- (320.67,290.33) ;
\draw    (270,11) -- (270,20.33) ;
\draw    (271.33,204.33) -- (490.67,205.67) ;
\draw    (271.33,263) -- (271.33,289) ;
\draw    (271.33,263) -- (492,263.67) ;
\draw    (510,206.33) .. controls (546.67,207.67) and (547.33,263.67) .. (511.33,264.33) ;
\draw    (109.33,20) -- (270,20.33) ;
\draw    (270.33,100.33) -- (271.33,204.33) ;
\draw    (110,100.67) -- (270.67,101) ;
\draw    (109.33,20) .. controls (75.33,34.33) and (79.33,85.67) .. (110,100.67) ;
\draw    (120.67,11.33) -- (120.67,30.33) ;
\draw    (120,89.67) -- (120,290.33) ;
\draw    (120.67,30.33) -- (169.33,30.33) ;
\draw    (70.67,11.33) -- (71.33,210.33) ;
\draw    (120,89.67) -- (171.33,90.33) ;
\draw    (169.33,30.33) .. controls (220.67,30.33) and (214.67,88.33) .. (171.33,90.33) ;
\draw    (71.33,210.33) -- (172,210.33) ;
\draw    (68,260.33) -- (170.67,261) ;
\draw    (68,260.33) -- (68,289) ;
\draw    (172,210.33) .. controls (207.33,211.67) and (207.33,261) .. (170.67,261) ;
\draw    (10.67,10.33) -- (10,289.67) ;
\draw    (640,11) -- (639.33,290.33) ;
\draw    (470.67,21) -- (500.67,21) ;
\draw    (470.67,100.33) -- (500.67,100.33) ;
\draw    (478.67,31.67) -- (496.67,31.67) ;
\draw    (479.33,83.67) -- (497.33,83.67) ;
\draw    (440.67,200.33) -- (536,200.33) ;
\draw    (441.33,279) -- (536.67,279) ;
\draw    (490.67,205.67) -- (510,206.33) ;
\draw    (492,263.67) -- (511.33,264.33) ;
\draw  [fill={rgb, 255:red, 255; green, 255; blue, 255 }  ,fill opacity=1 ] (157.33,58.33) .. controls (157.33,56.49) and (158.83,55) .. (160.67,55) .. controls (162.51,55) and (164,56.49) .. (164,58.33) .. controls (164,60.17) and (162.51,61.67) .. (160.67,61.67) .. controls (158.83,61.67) and (157.33,60.17) .. (157.33,58.33) -- cycle ;
\draw  [fill={rgb, 255:red, 255; green, 255; blue, 255 }  ,fill opacity=1 ] (486.67,60.33) .. controls (486.67,58.49) and (488.16,57) .. (490,57) .. controls (491.84,57) and (493.33,58.49) .. (493.33,60.33) .. controls (493.33,62.17) and (491.84,63.67) .. (490,63.67) .. controls (488.16,63.67) and (486.67,62.17) .. (486.67,60.33) -- cycle ;
\draw  [fill={rgb, 255:red, 255; green, 255; blue, 255 }  ,fill opacity=1 ] (486.67,237.83) .. controls (486.67,235.99) and (488.16,234.5) .. (490,234.5) .. controls (491.84,234.5) and (493.33,235.99) .. (493.33,237.83) .. controls (493.33,239.67) and (491.84,241.17) .. (490,241.17) .. controls (488.16,241.17) and (486.67,239.67) .. (486.67,237.83) -- cycle ;

\draw (622.67,293.73) node [anchor=north west][inner sep=0.75pt]  [font=\footnotesize]  {$00$};
\draw (464,293.73) node [anchor=north west][inner sep=0.75pt]  [font=\footnotesize]  {$01$};
\draw (432.67,293.4) node [anchor=north west][inner sep=0.75pt]  [font=\footnotesize]  {$10$};
\draw (392.67,293.4) node [anchor=north west][inner sep=0.75pt]  [font=\footnotesize]  {$02$};
\draw (312.67,294.73) node [anchor=north west][inner sep=0.75pt]  [font=\footnotesize]  {$11$};
\draw (263.33,294.07) node [anchor=north west][inner sep=0.75pt]  [font=\footnotesize]  {$20$};
\draw (111.33,293.4) node [anchor=north west][inner sep=0.75pt]  [font=\footnotesize]  {$12$};
\draw (60,293.4) node [anchor=north west][inner sep=0.75pt]  [font=\footnotesize]  {$21$};
\draw (12,293.07) node [anchor=north west][inner sep=0.75pt]  [font=\footnotesize]  {$22$};
\draw (350.67,23.4) node [anchor=north west][inner sep=0.75pt]    {$\textcolor[rgb]{0.82,0.01,0.11}{S_{1}}$};
\draw (351.33,232.07) node [anchor=north west][inner sep=0.75pt]    {$\textcolor[rgb]{0.82,0.01,0.11}{S}\textcolor[rgb]{0.82,0.01,0.11}{_{2}}$};
\end{tikzpicture}
\caption{Hypersurfaces in Definition-Proposition~\ref{prop:hypsurfaces2} when $N=2$.}
\label{fig:prop:hypsurfaces2}
\end{figure}
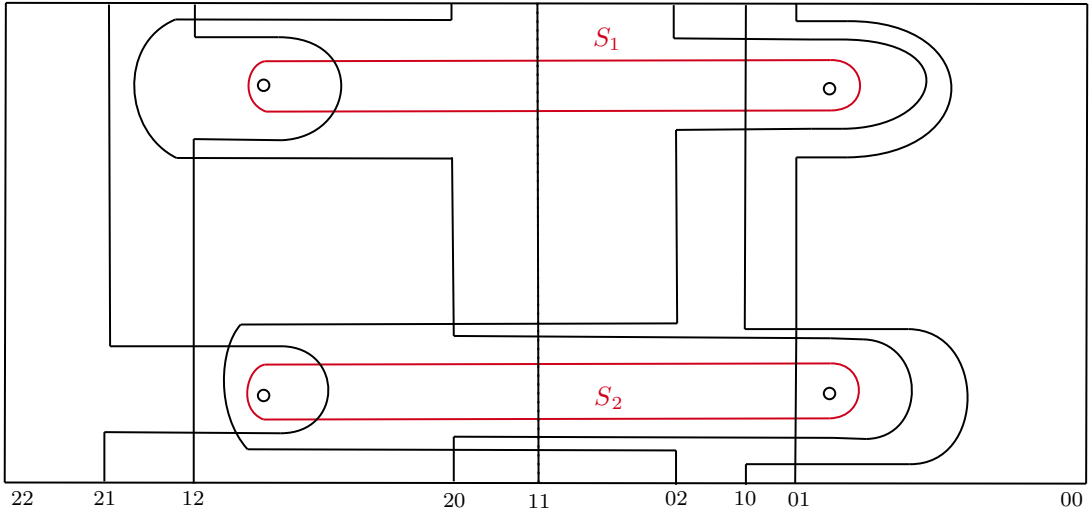 

To construct $ S_k$, recall in the discussion on a single crossing that is a $4$-ball defined as the regular neighbourhood of 
\[
	\left[\frac12, \frac32\right] \times \delta, 
\]
where $\delta$ is the corresponding core of of the two dimensional $1$-handle in $\Sigma$.
Let $ S_k$ be the resulting boundary $3$-sphere, arranged to not to intersect any $ Y_v$, for $|v|_{\infty} = 2$.
In particular, $ S_k \cap \Sigma$ is an unknot and bounds a M\"{o}bius band.

By construction, a pair of hypersurfaces $ Y_v, Y_{v'}$ are disjoint if they are ordered, and intersect at a $2$-sphere if not ordered.
Moreover, since $ S_k$ contains both critical points in the $B_k$ ball, $ Y_v \cap  S_k \ne \emptyset$ if and only if $v_k = 1$.
As a final remark, the double branched covers of $\{ Y_v,  S_k\}$ along $\Sigma$ satisfy exactly the same properties as \cite[Proposition~5.1]{Bloom2011}. 
\end{proof}

We modify the hypersurfaces in Proposition~\ref{prop:hypsurfaces2} as follows.
\begin{defn-prop}
	\label{prop:hypersurfaces2o}
	Within $ W^o = W_{\bftwo,\bfzero}^o$, there exist $(2 \times 3^N-2)$ hypersurfaces $ Y_{w} \cong S^3$, for $w \in  \bbZ^{N+1}$ such that $\bfzero < w < \bftwo$.
	There are $N$ pairwise disjoint hypersurfaces $ S_k \cong S^3$.
	Each $ Y_{w} \cap \Sigma$ is a link $K_w$, and $ S_k \cap \Sigma$ is a unknot bounding a M\"{o}bius band.
	These hypersurfaces satisfy the following.
	\begin{itemize}
		\itemindent=-13pt
		\item $ Y_{w^1},\dots,  Y_{w^m}$ are pairwise disjoint if $w^1 < w^2 < \dots < w^m$.
		\item The intersection of $ Y_v$ and $ Y_u$ consists of $2$-spheres, possibly empty.
		\item The intersection of $ Y_v$ and $ S_k$ is nonempety if and only if $v_k = 1$. 
	\end{itemize}
Each $ Y_{w} \cap \Sigma$ is a link $K_w$, where $K_w \subset  Y_{w}$ is isomorphic to the link corresponding to the first $N$ coordinates of $w$.
Moreover, $ S_k \cap \Sigma$ is a unknot bounding a M\"{o}bius band.
Suppose $w > w'$. 
Let $ W^o_{w,w'}$ be the submanifold bounded by $ Y_{w}$ and $ Y_{w'}$.
Then the pair $( W^o_{w,w'},  W^o_{w,w'} \cap \Sigma^o)$ is isomorphic to  $(S^3 \times I, \Sigma_{w,w'})$ if $w_{N+1} = w'_{N+1}$, and isomorphic to  $((S^3 \times I)\setminus  B_o, \Sigma^o_{w,w'})$ otherwise.
\end{defn-prop} 
\begin{proof}
	For this proposition, we need to modify the family of $\{ Y_v\}$ to a new family $\{ Y_{w}\}$, where $w = v*$, for $* \in \{0,1\}$.
	Suppose $v$'s are totally ordered as in Proposition~\ref{prop:hypsurfaces2} and $t_v$ are chosen. 
	We totally order $\{w\}$ by asserting $v1$ is less than $v0$.
	Define $t_{v0}=t_v$ and $t_{v1} = t_v - \epsilon$, where $\epsilon > 0$ is sufficiently small.
	Let $Y_{v0} = Y_v$.
	Over $[0,2]\times ( Y \setminus B_o)$, we take $ Y_{v1}$ to be a $(-\epsilon)$-translated copy of $Y_v$, slightly shrank on the $B_k$ regions.
	We choose a sequence of neighbourhoods labelled by $v$, whose size decreases with respect to the order, of 
	\[
		\left[\frac14-\epsilon,t_{v1}\right] \times B_o.
	\]
	We attach the $t < t_{v1}$ portion of the boundary of the neighbourhood to $ Y_{v0}$ (smooth and cut the corners if necessary).
	The new hypersurfaces that goes to the left of $B_o$     is similar to Figure~\ref{fig:prop:hypsurfaces1}.
	The double covers of our picture recovers that of \cite[Section~8]{Bloom2011}.
\end{proof}

We end this section by going back to the case of a single crossing.
\begin{defn-prop}
	\label{prop:hypersurfaces3}
	Let $( W,\Sigma)$ be the cobordism from $( Y,K_3)$ to $( Y,K_0)$.
	Let $( W^o,\Sigma^o)$ be the associated punctured cobordism.
	There are nine hypersurfaces $ Y_{30}$, $ Y_{21}$, $ Y_{20}$, $ Y_{11}$,  $ Y_{10}$, $ Y_{01}$, $ S_{3,1}$, $ S_{2,0}$, and $ R_{3,0}$ in $( W^o,\Sigma^o)$.
	At most three of the nine hypersurfaces can be mutually disjoint.
	\hfill \qedsymbol
\end{defn-prop}
Figure~\ref{fig:prop:hypersurfaces3} demonstrates the nine hypersurfaces in Proposition-Definition~\ref{prop:hypersurfaces3}.
\begin{figure}
\tikzset{every picture/.style={line width=0.75pt}} 
\begin{tikzpicture}[x=0.65pt,y=0.65pt,yscale=-1,xscale=1]
\draw    (10.67,10.33) -- (650,10) ;
\draw    (10,290) -- (650,290) ;
\draw    (10.67,10.33) -- (10,290) ;
\draw    (650,10) -- (650,290) ;
\draw  [fill={rgb, 255:red, 255; green, 255; blue, 255 }  ,fill opacity=1 ] (130,103.33) .. controls (130,101.49) and (131.49,100) .. (133.33,100) .. controls (135.17,100) and (136.67,101.49) .. (136.67,103.33) .. controls (136.67,105.17) and (135.17,106.67) .. (133.33,106.67) .. controls (131.49,106.67) and (130,105.17) .. (130,103.33) -- cycle ;
\draw  [fill={rgb, 255:red, 255; green, 255; blue, 255 }  ,fill opacity=1 ] (333.33,103.33) .. controls (333.33,101.49) and (334.83,100) .. (336.67,100) .. controls (338.51,100) and (340,101.49) .. (340,103.33) .. controls (340,105.17) and (338.51,106.67) .. (336.67,106.67) .. controls (334.83,106.67) and (333.33,105.17) .. (333.33,103.33) -- cycle ;
\draw  [fill={rgb, 255:red, 255; green, 255; blue, 255 }  ,fill opacity=1 ] (543.33,103.33) .. controls (543.33,101.49) and (544.83,100) .. (546.67,100) .. controls (548.51,100) and (550,101.49) .. (550,103.33) .. controls (550,105.17) and (548.51,106.67) .. (546.67,106.67) .. controls (544.83,106.67) and (543.33,105.17) .. (543.33,103.33) -- cycle ;
\draw  [color={rgb, 255:red, 0; green, 0; blue, 0 }  ,draw opacity=1 ][fill={rgb, 255:red, 0; green, 0; blue, 0 }  ,fill opacity=1 ] (70,226.67) .. controls (70,224.83) and (71.49,223.33) .. (73.33,223.33) .. controls (75.17,223.33) and (76.67,224.83) .. (76.67,226.67) .. controls (76.67,228.51) and (75.17,230) .. (73.33,230) .. controls (71.49,230) and (70,228.51) .. (70,226.67) -- cycle ;
\draw    (460,10) -- (460,290) ;
\draw    (230,10) -- (230,290) ;
\draw   (70,100) .. controls (70,61.34) and (193.12,30) .. (345,30) .. controls (496.88,30) and (620,61.34) .. (620,100) .. controls (620,138.66) and (496.88,170) .. (345,170) .. controls (193.12,170) and (70,138.66) .. (70,100) -- cycle ;
\draw   (110,105) .. controls (110,85.67) and (170.14,70) .. (244.33,70) .. controls (318.52,70) and (378.67,85.67) .. (378.67,105) .. controls (378.67,124.33) and (318.52,140) .. (244.33,140) .. controls (170.14,140) and (110,124.33) .. (110,105) -- cycle ;
\draw   (290,105) .. controls (290,85.67) and (352.98,70) .. (430.67,70) .. controls (508.35,70) and (571.33,85.67) .. (571.33,105) .. controls (571.33,124.33) and (508.35,140) .. (430.67,140) .. controls (352.98,140) and (290,124.33) .. (290,105) -- cycle ;
\draw    (180,10) -- (180,190) ;
\draw    (420,10) -- (420,200) ;
\draw    (420,250) -- (420,290) ;
\draw    (70,200) -- (420,200) ;
\draw    (70,250) -- (420,250) ;
\draw    (70,200) .. controls (32.67,202) and (30.67,246.67) .. (70,250) ;
\draw    (70,190) -- (180,190) ;
\draw    (70,260) -- (180,260) ;
\draw    (180,260) -- (180,290) ;
\draw    (70,190) .. controls (12.67,189.33) and (12.67,263.33) .. (70,260) ;
\draw    (20,10) -- (20,180) ;
\draw    (630,10) -- (630,210) ;
\draw    (630,240) -- (630,290) ;
\draw    (70,240) -- (630,240) ;
\draw    (70,210) -- (630,210) ;
\draw    (70,210) .. controls (52,211.33) and (54.67,244) .. (70,240) ;
\draw    (20,270) -- (20,290) ;
\draw    (20,180) -- (110,180) ;
\draw    (20,270) -- (110,270) ;
\draw    (110,180) -- (110,270) ;

\draw (64,52.4) node [anchor=north west][inner sep=0.75pt]    {$S_{30}$};
\draw (141,110.4) node [anchor=north west][inner sep=0.75pt]    {$S_{31}$};
\draw (501,112.4) node [anchor=north west][inner sep=0.75pt]    {$S_{20}$};
\draw (621,292.4) node [anchor=north west][inner sep=0.75pt]    {$Y_{01}$};
\draw (22,293.4) node [anchor=north west][inner sep=0.75pt]    {$Y_{30}$};
\draw (152,292.4) node [anchor=north west][inner sep=0.75pt]    {$Y_{21}$};
\draw (221,292.4) node [anchor=north west][inner sep=0.75pt]    {$Y_{20}$};
\draw (461,292.4) node [anchor=north west][inner sep=0.75pt]    {$Y_{10}$};
\draw (401,292.4) node [anchor=north west][inner sep=0.75pt]    {$Y_{11}$};
\end{tikzpicture}
\centering
\caption{The nine hypersurfaces in Definition-Proposition~\ref{prop:hypersurfaces3}.}
\label{fig:prop:hypersurfaces3}
\end{figure}
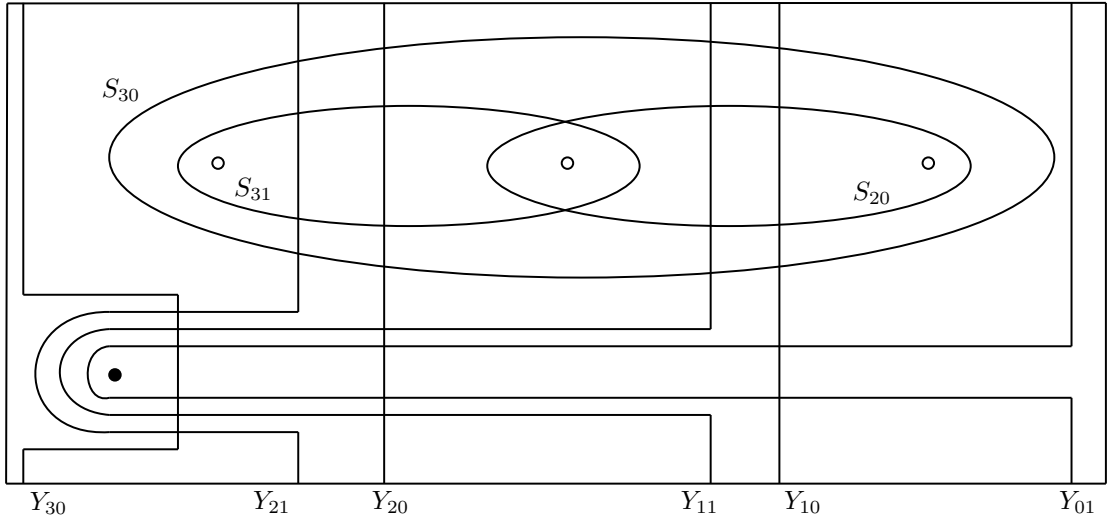

\subsection{Family of metrics}
Let $W_{v*,u*'}^o$ be the double cover of $ W_{v*,u*'}^o$ along $\Sigma_{v*,u*'}^o$, where $*,*' \in \{0,1\}$.
We will consider a family of metrics $P_{v*,u*'}$ over $W_{v*,u*'}^o$, invariant under deck transformation, such that $P_{v*,u*'}$ is a compact polytope.
This family is essentially the same as \cite{Bloom2011}, which can be chosen invariantly.
We will describe the orbifold metrics downstairs.
For the full combinatorial structures, see \cite[Section~2 and~5]{Bloom2011}.

\textbf{M1.} Assume $v = \bfone$ and $u = \bfzero$.
Suppose $( W^o,\Sigma^o)$ is the corresponding cobordism.
Choose an initial orbifold metric $\check g$ that is cylinder near all of the hypersurfaces $( Y_{w},K_w)$ from Proposition~\ref{prop:hypsurfaces2}, where $w \in  \bbZ^N \times \{0,1\}$.
Consider a sequence $w^i$ such that
\[
(1,\dots,1) > w^1 > w^2 > \dots > w^{N} > (0,\dots,0).
\]
We define a $[0,\infty)^N$-family of metrics by inserting necks to $\check g$ as
\[
 W_{\mathbf{1}1,w^1} 
\bigcup_{ Y_{w^1}}
([-T_1,T_1] \times  Y_{w^1})
\bigcup_{ Y_{w^1}}
 W_{w^1,w^2} 
\bigcup_{ Y_{w^2}}
([-T_2,T_2] \times  Y_{w^2})
\bigcup_{ Y_{w^2}}
\cdots
\bigcup_{ Y_{w^N}}
([-T_N,T_N] \times  Y_{w^N})
\bigcup_{ Y_{w^N}}
 W_{w^N,\mathbf{0}0},
\]
where orbifold loci were suppressed in our notations.
By adding broken metrics, we extend $[0,\infty)^N$ to a cube $[0,\infty]^N$.
There are $N!$ such cubes, and these cubes can be glued to a polytope $P_{v1,u0}$ along the codimenison-1 faces.
Indeed, a face corresponding to a sequence $w^1 > \dots > w^{N-1}$ which lies on exactly two $N$-cubes.

Similarly, suppose $v,u \in \bbZ^N$ are such that $v > u$ and $|v-u|_{\infty} \le 1$. 
Recall $( W_{v,u}^o,\Sigma_{v,u}^o)$ can be considered as the submanifold
$( W_{v1,u1}^o,\Sigma_{v0,u0}^o)$ of the cobordism from $\mathbf{1}$ to $\mathbf{0}$ (up to translation of the indices).
In the same way above, we construct a family of metrics on $( W_{v*,u*'}^o,\Sigma_{v*,u*'}^o)$, parmetrized by a  $(|v|_1-|u|_1 + |*-*'|- 1)$-dimensional polytope $P_{v*,u*'}$.
That is, we build the polytopes from cubes and in each cube stretch necks around the hypersurfaces $ Y_{w^i}$, where
\[
	(v,*) > w^1 > w^2 > \dots > w^{N'} > (u,*').
\]

\textbf{M2.}
Assume $v = \bftwo$ and $u = \bfzero$, and equip the corresponding $( W^o,\Sigma^o)$ with an initial orbifold metric $\check g$.
Suppose $\check g$ is cylindrical in some neighbourhoods of hypersurfaces in Proposition~\ref{prop:hypersurfaces2o}.
The polytope $P_{v,u}$ is built from $2N$-dimensional cubes $[0,\infty]^{2N}$.
Each cube corresponds to stretching $\check g$ along $2N$ mutually disjoint hypersurfaces.
In other words, each collection of such hypersurfaces will be of the form
\[
 Y_{w^1},\dots,  Y_{w^{N_Y}},  S_{k_1},\dots, S_{k_{N_S}},
\]
where $\mathbf{2}1 > w^1 > \dots > w^{N_Y} > \mathbf{0}0$ and the $k_i$-th coordinate of all $w^j$ are $0$ or $2$.
We obtain $P_{v,u}$ by gluing along  $(2N-1)$ dimensional faces.

By restricting to hypersurfaces in submanifolds, we obtain $P_{v*,u*'}$ for $v,u \in \bbZ^N$ such that $|v - u|_{\infty} \le 2$ and $v* > u*'$.
In particular, the dimension is $2m + (N-m) + |*-*'|_1 - 1$, where $m$ is the number of coordinate $i$ for which $|v_i - u_i| = 1$.

\textbf{M3.}
Assume there is a single crossing.
Let $( W^o,\Sigma^o)$ be the punctured cobordism from $( Y,K_3)$ to $( Y,K_0)$.
Start with an orbifold metric $( W^o,\Sigma^o)$ that is product-like near the $9$ hypersurfaces:
$( Y_{30}, K_{30})$, $( Y_{21},K_{21})$, $( Y_{20},K_{20})$, $( Y_{11},K_{11})$,  $( Y_{10},K_{10})$, $( Y_{01},K_{01})$, $( S_{3,1}, \U_1)$, $( S_{2,0},\U_1)$, and $( R_{3,0}, \U_2)$.
Here $\U_1$ denotes a unknot and $\U_2$ denotes a $2$-component unlink.
We stretch the three necks over a cube $[0,\infty]^3$ and glue the resulting cubes to form a $3$-dimensional polytope $P_{31,00}$, and two $2$-dimensional polytopes $P_{30,00}$ and $P_{31,01}$.
The $2$-dimensional polytopes are pentagons.
For example, $P_{31,01}$ has five edges, corresponding to metrics that are broken along the hypersurfaces $ Y_{21},  Y_{11}, S_{3,1},  S_{2,0},  R_{3,0}$, respectively.

\subsection{Identifying $E_{\infty}$}
Assume $v \ge u$ are in $\bbZ^N$. Define
\[
\bfC[v,u] = 
\bigoplus_{v \ge y \ge u} \widetilde{C}(K_y)
\]
and 
\[
\bfF[v,u] = \sum_{v \ge v' \ge u' \ge u} f_{v',u'}: \bfC \to \bfC
\] 
Given $v \ge u$ in $\bbZ^N$, where $P_{v,u}$ is well-defined.
Recall that $\tilde C(K_{v})=\check C(K_{v1}) \oplus \check C(K_{v0})$.
Then $f_{v,u}$ is of the form
\[
	\begin{bmatrix}
		f_{v1,u1} & f_{v0,u1}\\
		f_{v1,u_0} & f_{v0,u_0}
	\end{bmatrix},
\]
where $f_{v*,u*'}:\check C(K_{v*}) \to \check C(K_{v*'})$ and $f_{v0,u1} = 0$.
In what follows, the superscript $\star \to \star'$ of $f$ denotes the types $\star,\star' \in \{o,s,u\}$ of critical points in the sources and targets, respectively.
In the decomposition, $\check C(K_{w}) = C^o(K_{w}) \oplus C^s(K_{w})$, we write
\[
	f_{v*,u*'} =
	\begin{bmatrix}
	f_{v*,u*'}^{o \to o} 
	&	
	\sum_{v* \le w \le u*'} f_{w,u*'}^{u \to o}\bar f_{v*,w}^{s \to u}
	\\	
	f_{v*,u*'}^{o \to s} 
	&	\bar{f}_{v*,u*'}^{s \to s} + \sum_{v* \le w \le u*'}f_{w,u*'}^{u \to s} \bar f_{v*,w}^{s \to s}
	\end{bmatrix}.
\]
For arbitrary $v > u$ and $* = *$, the map $f_{v*,u*}^{\star \to \star'}$ is defined by the mod-$2$ counts of the parameterized moduli spaces
\[
	M_z^+(\fraka;  W_{v*,u*}, \Sigma_{v*,u*};\frakb)_{P_{v*,u*}},
\]
where $\fraka$ and $\frakb$ are critical points of types $\star$ and $\star'$, respectively.
The count is zero if the dimension of $M_z$ is nonzero.
Note that in this case $ W_{v*,u*} =  W_{v*,u*}^o$ has no puncture.
Otherwise, we define $f_{v1,u0}^{\star \to \star'}$ to be the count of 
\[
M_z(\fraka, \frakc_{-2};  W^o_{v1,u0}, \Sigma_{v1,u0};\frakb)_{P_{v1,u0}},
\]
where this notation means we view $( W^o_{v1,u0}, \Sigma^o_{v1,u0})$ as a $3$-ended cobordism from $( Y_{v1},K_{v1}) \sqcup ( S_o, \U_1)$ to $( Y_{u0},K_{u0})$, and insert the critical point $\frakc_{-2}$ on the $( S_o, \U_1)$ end.
Similarly, $\bar{f}_{v*,u*}^{\star \to \star'}$ is defined using the reducible moduli space 
\[
M_z^{\red}(\fraka;  W_{v*,u*}, \Sigma_{v*,u*};\frakb)_{P_{v*,u^*}}, \ \text{or }  M_z^{\red}(\fraka, \frakc_{-2};  W^o_{v1,u0}, \Sigma_{v1,u0};\frakb)_{P_{v1,u0}}.
\]
If $v=u$, then we replace $( W^o_{v,u},\Sigma^o)$ with the cylinder $([0,1] \times  Y \setminus  B_o, [0,1] \times K_v \setminus  B_o)$ in definitions of $f_{v*,u*}^{\star \to \star'}$ and $\bar f_{v*,u*}^{\star \to \star'}$ so that the moduli spaces $M_z$ and $M_z^{\red}$ are what show up in the definitions of the differentials of $\widetilde{\HMR}_*( Y,K_v)$ as in Section~\ref{sec:HMRtildedefn}.
\begin{lem}
\label{lem:10_complex}
Let $\bfC = \bfC[\mathbf{1},\mathbf{0}]$ and $\bfF=\bfF[\mathbf{1},\mathbf{0}]$. Then $\bfF^2 = 0$, i.e. $(\bfC,\bfF)$ is a chain complex.
\end{lem}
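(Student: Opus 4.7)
The plan is to follow the standard cobordism-theoretic argument: exhibit each term of $\bfF^2$ as an end of a $1$-dimensional parametrized moduli space, so that the total count vanishes mod~$2$. Concretely, writing
\[
(\bfF^2)_{v*,u*'} \;=\; \sum_{v* \ge w*'' \ge u*'} f_{w*'',u*'}\circ f_{v*,w*''},
\]
for each pair $v* \ge u*'$ in $\{0,1\}^N \times \{0,1\}$ and each choice of asymptotic critical points $\fraka$ over $K_{v*}$ and $\frakb$ over $K_{u*'}$, I would form the compactified one-dimensional parametrized moduli space
\[
\overline{M}^+_z\bigl(\fraka,\,[\frakc_{-2}],\,\Sigma^o_{v*,u*'},\,\frakb\bigr)_{P_{v*,u*'}},
\]
where the insertion of $\frakc_{-2}$ on the $(S_o,\U_1)$ end is understood to be present precisely when the associated cobordism carries the puncture, i.e.\ when $* \ne *'$. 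The key structural input is that the family $P_{v*,u*'}$ from step~M1 is itself a polytope whose codimension-one faces correspond to broken metrics along intermediate hypersurfaces $Y_{w*''}$.

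The codimension-one boundary of this $1$-manifold decomposes into three contributions: (i) broken-metric faces of $P_{v*,u*'}$ along some intermediate $Y_{w*''}$ with $v* > w*'' > u*'$; (ii) trajectory breaking at the incoming end $Y_{v*}$ or the outgoing end $Y_{u*'}$; and (iii) reducibles escaping to the blow-up boundary of the configuration space. Contribution~(i) contributes exactly $\sum_{w*''} f_{w*'',u*'} \circ f_{v*,w*''}$, where the coordinate $w_{N+1}''$ records on which side of the break the puncture/$\frakc_{-2}$ end lies; this reproduces the block matrix structure of $f_{v*,u*'}$ and in particular explains why a factorization through an intermediate $f^{u \to \ast}_{w,u*'} \bar f^{s \to u}_{v*,w}$ appears in the off-diagonal entries. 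Contributions (ii) and (iii), analyzed together, assemble into $f_{v*,u*'} \circ \check\del_{K_{v*}} + \check\del_{K_{u*'}} \circ f_{v*,u*'}$; the identity $\check\del^2 = 0$ on each $\widetilde{C}(K_v)$ (already established in Section~\ref{sec:HMRtildedefn}) then forces these end contributions to cancel pairwise.

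The principal obstacle is the bookkeeping for the reducible locus, where one must check that the broken configurations occurring at faces of $P_{v*,u*'}$ match exactly the $\partial^u_\ast \bar f^{s \to u}_\ast$ corrections already built into the matrix entries of $f_{v*,u*'}$ and of $\check\del$. This requires a case analysis depending on whether $\fraka$ and $\frakb$ are interior, boundary-stable, or boundary-unstable, and on whether the face in question is a trajectory-breaking end or a metric-breaking face of $P_{v*,u*'}$. The analysis is formally parallel to \cite[Proposition~25.3.4]{KMbook2007} and \cite[Section~7]{Bloom2011}; the only real-setting adaptation is that all moduli spaces are restricted to $\upiota$-invariant solutions, which remain transverse for the perturbations chosen in \cite{ljk2022}. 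With this dictionary in place, the three contributions cancel and the lemma follows.
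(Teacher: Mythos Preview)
Your approach is the same as the paper's: enumerate the ends of the one–dimensional parametrized moduli spaces over the \textbf{M1} polytopes and identify them with the terms of $\bfF^2$. The paper's proof is in fact briefer than yours—it simply cites \cite[Lemma~4.7 and~8.2]{Bloom2011} and \cite[Lemma~9.13, Proposition~12.6]{ljk2022}—so your added detail on the three boundary contributions is a welcome elaboration.

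There is, however, one logical slip in how you conclude. You correctly identify contributions~(ii) and~(iii) as assembling into $f_{v*,u*'}\circ\check\del_{K_{v*}} + \check\del_{K_{u*'}}\circ f_{v*,u*'}$, but you then claim that ``the identity $\check\del^2=0$\dots forces these end contributions to cancel pairwise.'' This is not right: $\check\del^2=0$ says nothing about $f\check\del + \check\del f$ for an arbitrary $f$. Rather, since $f_{v*,v*}=\check\del$ by definition, the end contributions (ii)+(iii) are precisely the $w*''=v*$ and $w*''=u*'$ summands in
\[
(\bfF^2)_{v*,u*'}=\sum_{v*\ge w*''\ge u*'} f_{w*'',u*'}\circ f_{v*,w*''},
\]
while contribution~(i) supplies the strictly intermediate summands. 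Thus (i)+(ii)+(iii) taken together constitute \emph{all} of $(\bfF^2)_{v*,u*'}$, and the vanishing of the total boundary count mod~$2$ gives $(\bfF^2)_{v*,u*'}=0$ directly. The identity $\check\del^2=0$ (equivalently $\tilde\del^2=0$) is only needed for the diagonal entries $v*=u*'$, which are handled separately. Once you rephrase the last step this way, the argument is complete.
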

\begin{proof}
The proof makes use of the family of metrics in $\mathbf{M1}$. The arguments are identical to \cite[Lemma~4.7 and 8.2]{Bloom2011}.
One enumerates the boundaries of the $1$-dimensional compactified moduli spaces and the perspective reducible versions
\[
M_z^+(\fraka;  W_{v*,u*}, \Sigma_{v*,u*};\frakb)_{P_{v*,u^*}}, \quad
M_z^+(\fraka, \frakc_{-2};  W^o_{v1,u0}, \Sigma_{v1,u0};\frakb)_{P_{v1,u0}}.
\]
The resulting maps coincide with the entries of $\bfF^2$.
The lemma then follows from the fact that the number of boundary points of the parametrized moduli space is zero modulo two by \cite[Lemma~9.13 and Proposition~12.6]{ljk2022}.
\end{proof}
\begin{lem}
	\label{lem:20_complex}
	$(\bfC[\bftwo,\bfzero],\bfF[\bftwo,\bfzero])$ is a complex.
\end{lem}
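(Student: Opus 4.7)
The plan is to mimic the proof of Lemma~\ref{lem:10_complex}, with the family $\mathbf{M1}$ replaced by $\mathbf{M2}$ and the polytopes $P_{v*,u*'}$ now having dimension $2m+(N-m)+|*-*'|_1-1$. For each pair $v\ge u$ in $[\bftwo,\bfzero]$ and each pair of critical points $(\fraka,\frakb)$ for which the parametrized moduli space has expected dimension $1$, I would form the compactified $1$-manifolds
\[
M_z^+(\fraka; W_{v*,u*'}, \Sigma_{v*,u*'}; \frakb)_{P_{v*,u*'}}, \qquad
M_z^+(\fraka, \frakc_{-2}; W^o_{v1,u0}, \Sigma_{v1,u0}; \frakb)_{P_{v1,u0}},
\]
together with their reducible counterparts. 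By \cite[Lemma~9.13 and Proposition~12.6]{ljk2022} each such $1$-manifold has an even number of boundary points, and the claim follows once these boundary counts are identified with the matrix entries of $\bfF[\bftwo,\bfzero]^2$.

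The boundary strata split into three types: (i)~Floer-trajectory breaking at the cylindrical ends, which contributes $\widetilde\del\circ \bfF[\bftwo,\bfzero]+\bfF[\bftwo,\bfzero] \circ \widetilde \del$; (ii)~stretching a neck around a hypersurface $Y_w$ from Proposition-Definition~\ref{prop:hypsurfaces2} or~\ref{prop:hypersurfaces2o}, which factors the count through an intermediate link to give a composition $f_{w,u*'}\circ f_{v*,w}$; and (iii)~stretching a neck around a M\"obius-band hypersurface $S_k$, which is the genuinely new feature compared to Lemma~\ref{lem:10_complex} and occurs whenever $v_k=2$ and $u_k=0$. Types (i) and (ii) fit into $\bfF[\bftwo,\bfzero]^2$ exactly as in the $\mathbf{M1}$ argument, the only change being that the parametrizing polytopes are now higher-dimensional.

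The principal new ingredient is the treatment of (iii): stretching along $S_k$ splits the cobordism along $(S^3,\U_1)$, and the piece containing the M\"obius band lifts in the branched double cover to the $4$-manifold $\bbN$ of Proposition~\ref{prop:double_cover_triangle}. Pairing real Seiberg-Witten solutions on $\bbN$ with the reducible $\frakc_{-2}$ on its $\U_1$-end reproduces, by the triangle analysis of \cite[Theorem~1.1]{ljk2023triangle}, exactly the cobordism map that takes the $2$-resolution at coordinate $k$ to the $0$-resolution via the intermediate $1$-resolution; this is already one of the compositions appearing among the entries of $\bfF[\bftwo,\bfzero]^2$ with source $v$ and target $u$. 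The main obstacle will be the combinatorial bookkeeping when several coordinates admit $S_k$-breakings simultaneously or when $S_k$- and $Y_w$-faces coexist in the same $1$-dimensional moduli space; to handle this, I would exploit the polytope structure of $P_{v*,u*'}$ in $\mathbf{M2}$, where each codimension-$1$ face is labelled by a single maximal ordered collection of pairwise disjoint hypersurfaces and hence shared by exactly two adjacent maximal cubes, so every boundary stratum is paired exactly once and contributes exactly one term of $\bfF[\bftwo,\bfzero]^2$.
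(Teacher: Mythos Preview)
Your handling of boundary types (i) and (ii) is fine and matches the paper, but type (iii) --- the $S_k$-facets --- is where the argument goes wrong.

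First, you have misidentified the piece cut off by $S_k$. The hypersurface $S_k$ is the boundary of the ball $B_{i,i-2}$ in Section~\ref{sec:top_skein_cob}; its branched double cover is $\bbB_{i,i-2}\cong\overline{\mathbb{CP}}^2\setminus B^4$ with the complex-conjugation involution, \emph{not} the manifold $\bbN=(D^2\times S^2)\#\overline{\mathbb{CP}}^2$ of Proposition~\ref{prop:double_cover_triangle}. The latter is bounded by $R_{3,0}$ and only appears in the \emph{triple} composite, which is used in Proposition~\ref{prop:triangle}, not here. Relatedly, the critical point $\frakc_{-2}$ is the asymptotic condition at the puncture $(\partial B_o,\U_1)$, not at the $S_k$-end; it plays no role in analysing the $S_k$-facet.

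Second, and more seriously, even if your identification were correct, your conclusion would not prove $\bfF[\bftwo,\bfzero]^2=0$. You assert that the $S_k$-contribution \emph{equals} a composition already appearing in $\bfF^2$. But the $Y_w$-facets already account for every composition $f_{w,u*'}\circ f_{v*,w}$; if an $S_k$-facet produced another copy of one of these, the boundary identity would read $0=\bfF^2+(\text{that term again})$, hence $\bfF^2\ne 0$. What you actually need is that the $S_k$-facet contributes \emph{zero} modulo two. The paper's argument is that the real monopoles on the $\overline{\mathbb{CP}}^2\setminus B^4$ piece admit a fixed-point free involution (coming from the two real $\text{spin}^c$ structures with $c_1=\pm e$, exchanged by a symmetry preserving the conjugation action), so their count is even. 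This is the essential new input over Lemma~\ref{lem:10_complex}, and without it the bookkeeping you describe in the last paragraph cannot close.
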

\begin{proof}
The proof uses of the family of metrics in $\mathbf{M2}$.
The new phenomenon arises from the neck stretching along the hypersurfaces $ S_k$.
As noted in Propositon~\ref{prop:double_cover_triangle}, $ S_k$ bounds a $4$-ball in which the branch locus is a $\mathbb{RP}^2$ having self-intersection $(+2)$.
Moreover, the covering involution on the branched double cover is equivalent to the standard conjugation action on $\overline{\mathbb{CP}}^2 \setminus \text{int}(\textsf{Ball})$.
It follows that there is a fixed-point free symmetry on the set of monopoles over the facet corresponding to the metrics broken along $ S_k$.
Modulo two, this facet contributes trivially.
The techniques of \cite[Lemma~8.2]{Bloom2011} can be adapted to conclude $\bfF[\bftwo,\bfzero]^2=0$.
See the discussion following \cite[Remark~8.3]{Bloom2011}.
\end{proof}
The next proposition is the $\widetilde{\HMR}_{\bullet}$ version of unoriented skein exact triangle.
Assume $K_2,K_1,K_0$ are related by a single skein relation, where a based point $\infty$ is fixed.
\begin{prop}
\label{prop:triangle}
There is a quasi-isomorphism from the chain complex $\widetilde C(K_{2})$ to the mapping cone of the cobordism map $f_{1,0}:\widetilde C(K_{1}) \to \widetilde C(K_{0})$, induced by $\Sigma_{1,0}: K_1 \to K_0$.
In homology, there is a $3$-periodic long exact sequence
\[
\begin{tikzcd}
	\cdots \ar[r,"f_{4,3}"] 
	&
	\widetilde{\HMR}_{\bullet}(K_3) \ar[rr,"f_{3,2}=f_{0,-1}"] 
	&
	&
	\widetilde{\HMR}_{\bullet}(K_2) \ar[r,"f_{2,1}"] 
	&
	\widetilde{\HMR}_{\bullet}(K_1) \ar[r,"f_{1,0}"] 
	&
	\widetilde{\HMR}_{\bullet}(K_0)
	\ar[r,"f_{0,-1}"] 
	&
	\cdots
\end{tikzcd}.
\]
\end{prop}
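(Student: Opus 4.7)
The plan is to follow Bloom's strategy \cite{Bloom2011} for the skein exact triangle in $\widetilde{\HM}$, transported to the real setting and using the family of metrics $\mathbf{M3}$ associated with the nine hypersurfaces of Definition-Proposition~\ref{prop:hypersurfaces3}. The key organising object is the pentagonal $2$-dimensional polytope $P_{20,00}$ (and its punctured analogue $P_{21,01}$) parameterising metrics on the triple cobordism $(W^o,\Sigma^o)\colon(Y,K_2)\to(Y,K_0)$, broken successively along $Y_{11}$, $Y_{10}$, $S_{2,0}$, $R_{2,0}$, $Y_{21}$ as one walks around the pentagon.

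First I would define, on the chain level, the candidate quasi-isomorphism
\[
\psi \;=\; \bigl(f_{2,1}\,,\,h_{2,0}\bigr)\colon \widetilde{C}(K_2)\ \longrightarrow\ \mathrm{Cone}\bigl(f_{1,0}\bigr) \;=\; \widetilde{C}(K_1)\oplus\widetilde{C}(K_0),
\]
where $f_{2,1}$ and $f_{1,0}$ are the cobordism maps from the previous sections, and $h_{2,0}\colon\widetilde{C}(K_2)\to\widetilde{C}(K_0)$ is the \emph{pentagon map} built as a block matrix exactly as in the definition of $f_{v*,u*'}$, but with all moduli spaces now parameterised by the pentagon $P_{20,00}$ (respectively $P_{21,01}$ on the punctured cobordism, to account for the $\upsilon$-insertion in $\widetilde{\del}$). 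Next I would check that $\psi$ is a chain map, i.e.\ that
\[
f_{1,0}\circ f_{2,1} \;=\; \widetilde{\del}_{K_0}\circ h_{2,0} + h_{2,0}\circ\widetilde{\del}_{K_2},
\]
by enumerating the codimension-one faces of $P_{20,00}$ and $P_{21,01}$. Three of the five edges contribute precisely the terms above (breaking along $Y_{11}$, $Y_{10}$, and $Y_{21}$ produces the two sides of the identity by the usual gluing/Stokes argument of \cite[Lem.~9.13,~Prop.~12.6]{ljk2022}).

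The main step, and the main obstacle, is to show that the remaining two edges, corresponding to neck-stretching along $S_{2,0}$ and along $R_{2,0}$, contribute trivially modulo two. By Proposition~\ref{prop:double_cover_triangle}, the branched double cover across $S_{2,0}$ is $\overline{\mathbb{CP}}^2\setminus B^4$ with the involution acting as complex conjugation, and the double cover of the piece bounded by $R_{2,0}$ is $\bbN=(D^2\times S^2)\#\overline{\mathbb{CP}}^2$. In Bloom's setting, cancellation along $R_{3,0}$ uses the $S^1$-action on the $D^2\times S^2$ factor, which in general does \emph{not} commute with the real structure. The point I would have to verify carefully is that in each case there is a fixed-point-free $\bbZ/2$-symmetry on the relevant real Seiberg-Witten moduli space compatible with the given asymptotic data at the four critical points: for $S_{2,0}$ this is the residual $\bbZ/2$ inside the conjugation action on $\overline{\mathbb{CP}}^2$, and for $R_{2,0}$ it is the $(-1)$-involution on the $S^2$ factor lifted to the spinor bundle, which under the real structure reduces to an involution acting freely on reducibles and irreducibles alike because the real locus is one real dimension smaller. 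Granted the fixed-point-freeness, these two facets contribute $0$ mod $2$, and the chain-map identity follows.

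Finally, to upgrade $\psi$ to a quasi-isomorphism, I would exploit the $3$-fold cyclic symmetry of Figure~\ref{fig:arcintetrahedra} and the convention $K_j=K_{j+3}$ to construct analogous chain maps $\psi_{1}\colon\widetilde{C}(K_1)\to\mathrm{Cone}(f_{0,-1})$ and $\psi_{0}\colon\widetilde{C}(K_0)\to\mathrm{Cone}(f_{-1,-2})$, together with the higher homotopy parameterised by the $3$-dimensional polytope $P_{31,00}$ of $\mathbf{M3}$. Standard homological algebra of exact triangles (compare \cite[Lem.~4.2]{Bloom2011}) then shows that the composition $\mathrm{Cone}(\psi)\simeq\widetilde{C}(K_0)[1]$ up to a contractible summand, and in particular $\psi$ induces an isomorphism in homology. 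The $3$-periodic long exact sequence follows immediately from the mapping cone long exact sequence applied to $\psi$ together with the cyclic identification $K_{j+3}=K_j$.
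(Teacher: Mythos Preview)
Your outline has the right ingredients but misplaces them in a way that breaks the argument. First, the polytope governing the double composite $K_2\to K_0$ is not a pentagon and contains no $R$-hypersurface. In the notation of \textbf{M2} and Definition-Proposition~\ref{prop:hypsurfaces2}, the only hypersurfaces between $Y_2$ and $Y_0$ are $Y_1$ and $S_1$; since these intersect, $P_{20,00}$ is a $1$-dimensional interval (with the punctured $P_{21,00}$ a square), and the null-homotopy $h_{2,0}=f_{2,0}$ is exactly the content of Lemma~\ref{lem:20_complex}. Only the $S_1$-face needs to vanish here, via the conjugation symmetry on $\overline{\mathbb{CP}}^2\setminus B^4$. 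The pentagons of \textbf{M3} live on the \emph{triple} composite $K_3\to K_0$; your ``$R_{2,0}$'' does not exist.

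Second, and more seriously, the $R_{3,0}$-face does \emph{not} contribute trivially --- it is precisely the source of the quasi-isomorphism. When the metric degenerates along $(R_{3,0},\U_2)$, the cobordism splits as $(B_{3,0},M_{3,0})$ together with a punctured cylinder on $(Y,K_3)=(Y,K_0)$; counting monopoles on the $(B_{3,0},M_{3,0})$ piece (over the interval stretching $S_{3,1}$ versus $S_{2,0}$) produces an element $\bar n_u$, and inserting it into the cylinder-with-puncture yields an operator $L_{3,0}$ whose diagonal entries are the quasi-isomorphisms already established in the ``to''-flavour triangle \cite[Lemma~5.9]{ljk2023triangle}. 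Your proposed free $\bbZ/2$ on the $\bbN$-piece would kill exactly the term you need to survive. The algebraic organisation then follows \cite[Theorem~2.4]{KMOS2007}: $f_{i,i-1}\circ f_{i+1,i}$ is null-homotopic via $f_{i+1,i-1}$ (Lemma~\ref{lem:20_complex}), and the sum $f_{1,0}\circ f_{3,1}+f_{2,0}\circ f_{3,2}$ differs from the chain-homotopy boundary of $f_{3,0}$ by $L_{3,0}$, which is a quasi-isomorphism.
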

\begin{proof}
Just like \cite[Theorem~2.4]{KMOS2007}, the proof is based on an algebraic lemma, where it suffices to verify $f_{i+1,i} \circ f_{i,i-1}$ is homotopically trivial by a chain homotopy $H_{i+2,i}:\tilde C(K_{i+2}) \to \tilde C(K_{i})$, and that $f_{i+1,i} \circ H_{i+3,i+1} + H_{i+2,i} \circ f_{i+3,i+2}$ is a quasi-isomorphism.
The first assertion is a consequence of Lemma~\ref{lem:20_complex}, where $H_{i+2,i}$ is precisely $f_{i+2,i}$.
Indeed, the lemma reads
\[
	f_{i,i} \circ f_{i+2,i} + f_{i+2,i} \circ f_{i+2,i+2} = f_{i+1,i} \circ f_{i+2,i+1},
\]
where $f_{i,i}$ is the differential of $\tilde C(K_{i})$.
To see the second assertion, we consider the $P_{31,00}$ family metrics on $( W_{31,00}^o,\Sigma_{31,00}^o)$ in \textbf{M3}.
The crucial topological fact, as in Proposition~\ref{prop:double_cover_triangle}, is that once the metric is broken along the hypersurface $( R_{3,0},\U_2)$, the cobordism is a union of $( B_{3,0},M_{3,0})$ and a piece which is simply the cylinder $[0,3] \times ( Y,K_3)$ with a ball removed.
It is the $ R_{3,0}$ facet which contributes the quasi-isomorphism to the second assertion.
To this end, let $R \cong [-\infty,+\infty]$ be the subfamily of metrics in $P_{31,00}$ which stretches the hypersurfaces $ S_{3,1}$ and $ S_{2,0}$
Let $\bar n_u \in C^u( R_{3,0},M_{3,0})$ count the monopoles over $R$ which limits to unstable critical points.
We assume appropriate metrics and perturbations on $( R_{3,0},M_{3,0})$ and the cobordisms are chosen, as in  \cite[Section~5]{ljk2023triangle}.
Roughly speaking, this is to ensure all relevant Seiberg-Witten trajectories are reducible.

We define an operator $L_{3,0}:\tilde C(K_3) \to \tilde C(K_0)$ as follows.
Let $L_{31,01}^{* \to *} = f_{31,01}^{u* \to *}( \bar n_u \otimes  \cdot)$ and $\bar L_{31,01}^{* \to *} = \bar f_{31,01}^{u* \to *}(\ \bar n_u \otimes  \cdot)$,
where we insert $\bar n_u$ to the $ R_{3,0}$ end.
Recall the definition of $f_{31,01}$ involves the $1$-dimensional family of metrics $P_{31,01}$.
This is the same setting as the exact triangle in $\HMR^{\circ}_{\bullet}$. 
By \cite[Proposition~5.3]{ljk2023triangle}, we deduce that
\[
f_{01,01}f_{31,01} + f_{31,01}f_{31,31} = 
f_{21,01}f_{31,21} + f_{11,01}f_{31,11} +
L_{31,01}.
\]
Similarly, the operator $L_{30,00}$ concerns only a $2$-dimensional face of $P_{31,00}$ and the analogous identity in the `to'-flavoured triangle is
\[
f_{00,00}f_{30,00} + f_{30,00}f_{30,30} = 
f_{20,00}f_{30,20} + f_{10,00}f_{30,10} +
L_{31,01}.
\]

The off-diagonal entry $L_{31,00}$ of $L$ requires the full $3$-dimensional family $P_{31,00}$.
Let $L_{31,00}^{* \to *} = f_{31,00}^{uu* \to *}(\frakc_{-2} \otimes \ \bar n_u \otimes  \cdot)$ and $\bar L_{31,00}^{* \to *} = \bar f_{31,00}^{uu* \to *}(\frakc_{-2} \otimes \ \bar n_u \otimes  \cdot)$, where $\frakc_{-2}$ is inserted into the boundary $( S_o, \U_1)$.
Now, $L_{31,00}$ counts monopoles over the two dimensional family of metrics broken along $ R_{3,0}$ and stretches three hypersurfaces.
Bloom's argument \cite[Equation~(31)]{Bloom2011} goes over verbatim:
\[
f_{31,31}L_{31,00} + L_{31,00}f_{00,00}
=
L_{30,00}f_{31,30} + f_{01,00}L_{31,01}
\]
Here, $L_{31,10}$ and $L_{21,00}$ are defined analogously by counting monopoles over $1$-dimensional families.
Putting these together,
\[
f_{0,0}f_{3,0} + f_{3,0}f_{3,3} = 
f_{2,0}f_{3,2} + f_{1,0}f_{3,1} +
L_{3,0}.
\]
This identity amounts to the nine-term identities involving $2$-by-$2$ matrices in \cite[Section~8.0]{Bloom2011}.
The quasi-isomorphism assertion is a consequence of the quasi-isomorphism in the 'to'-version of exact triangle, as the diagonal entries of
\[
	\begin{bmatrix}
		L_{31,01} & 0\\
		L_{31,00} & L_{30,00}
	\end{bmatrix}
\]
are shown in \cite[Lemma~5.9]{ljk2023triangle} to be isomorphisms. 
\end{proof}
We have all the pieces to prove Theorem~\ref{thm:quasi_iso_1}.
\begin{proof}[Proof of Theorem~\ref{thm:quasi_iso_1}]
By Proposition~\ref{prop:triangle}, there is a chain of isomorphisms whose final target is the complex $(\tilde C(K_{\bftwo}),f_{\bftwo,\bftwo})$:
\[
	\bfC[\bfone,\bfzero] \rightarrow 
	\bfC[(1,\dots,1,2),(0,\dots,0,2)] \to 
	\bfC[(1,\dots,1,2,2),(0,\dots,0,2,2)] \to \cdots \to
	\bfC[\bftwo,\bftwo].
\]
Note that each arrow is a chain map, defined using differentials between the vertex $(2,\dots,2,2,0, \dots,1)$ and $(2,\dots,2,0,0,\dots,0)$.
\end{proof}
\subsection{Proof of Theorem~\ref{thm:SS_intro}}
To prove Theorem~\ref{thm:SS_intro}, recall 
\[
	\bfC = 
	\bigoplus_{v \in \{0,1\}^N} \widetilde{C}(K_v)
\]
with differential 
\[\bfF = \sum_{v \ge u} f_{v,u}: \bfC \to \bfC\] 
having components
\[f_{v,u}:\widetilde{C}(K_v) \to \widetilde{C}(K_u).
\]

Recall also from the beginning of the section that $D_v$ is the diagram of $K_v$ consisting of a disjoint union of circles.
Let $\overline{D}_v$ be the diagram of the $v$-resolution of the mirror link $K_v$.
Taking mirrors reverses the roles of $0$- and $1$-resolutions.
As a result, the Khovanov differential is now formed by maps $d_{v,v'}$ for edges $v >_1 v'$ in the cube, i.e.
\[
d_{v,v'}:\CKh(\overline{D}_v) \to \CKh(\overline{D}_{v'}).
\]

We study the $E_1$-page. 
By \cite[Prop.~14.3]{ljk2022}, $\widecheck{\HMR}_*(K_v)$ is a $\bbF_2[\upsilon_{\infty}]$ module over the cohomology of the Picard torus of $\Sigma_2(S^3,K_v)$.
The latter cohomology ring in turn can be identified with $\Lambda^*(H_1(\Sigma_2(S^3,K_v);\bbF_2))$.
Since $\widetilde{\HMR}_{\bullet}(K_v)$ can be identified with $\ker(\upsilon_{\infty})$ in $\widecheck{\HMR}_{\bullet}(K_v)$, the Floer homology $\widetilde{\HMR}_{\bullet}(K_v)$ is a rank-$1$ free module over $\Lambda^*V$.
In particular, 
\[\widetilde{\HMR}_{\bullet}(K_v) \cong \CKhr(\overline{D}_v).\]
The above isomorphisms can be made more explicit.
Fix $v \in \{0,1\}^N$ and let $x_o$ be the component of $\overline{D}_v$ that contains $\infty$.
For each circle $x \ne x_o$ in $D_v$, choose an arc $\eta_x$ in $S^3$ connecting the $x_o$- and $x$-components of $K_v$.
The lift of $\eta_x$ in the double branched cover represents a class in $H_1(\dbcv_2(S^3,K_v))$.
The assignment of $x \mapsto \eta_x$ provides an isomorphism
\[
V(\overline{D}_v)/(x_o) \to H_1(\dbcv_2(S^3,K_v);\bbF_2)
\]
and extends to 
\[
	\psi_v:\Lambda^*(V(\overline{D}_v)/(x_o))  \xrightarrow{\cong} \Lambda^*H_1(\dbcv_2(S^3,K_v);\bbF_2).
\]
To match the $E_1$-page with the reduced Khovanov complex, we need to prove the following diagram commutes for $v >_1 v'$:
\[
\begin{tikzcd}
\Lambda^* V(\overline{D}_v)/(x_o) \ar[r,"d_{v,v'}"] \ar[d,"\psi_v"] & \Lambda^* V(\overline{D}_{v'})/(x_o) \ar[d,"\psi_{v'}"]\\
H_1(\dbcv_2(S^3,K_v);\bbF_2) \ar[r,"f_{v,v'}"] & H_1(\dbcv_2(S^3,K_{v'});\bbF_2).
\end{tikzcd}
\]
Given $v > v'$,
The cobordism $\Sigma_{v,v'}$ is either the pair-of-pants cobordism that merges two circles in $D_v$, or the reverse cobordism that splits a circle into two.
The following computation of $f_{v,v'}$ is analogous to \cite[Cor.~9.2]{Bloom2011}.
\begin{prop}
	\label{prop:formula_pair_pants}
	Let $K$ be an $n$-component unlink.
	The homology $\widetilde{\HMR}_{\bullet}(K)$ is a rank-$1$ free module over the exterior algebra $\Lambda^*V$, where $V$ is an $(n-1)$-dimensional vector space.
	Let $\eta$ be an arc whose end points are on two components of $K$, such that the lift of $\eta$ in $\dbcv_2(S^3,K)\cong \#_n(S^1 \times S^2)$ generates the homology of one of the $S^1$-factor.
	Let $\Sigma_{merge}: K \to K'$ be the $2$-dimensional cobordism corresponding adding $1$-handle along $\eta$.
	Then $\widetilde{\HMR}_{\bullet}(K')$ is a rank-$1$ free module over $\Lambda^*V'$, where $V' \cong V/(\eta)$ is an $(n-2)$-dimensional moduli space.
	Denote by $\pi: V \to V'$ the corresponding projection. Then
	the cobordism map is given by, for any $\xi \in \Lambda^*(V)$,
	\[
	\widetilde{\HMR}(\Sigma_{merge})(\xi \cdot \Theta) = \pi(\xi) \cdot \Theta',
	\]
	where $\Theta,\Theta'$ are the respective generators of $\widetilde{\HMR}_{\bullet}(K)$ and $
	\widetilde{\HMR}_{\bullet}(K')$.
	Next,
	let $\Sigma_{split}:K \to K''$ be a split cobordism.
	There is an $n$-dimensional vector space $V''$ and an injection $i:V \to V''$.
	Moreover, there exists $x'' \in V''$ such that $V \cong V''/\langle x'' \rangle$ and $\Lambda^*V$ embeds into $ \Lambda^*V''$ via $\xi \mapsto \xi \wedge x''$.
	Then $\widetilde{\HMR}_{\bullet}(K'')$ is a rank-$1$ free module over $\Lambda^*V''$, and
	\[
	\widetilde{\HMR}(\Sigma_{split})(\xi \cdot \Theta) = (\xi \wedge x'') \cdot \Theta''.
	\]
\end{prop}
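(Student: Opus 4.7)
The plan is to reduce the computation to reducible Seiberg-Witten theory on the Picard tori of the relevant double branched covers, then read off the cobordism map from the topology of the double cover of the pair-of-pants.

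First I would identify $\widetilde{\HMR}_{\bullet}(K)$ explicitly for an $n$-component unlink $K$. The double branched cover $\dbcv_2(S^3,K)$ is diffeomorphic to $\#_{n-1}(S^1\times S^2)$, which carries an involution-invariant metric of positive scalar curvature (as in the $2$-bridge example). Hence there are no irreducible Seiberg-Witten solutions and $\check{C}(K)=C^s$. The unlink example in Section~\ref{subsec:euler_char} then gives
\[
\widecheck{\HMR}_{*}(K,p)\cong H_*(\bbT^{n-1};\bbF_2)\otimes \bbF_2[\upsilon,\upsilon^{-1}]/\bbF_2[\upsilon],
\]
and the $\upsilon$-mapping cone collapses to the base copy, yielding $\widetilde{\HMR}_{\bullet}(K)\cong H_*(\bbT^{n-1};\bbF_2)\cong \Lambda^*V$ as a free rank-one module, with generator $\Theta$ corresponding to the fundamental class of $\bbT^{n-1}$.

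Next I would pin down the topology of the double branched cover $\bbW_{merge}$ of $\Sigma_{merge}$. Since a $2$-dimensional $1$-handle attached along an arc $\eta$ lifts to a $4$-dimensional $2$-handle attached along the preimage circle $\tilde\eta\subset \dbcv_2(S^3,K)$, the cobordism $\bbW_{merge}$ is $[0,1]\times \#_{n-1}(S^1\times S^2)$ with a $2$-handle attached along $\tilde\eta$, and the outgoing end is $\#_{n-2}(S^1\times S^2)$. By hypothesis $[\tilde\eta]$ generates an $S^1$-factor, so $H_1$ of the outgoing end is exactly $V/(\eta)=V'$. On the level of Picard tori, this identifies the cobordism-induced map with the projection $\bbT^{n-1}\to \bbT^{n-2}$ dual to $V'\hookrightarrow V$. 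For $\Sigma_{split}$ one runs the same argument in reverse: $\bbW_{split}$ is a $4$-dimensional $1$-handle attachment adding an $S^1\times S^2$ summand, so the Picard torus map is an inclusion $\bbT^{n-1}\hookrightarrow \bbT^n$ whose cokernel is the circle generated by the new class $x''$.

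Finally, I would compute $\widetilde{\HMR}(\Sigma_{*})$ using the fact that every critical point on the two ends and every generic trajectory along the cobordism is reducible (positive scalar curvature is preserved along these handle attachments away from the attaching loci, and irreducible bubbling contributes nothing by dimension). Hence the chain-level map is determined entirely by its reducible part, which is essentially the map on Morse homology of $\bbT^{n-1}$ induced by the canonical projection (for $\Sigma_{merge}$) or the fiber-inclusion together with integration over the new $S^1$-factor (for $\Sigma_{split}$). Under the identifications $\widetilde{\HMR}_{\bullet}\cong \Lambda^*V$, projection induces $\xi\cdot\Theta\mapsto \pi(\xi)\cdot\Theta'$, while the inclusion-with-fiber induces $\xi\cdot\Theta\mapsto (\xi\wedge x'')\cdot\Theta''$, which is the claimed formula. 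A parallel argument, exactly as in~\cite[Cor.~9.2]{Bloom2011}, applies in the ``tilde'' setting because the $\upsilon$-map cones off the same tower uniformly across both ends, so the computation descends.

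The main obstacle will be the last step: verifying that the purely reducible cobordism map on the mapping-cone complex really agrees with the topological Picard-torus map, including the normalization that sends $\Theta$ to $\Theta'$ rather than to some shifted class. This requires checking that the spectral flow of the real Dirac operator along the $2$-handle attachment is trivial in the relevant \spinc structure, and that the family-of-metrics gluing used in Section~\ref{sec:top_skein_cob} does not introduce a correction term from the puncture $B_o$. The remaining bookkeeping---tracking $\Lambda^*$-module structures through the identifications $\psi_v$---is then formal.
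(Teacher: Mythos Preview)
Your approach is genuinely different from the paper's, and the obstacle you flag at the end is exactly what the paper's argument is designed to avoid.

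The paper does not attempt a direct reducible computation of the cobordism map. Instead it combines two soft facts. First, the $H_1$-module structure: since the class $x=[\tilde\eta]$ dies in $H_1$ of the merge cobordism, anything of the form $(x\wedge\xi)\cdot\Theta$ lies in the kernel of $\widetilde{\HMR}(\Sigma_{merge})$; dually, since $x''$ dies in the split cobordism, the image of $\widetilde{\HMR}(\Sigma_{split})$ lies in $x''\wedge\Lambda^*V''$. Second, the paper observes that $\Sigma_{merge}$ and $\Sigma_{split}$ are two of the three maps in a skein triangle in which the third link is isotopic to one of the first two, so the triangle degenerates to a short exact sequence by a rank count. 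This forces $\widetilde{\HMR}(\Sigma_{merge})$ to be surjective and $\widetilde{\HMR}(\Sigma_{split})$ to be injective. Surjectivity plus the identified kernel pins down the merge map completely (and fixes the normalization $\Theta\mapsto\Theta'$ for free); injectivity plus the identified image pins down the split map.

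Your direct route via Morse theory on Picard tori could in principle work, but the step you call ``the main obstacle'' is a real one: the reducible-to-reducible part of a monopole cobordism map is not literally the pushforward on torus homology, and the tower height depends on the index of the real Dirac operator on $\bbW$. You would need an honest computation of that index (or an excision/gluing argument) to get the normalization, whereas the paper's exact-triangle trick gives it automatically. Also, your appeal to positive scalar curvature on the $4$-dimensional handlebody is not justified as stated; PSC on the ends does not give PSC on the cobordism, and you would need a separate argument to rule out irreducibles there.
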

\begin{proof}
The first two statements were explained in the preceding discussion.
In parictular, $V = H_1(\Sigma_2(S^3,K))$. 
Let $x \in V$ be the homology class representing $\eta$ in the double branched cover.
Then $x = 0$ in $H_1(\dbcv_2([0,1] \times S^3,\Sigma_{merge}))$.
Since cobordism maps respects the $H_1$-action, $(x \wedge \xi) \cdot \Theta$ is in the kernel of $\widetilde{\HMR}(\Sigma_{merge})$.
On the other hand, $\Sigma_{merge}$ can be viewed as a skein cobordism $\Sigma_{1,0}:K_1 \to K_0$ as in Figure~\ref{fig:merge_skein_cob}.
\begin{figure}
\begin{center}
\tikzset{every picture/.style={line width=0.5pt}} 
\begin{tikzpicture}[x=0.5pt,y=0.5pt,yscale=-1,xscale=1]
	\draw    (63.33,14.43) -- (143.17,86.08) ;
	\draw    (101.11,59.11) -- (71.09,84.96) ;
	\draw    (306.18,16.67) .. controls (372.71,45.77) and (332.79,88.32) .. (307.29,96.15) ;
	\draw    (400.43,14.43) .. controls (339.17,34.17) and (367.17,86.67) .. (400.43,97.27) ;
	\draw    (541.26,14.43) .. controls (583.4,40.18) and (572.67,54.17) .. (629.97,15.55) ;
	\draw    (549.02,93.91) .. controls (580.17,79.17) and (608.9,58.09) .. (641.06,90.55) ;
	\draw    (63.33,14.43) .. controls (24,-14.83) and (17,132.17) .. (71.09,84.96) ;
	\draw    (147.78,15.11) .. controls (200.83,6.03) and (191.78,121.78) .. (143.17,86.08) ;
	\draw    (306.18,16.67) .. controls (251.84,-2.37) and (247.41,115.18) .. (307.29,96.15) ;
	\draw    (400.43,14.43) .. controls (422.38,6.87) and (436.7,26.73) .. (440.36,49.13) .. controls (444.84,76.53) and (433.38,107.74) .. (400.43,97.27) ;
	\draw    (541.26,14.43) .. controls (486.93,-4.61) and (489.14,112.95) .. (549.02,93.91) ;
	\draw    (629.97,15.55) .. controls (646.17,8.17) and (671.78,22.25) .. (675.44,44.65) .. controls (679.11,67.06) and (664.35,109.59) .. (641.06,90.55) ;
	\draw    (112,47) .. controls (122.44,33.78) and (131.11,17.78) .. (147.78,15.11) ;
	
	\draw (90.53,113.57) node [anchor=north west][inner sep=0.75pt]    {$K_{2}$};
	\draw (340.06,113.93) node [anchor=north west][inner sep=0.75pt]    {$K_{1}$};
	\draw (594.01,113.06) node [anchor=north west][inner sep=0.75pt]    {$K_{0}$};
\end{tikzpicture}
\end{center}
\caption{A merge cobordism $\Sigma_{merge}$ as a skein cobordism $\Sigma_{1,0}$.}
\label{fig:merge_skein_cob}
\end{figure}
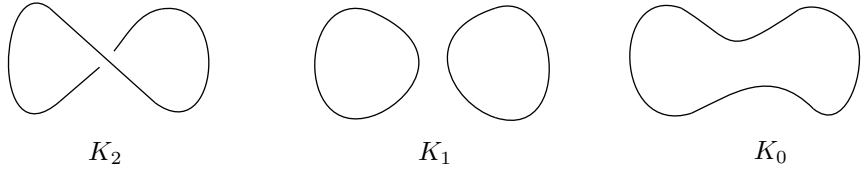
Since $K_2$ is isotopic to $K_0$ the triangle splits for rank reason:
\[
\begin{tikzcd}
	0 \ar[r] & \widetilde{\HMR}_{\bullet}(K_2) \ar[r] &\widetilde{\HMR}_{\bullet}(K_1) \ar[rr,"\widetilde{\HMR}(\Sigma_{merge})"]
	& {} &\widetilde{\HMR}_{\bullet}(K_0) \ar[r] &
	0.
\end{tikzcd}
\]
In particular, $\widetilde{\HMR}(\Sigma_{merge})= \widetilde{\HMR}(\Sigma_{1,0})$ is surjective.
Similarly, $\widetilde{\HMR}(\Sigma_{split}) = \widetilde{\HMR}(\Sigma_{2,1}) $ is injective.

Let $V'' = H_1(\Sigma_2(S^3,K''))$ and $x'' \in V''$ be represented by the arc joining the two output circles in $K''$ from splitting.
In particular, there is an injection $i:V \to V''$ and $V''/\langle x''\rangle \cong V$.
Since $x''$ is null-homologous in $\Sigma_2(I \times S^3,\Sigma_{split})$, 
\[
	x'' \cdot \widetilde{\HMR}(\Sigma_{split})(\xi\cdot \Theta) =\widetilde{\HMR}(\Sigma_{split})((0 \wedge \xi)\cdot \Theta) = 0.
\]
So the image of $\widetilde{\HMR}(\Sigma_{split})$ lies in $x''\wedge \widetilde{\HMR}_{\bullet}(K'')$.
\end{proof}
It follows from Propositon~\ref{prop:formula_pair_pants} that the component $f_{v,v'}$ of the differential $\bfF_1$ on the $E_1$-page can be identified with 
\[
d_{u,v}: \CKhr(\overline{D}_u) \to \CKhr(\overline{D}_v)
\]
via the isomorphisms $(\psi_v: v \in \{0,1\}^N)$.
In effect, we have proven that the following fact.
\begin{prop}
	\label{prop:E1diso}
	There is an isomorphism of chain complexes from $(\CKhr(\overline{D}),d)$ to $(E_1(\bfC(K)),\bfF_1)$. 
	In other words, $E_2 \cong \Khr(\overline{K})$. \hfill \qedsymbol
\end{prop}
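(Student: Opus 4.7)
The plan is to assemble the ingredients already laid out in the preceding discussion and to verify that the square
\[
\begin{tikzcd}
\Lambda^* V(\overline{D}_v)/(x_o) \ar[r,"d_{v,v'}"] \ar[d,"\psi_v"] & \Lambda^* V(\overline{D}_{v'})/(x_o) \ar[d,"\psi_{v'}"]\\
\widetilde{\HMR}_{\bullet}(K_v) \ar[r,"f_{v,v'}"] & \widetilde{\HMR}_{\bullet}(K_{v'})
\end{tikzcd}
\]
commutes for every edge $v >_1 v'$ of the cube $\{0,1\}^N$. Given commutativity, taking the direct sum over all vertices and edges produces an isomorphism $\CKhr(\overline{D}) \cong (E_1(\bfC(K)),\bfF_1)$ of chain complexes, and passage to homology yields $E_2 \cong \Khr(\overline{K})$.

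First I would fix the vertex identifications. For each $v$, the isomorphism $\psi_v$ intertwines the $\Lambda^*V(\overline{D}_v)/(x_o)$-module structure with the $\Lambda^*H_1(\dbcv_2(S^3,K_v);\bbF_2)$-module structure on $\widetilde{\HMR}_{\bullet}(K_v)$: indeed $\widetilde{\HMR}_{\bullet}(K_v)$ is a rank-$1$ free $\Lambda^*H_1$-module (by the discussion preceding the proposition), and $\psi_v$ is defined precisely by lifting each non-based circle $x$ of $\overline{D}_v$ to the class of an arc $\eta_x$ joining $x_o$ to $x$ in the double branched cover.

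Next I would treat the edges. For $v >_1 v'$, the skein cobordism $\Sigma_{v,v'}$ is either a merge or a split of the circle diagrams (after taking mirrors, the convention aligns with the description in Section~\ref{subsec:odd_Kh}). Proposition~\ref{prop:formula_pair_pants} then computes $f_{v,v'}=\widetilde{\HMR}(\Sigma_{v,v'})$ explicitly on a free generator $\Theta_v$: in the merge case it is the quotient by the arc $\eta$ connecting the two merging circles, and in the split case it is the wedge with the new arc $\eta''$ connecting the two output circles. Under $\psi_v$, the arc $\eta$ corresponds precisely to $x+y$ in $V(\overline{D}_v)/(x_o)$, so the merge map $\Lambda^*V(\overline{D}_v)/(x_o) \to \Lambda^* (V(\overline{D}_v)/(x_o))/(x+y)$ from Section~\ref{subsec:odd_Kh} matches quotienting by $\eta$; analogously for the split map and wedging with $x''$. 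This gives commutativity of the square for each edge.

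The main obstacle is the bookkeeping of the basepoint. One must check that the based circle $x_o$, which is quotiented out in the reduced Khovanov complex, really is the one containing $\infty$ in every resolution, and that the $\upsilon_\infty$-kernel description of $\widetilde{\HMR}_{\bullet}(K_v)$ as a rank-$1$ free $\Lambda^*H_1$-module is compatible with the merge/split formulas of Proposition~\ref{prop:formula_pair_pants} across all edges, especially when the merging or splitting involves $x_o$ itself (in which case both $d_{v,v'}$ and $f_{v,v'}$ must be seen to vanish on the based part). Once these cases are handled, assembling the commuting squares and taking homology completes the proof.
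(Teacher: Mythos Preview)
Your proposal is correct and follows essentially the same approach as the paper: the proposition is stated with a \qedsymbol because its proof is precisely the preceding discussion, namely the vertex identifications via $\psi_v$ together with the edge computation of Proposition~\ref{prop:formula_pair_pants}, which you have faithfully unpacked. Your last paragraph slightly mis-frames the basepoint issue (when the merge or split involves $x_o$, the maps do not ``vanish on the based part'' but simply descend compatibly to the quotients), though this is a minor matter of phrasing rather than a gap.
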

The final component of Theorem~\ref{thm:SS_intro} is the invariance of the complex $\bfC(D)$ with respect to link isotopy and the auxiliary data.
To this end, recall that $\HMR^{\circ}_{\bullet}(K_v)$ depends on the choice of an $\upiota$-invariant metric, real spin\textsuperscript{c} structures, and regular perturbations.
Denote a choice of such data by $\frakd$ and denote the corresponding complex by $\bfC^{\frakd}(D)$. 
\begin{prop}
Given two $D,D'$ based diagrams of $(K,p)$, and two set of auxiliary data of $\frakd,\frakd'$.
Then there is a filtered (with respect to vertex weight) chain map $\phi:\bfC^{\frakd}(D) \to \bfC^{\frakd'}(D')$ inducing isomorphism on the $E^i$ page for all $i \ge 2$.
\end{prop}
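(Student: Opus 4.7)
The plan is to decompose the assertion into two moves: (a) change of auxiliary data $\frakd \to \frakd'$ with the diagram $D$ held fixed, and (b) change of based diagram $D \to D'$ through a single Reidemeister move supported in a disc disjoint from $p$, with auxiliary data chosen freely. Since any two based diagrams of $(K,p)$ are related by a finite sequence of such moves, a composition argument reduces the proposition to these two cases.

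For case (a), choose a generic $1$-parameter family $\{\frakd_t\}_{t\in[0,1]}$ of invariant metrics and perturbations connecting $\frakd$ and $\frakd'$. For each pair $v \ge u$ in $\{0,1\}^N$, I would count the zero-dimensional components of the parametrized real Seiberg-Witten moduli spaces on $(\bbW^o_{v,u},\Sigma^o_{v,u})$ equipped with the metric family $P_{v,u}$ from Section~\textbf{M1} together with the continuation path $\{\frakd_t\}$. The resulting entries $\phi_{v,u}\colon \widetilde C^\frakd(K_v)\to \widetilde C^{\frakd'}(K_u)$ assemble into a filtered map $\phi=\sum_{v\ge u}\phi_{v,u}$, and a codimension-one degeneration argument identical to the proofs of Lemma~\ref{lem:10_complex} and Lemma~\ref{lem:20_complex} shows $\phi$ is a chain map. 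The diagonal entries $\phi_{v,v}$ are the standard continuation quasi-isomorphisms for $\widetilde C(K_v)$, so $\phi$ induces an isomorphism on the $E_1$ page and hence on all $E_i$ with $i\ge 1$.

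For case (b), let $R\colon D\to D'$ be a single Reidemeister move supported in a small disc $B\subset S^2\setminus\{p\}$ involving $r$ crossings. I would form the enlarged cube of resolutions $\bfC^\frakd(D\cup R)$ indexed by $\{0,1\}^{N+r}$, obtained by treating every crossing in the Reidemeister region as an additional resolvable crossing. The ``all-$0$'' and ``all-$1$'' subcubes recover, up to a change-of-auxiliary-data map handled by (a), the complexes $\bfC^\frakd(D)$ and $\bfC^{\frakd'}(D')$ respectively. The collection of downward edges connecting these subcubes defines, by the same moduli-space construction used to build $\bfF$, a filtered chain map $\phi\colon \bfC(D)\to \bfC(D')$. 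To check that $\phi$ is a quasi-isomorphism on all pages $E_i$ with $i\ge 2$, it suffices to verify the induced map on $E_2$ is an isomorphism. Using Proposition~\ref{prop:E1diso} together with the identifications $\psi_v$, the $E_2$ map coincides, vertex-by-vertex, with the Khovanov merge/split morphism assembled for the Reidemeister tangle---and invariance of reduced Khovanov homology then produces the required isomorphism.

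The main obstacle is case (b): matching the $E_2$ map induced by cobordism maps in $\HMR$ with the corresponding Bar-Natan cobordism formula for $\Khr$ over every Reidemeister region (R1, R2, R3), coherently across the cube. One route is a direct moduli-theoretic calculation, using Proposition~\ref{prop:formula_pair_pants} on each elementary merge and split; an alternative, arguably more efficient, is to verify that $\bfC^\frakd(D)$ together with its filtration satisfies the axioms of a \emph{Khovanov--Floer theory} in the sense of Baldwin--Hedden--Lobb~\cite{BHL2019}, in which case the Reidemeister invariance of every page $E_i$ with $i\ge 2$, and the filtered quasi-isomorphism $\phi$, are provided by that framework. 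In view of the remarks in the introduction, I expect the latter route to be the cleanest, with the combinatorial verification of the axioms being the principal remaining technical task.
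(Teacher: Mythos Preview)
Your treatment of case~(a) is correct and matches the paper's approach in spirit; the paper phrases it (following Bloom) as a doubled cube over $\{0,1\}^{N+1}$ whose two $N$-dimensional faces carry the data $\frakd$ and $\frakd'$, with the off-face components of the enlarged differential furnishing $\phi$, but this is equivalent to your continuation construction.

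Case~(b), however, contains a structural error. In a Reidemeister move the diagrams $D$ and $D'$ have \emph{different} numbers of crossings (except for R3), so the cube $\{0,1\}^{N+r}$ you describe is precisely the cube of resolutions of the \emph{larger} diagram, say $D'$: the complex $\bfC(D')$ is the \emph{entire} enlarged cube, not a face of it. One face (the $r$ Reidemeister coordinates set to the value that undoes the move) is indeed $\bfC(D)$, but the opposite face is not $\bfC(D')$ --- it is $\bfC$ of $D$ together with extra unlinked circles, or some other local resolution. Consequently there are no ``downward edges between the all-$0$ and all-$1$ subcubes'' producing a map $\bfC(D)\to\bfC(D')$ in the way you describe; the relevant map is rather the inclusion of the face $\bfC(D)$ into the whole cube $\bfC(D')$ (or the dual projection).

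The paper handles this by citing Baldwin~\cite{Baldwin2011}, who constructs exactly this face-inclusion (or projection) for each Reidemeister type and proves it is a filtered quasi-isomorphism via a purely combinatorial cancellation argument at the $E_1$ level --- the same argument that establishes Reidemeister invariance of Khovanov homology --- so that no additional Floer-theoretic computation is needed. Once that argument gives an isomorphism on $E_1$ (not merely $E_2$), the bounded-filtration comparison gives all higher pages. Your Khovanov--Floer-theory alternative is viable in principle, but as you acknowledge, verifying the axioms is a separate task; the paper records this only as an expectation, not as the proof.
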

\begin{proof}
The proof when $D = D'$ is a straightforward adaptation of \cite[Theorem~7.4]{Bloom2011}.
We outline it below.
Consider the family of $3$-manifolds $\{Y_v: v \in \{0,1\}^{N+1}\}$ in Definition-Proposition~\ref{prop:hypsurfaces1}.
Double this family to $\{Y_{v*} : v \in \{0,1\}^{N+2}\}$ by taking slight left translates.
We will produce families  of metrics $\{\underline{P_{v,u}}:u \ge v\}$ to set up the following complex
\[
	\underline{\bfC}=
	\bigoplus_{v*\in \{0,1\}^{N+1}} \widetilde{C}(K_v),
\]
equipped with a differential $\underline{\bfF}$ formed by components $\underline{f}_{v*,u*'}:\widetilde{C}(K_v) \to \widetilde{C}(K_u)$.
Choose metrics over $\underline{P_{v,u}}$ and perturbations so that $\bfC^{\frakd}(D)$ can be viewed as a complex over $\{0,1\}^N \times \{1\}$, obtained as the quotient complex of $\underline{\bfC}$ by the subcomplex $\bfC^{\frakd'}(D')$ over $\{0,1\}^N \times \{0\}$.
Moreover, one can show $\underline{\bfF}^2 = 0$ and the isomorphism in the statement is given by
\[
\phi = \sum_{v \ge u} \underline{f}_{v1,u0}.
\]
For the case $D \ne D'$, it suffices to assume $D$ and $D'$ are related by a single Reidemeister move.
The map $\phi$ in the Heegaard-Floer case is defined by Baldwin~\cite{Baldwin2011}. 
His construction applies readily to both the ordinary monopole Floer homology of the double cover and our setting.
Moreover, his proof of isomorphism properties of $\phi$ does not involve analysis of Floer theories, and hence works verbatim for $\widetilde{\HMR}$.
\end{proof}

\section*{Acknowledgement}
The author would like to thank Ciprian Manolescu for email correspondence which motivated this work, and  Masaki Taniguchi for several comments on the draft.
The author is also grateful to his advisor Peter Kronheimer for many helpful discussions during this project.

\bibliographystyle{alpha}
\bibliography{hmr_tilde.bib}
\Addresses

\end{document}